\newcommand{\G}{\mathcal{G}}
\newcommand{\NP}{\textsf{NP}}
\newcommand{\tw}{\mathsf{tw}}
\newcommand{\fvs}{\mathsf{fvs}}
\newcommand{\cmsotwo}{$\mathsf{CMSO}_2$\xspace}
\newtheorem{theorem}{Theorem}[section] 
\newtheorem{lemma}[theorem]{Lemma}
\newtheorem{proposition}[theorem]{Proposition}
\newtheorem{corollary}[theorem]{Corollary} 
\newtheorem{question}[theorem]{Question} 
\newtheorem{observation}{Observation}
\Crefname{question}{Question}{Questions}
\title{A tame vs.~feral dichotomy for graph classes excluding an induced minor or induced topological minor}\author{Martin Milani\v{c}\thanks{FAMNIT and IAM, University of Primorska, Koper, Slovenia. Email: \texttt{martin.milanic@upr.si}.}
 \and Nevena Piva\v{c}\thanks{FAMNIT and IAM, University of Primorska, Koper, Slovenia. Email: \texttt{nevena.pivac@iam.upr.si}.} 
}
\date{}
\begin{document}

\maketitle

\begin{abstract}
A minimal separator in a graph is an inclusion-minimal set of vertices that separates some fixed pair of nonadjacent vertices. 
A graph class is said to be tame if there exists a polynomial upper bound for the number of minimal separators of every graph in the class, and feral if it contains arbitrarily large graphs with exponentially many minimal separators.
Building on recent works of Gartland and Lokshtanov [SODA 2023] and Gajarsk{\'y}, Jaffke, Lima, Novotn{\'a}, Pilipczuk, Rz\k{a}\.zewski, and Souza [arXiv, 2022], we show that every graph class defined by a single forbidden induced minor or induced topological minor is either tame or feral, and classify the two cases.
This leads to new graph classes in which \textsc{Maximum Weight Independent Set} and many other problems are solvable in polynomial time.
We complement the classification results with polynomial-time recognition algorithms for the maximal tame graph classes appearing in the obtained classifications.

\bigskip
\noindent{\bf Keywords:} minimal separator, tame graph class, induced minor, induced subdivision, induced topological minor, polynomial-time algorithm 

\bigskip
\noindent{\bf MSC (2020):}  
05C69, 
05C75, 
05C40, 
05C30, 
05C85, 
68R10. 
\end{abstract}

\section{Introduction}

\subsection{Background and motivations}

A \emph{minimal separator} in a (finite, simple, and undirected) graph $G$ is an inclusion-minimal set of vertices separating a fixed pair of nonadjacent vertices.
Minimal separators are an important tool in structural and algorithmic graph theory.
They are relevant in the proof of the fact that every chordal graph has a simplicial vertex~\cite{D61}, appear in a related result about the existence of moplexes in arbitrary graphs~\cite{moplex}, and are closely related to minimal triangulations, which have applications to sparse matrix computations~\cite{MinTriangulations}.
Several graph algorithms, in particular for problems related to independent sets and treewidth, are based on minimal separators and the closely related notion of potential maximal cliques (see, e.g.,~\cite{MR3763653,MR3769285,MR3909546,MR3943605,TWFillIn,MR3917975,MR3311877,MR4567533,DBLP:conf/sea2/Tamaki19}).
Minimal separators have also been studied in the recent literature from various other points of view (see, e.g.,~\cite{MR4020560,MR4456176,MR4198013,MR4245262,MR4249457,AbrishamiCDTTV22}).

Some of the most significant algorithmic applications of minimal separators are related to families of graph classes with a polynomially bounded number of minimal separators.
A graph class $\G$ is said to be \emph{tame} if there exists a polynomial $p$ such that for every graph $G\in \G$ the number of minimal separators of $G$ is bounded by $p(|V(G)|).$ 
Many graph problems that are \NP-hard in general can be solved in polynomial time when restricted to tame graph classes. 
This includes \textsc{Treewidth} and \textsc{Minimum Fill-In}, as shown by Bouchitt\'e and Todinca~\cite{TWFillIn,MR1896345}, as well as \textsc{Maximum Independent Set} and \textsc{Feedback Vertex Set}, as shown by Fomin and Villanger~\cite{DBLP:conf/stacs/FominV10}.
The latter result was significantly generalized by Fomin, Todinca, and Villanger~\cite{MR3311877} who proved the following result.
We denote by $\tw(G)$ the treewidth of a graph $G$.
For a fixed integer $t\ge 0$ and fixed \cmsotwo formula $\phi$,\footnote{In \cmsotwo logic one can use vertex, edge, and (vertex or edge) set variables, 
check vertex-edge incidence, quantify over variables,
and apply counting predicates modulo fixed integers.} consider the following computational problem, $(t,\phi)$-\textsc{Maximum Weight Induced Subgraph}:
Given a graph $G$ equipped with a vertex weight function $w\colon V(G) \to \mathbb{Q}_+$, find a set $X \subseteq V(G)$ of maximum possible weight such that $G[X] \models \phi$ and $\tw(G[X]) \leq t$, or conclude that no such set exists.

\begin{theorem}[Fomin, Todinca, and Villanger~\cite{MR3311877}]\label{thm:FTV}
For fixed integer $t\ge 0$, fixed \cmsotwo formula $\phi$, and any tame graph class $\G$, the $(t,\phi)$-\textsc{Maximum Weight Induced Subgraph} problem is solvable in polynomial time for graphs in $\G$.
\end{theorem}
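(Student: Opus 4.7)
The plan is to adapt the potential maximal cliques (PMC) framework of Bouchitt\'e and Todinca. Recall that a PMC in $G$ is a vertex set appearing as a maximal clique in some minimal triangulation of $G$; Bouchitt\'e and Todinca's bound $|\Pi(G)| = O(n\cdot|\Delta(G)|^2)$ says the number of PMCs is polynomial in $n$ and in the number $|\Delta(G)|$ of minimal separators. Hence tameness of $\G$ lets me enumerate both collections in polynomial time via the standard one-more-vertex procedure, and this is the only place where the hypothesis on $\G$ enters.

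Next I would invoke the classical correspondence between minimal triangulations of $G$ and tree decompositions of $G$ whose bags are PMCs and whose adhesions are minimal separators. The structural claim to establish is an \emph{alignment lemma}: whenever $G[X]$ has treewidth at most $t$, there exist a rooted tree over PMCs of $G$ and an assignment $\Omega\mapsto X_\Omega\subseteq\Omega$, compatible with the separator structure, such that $\bigcup_\Omega X_\Omega=X$ and the induced decomposition of $G[X]$ has width at most $t$. This reduces the problem to a dynamic program indexed by triples $(S,\Omega,C)$ with $S$ a minimal separator, $\Omega\supseteq S$ a PMC, and $C$ the full component of $G\setminus S$ meeting $\Omega\setminus S$. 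Beyond the triple, each DP state records (i) the trace $X\cap S\subseteq S$ of size at most $t+1$, and (ii) the Myhill--Nerode $\phi$-type of the partial solution on the processed side of $S$. By the algorithmic form of Courcelle's theorem for graphs of treewidth at most $t$, the set of relevant types has size bounded by a constant $c(t,\phi)$, and types compose under gluing via a fixed computable transition function. The DP stores the maximum weight over each (triple, trace, type) and combines children through the PMC at their common parent; the final answer is read off from states that can serve as the root.

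The hardest point, in my view, is the alignment lemma: one must argue that an optimum solution's tree decomposition of width at most $t$ can be \emph{refined} to respect the PMC structure of $G$ itself, so that polynomially many PMC-indexed triples suffice to capture every optimum. The standard route is to extend a chosen width-$t$ decomposition of $G[X]$ to a minimal triangulation of $G$ and to track how PMCs of $G$ project onto bags of $G[X]$; the delicate part is that the projection can be much larger than $t+1$, so one must carefully factor out the "irrelevant" vertices of each PMC via the interface trace $X\cap S$. Once this is in place, the remaining ingredients are routine bookkeeping: interface subsets per separator cost only $n^{O(t)}$ to enumerate, types are finitely many, transitions are constant-time per PMC-child pair, and the polynomial bound on the number of minimal separators propagates through all counts, yielding polynomial running time overall.
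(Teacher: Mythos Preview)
The paper does not give its own proof of this statement: \Cref{thm:FTV} is quoted verbatim as a result of Fomin, Todinca, and Villanger~\cite{MR3311877} and is used as a black box throughout. There is therefore nothing in the present paper to compare your proposal against.

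That said, your outline is a faithful high-level summary of the actual argument in~\cite{MR3311877}: enumerate minimal separators and potential maximal cliques (polynomially many by tameness and the Bouchitt\'e--Todinca bound), and run a dynamic program over PMC-indexed triples whose states carry a bounded-size interface and a finite \cmsotwo type. Your identification of the ``alignment lemma'' as the crux is accurate; in~\cite{MR3311877} this is handled by showing that for any $X$ with $\tw(G[X])\le t$ there is a minimal triangulation $H$ of $G$ in which $X$ still induces a subgraph of treewidth at most $t$ (their key structural lemma), which then lets the DP range over PMCs of $G$ while only tracking $X\cap\Omega$ of size at most $t+1$. If you were to flesh this out, that lemma is where the real work lies; the rest is, as you say, bookkeeping.
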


The algorithm given by \Cref{thm:FTV} is \emph{robust} in the sense of Raghavan and Spinrad~\cite{MR2006100}: the algorithm works on any graph $G$ and correctly solves the problem whenever the number of minimal separators is bounded by a polynomial (which is, in particular, the case when $G\in \G$); otherwise, the algorithm correctly reports that the given graph does not belong to~$\G$.
Examples of problems captured by the above framework include \textsc{Maximum Weight Independent Set}, \textsc{Maximum Weight Induced Matching}, \textsc{Maximum Weight Induced Forest} (which is equivalent, by complementation, to \textsc{Minimum Weight Feedback Vertex Set}), \textsc{Longest Induced Path}, and many others (see~\cite{MR3311877}).

The above algorithmic results motivate the quest of identifying tame graph classes and 
classifying them when restricted to particular families of graph classes.
Known examples of tame graph classes include chordal graphs~\cite{D61}, and their generalization weakly chordal graphs~\cite{TWFillIn}, cocomparability graphs of bounded interval dimension~\cite{MR1723686}, circle graphs~\cite{kloks1996treewidth}, polygon circle graphs~\cite{Suchan}, distance-hereditary graphs~\cite{DBLP:conf/esa/KloksBMK93}, probe interval graphs~\cite{DBLP:conf/stacs/ChangKLP05}, AT-free co-AT-free graphs~\cite{EkkiKohler},
$2P_2$-free graphs~\cite{MilanicP21},
$P_4$-sparse graphs~\cite{MR2204116}, extended $P_4$-laden graphs~\cite{MR2971360}, $($theta, pyramid, prism, turtle$)$-free graphs~\cite{AbrishamiCDTTV22}, graphs with minimal separators of bounded size~\cite{DBLP:conf/wg/Skodinis99}, and intersection graphs of connected subgraphs of some subdivision of some fixed graph (also known as $H$-graphs)~\cite{MR4141534}.

In contrast, relatively few classification results regarding the tameness property when restricted to particular families of graph classes are available in the literature.
The first such systematic study was done by Milanič and Pivač~\cite{MilanicP21} who gave a dichotomy characterizing the tame graph classes within the family of hereditary graph classes defined by minimal forbidden subgraphs on at most four vertices.\footnote{A graph class is \emph{hereditary} if it is closed under vertex deletion.}
Every graph class $\G$ in this family that is not tame was shown to be \emph{feral}, that is, there exists a constant $c>1$ such that for infinitely many graphs $G\in \G$ it holds that $G$ has at least $c^{|V(G)|}$ minimal separators.\footnote{While for any tame graph class it clearly holds that it is not feral, the opposite is in general not true: there are graph classes that are neither feral nor tame, see~\cite{gajarsky2022taming}.}
The result of Milanič and Pivač was substantially generalized by Gajarsk{\'y}~et~al.~\cite{gajarsky2022taming}.
Building on previous work of Gartland and Lokshtanov~\cite{GartlandL23}, they obtained a full dichotomy of hereditary graph classes defined by a finite set of forbidden induced subgraphs into tame and feral.

The above results are restricted to graph classes that are closed under induced subgraphs.
However, outside the realm of tame graph classes, many other graph inclusion relations---for example, the minor, topological minor, subgraph, induced minor, and induced topological minor relations---have been studied in the literature and have proved important in various contexts.
In particular, dichotomy results for various properties were developed for graph classes defined by a single excluded graph with respect to one of the above relations.
Such properties include bounded clique-width~\cite{DBLP:journals/algorithmica/BelmonteOS18},  well-quasi ordering~\cite{MR1185012,MR3906632}, equivalence of bounded treewidth and bounded clique number~\cite{MR4334541}, bounded tree-independence number~\cite{DALLARD2024thirdpaper}, and polynomial-time solvability of \textsc{Graph Isomorphism}~\cite{DBLP:conf/isaac/OtachiS13}.

Motivated by this state of the art, we focus in this paper on the two remaining ``induced'' relations, the induced minor and induced topological minor relation.
A graph $H$ is said to be an \emph{induced minor} of a graph $G$ if $H$ can be obtained from $G$ by a sequence of vertex deletions and edge contractions.
A particular case of an induced minor of $G$ is an \emph{induced topological minor} of $G$, that is, a graph $H$ such that some subdivision of $H$ is an induced subgraph of $G$, in which case we also say that $G$ contains an \emph{induced subdivision of $H$}. 
These relations are more challenging to work with than the induced subgraph relation.
For example, there exist graphs $H$ such that detecting $H$ as an induced minor or an induced topological minor is \NP-complete. 
For the induced minor relation, this was shown by Fellows, Kratochv\'{\i}l, Middendorf, and Pfeiffer in 1995~\cite{MR1308575}; recently, Korhonen and Lokshtanov showed that this can happen even if $H$ is a tree~\cite{DBLP:conf/soda/KorhonenL24}.
For the induced topological minor relation, L{\'{e}}v{\^{e}}que, Lin, Maffray and Trotignon~\cite{DBLP:journals/dam/LevequeLMT09}  and Maffray, Trotignon and Vušković~\cite{zbMATH05529218} 
showed that the problem is \NP-complete if $H$ is the complete graph $K_5$ or the complete bipartite graph $K_{2,4}$, respectively. 
In general, the following two questions are widely open.

\begin{question}\label{que:recognition-H-induced-minor-free}
For which graphs $H$ there exists a polynomial-time algorithm for determining if a~given graph $G$ contains $H$ as an induced minor?
\end{question}

\begin{question}\label{que:recognition-H-induced-topological-minor-free}
For which graphs $H$ there exists a polynomial-time algorithm for determining if a~given graph $G$ contains $H$ as an induced topological minor?
\end{question}

In both cases, the problem is solvable in polynomial time if every component of $H$ is a path, since in this case it suffices to check if $H$ is present as an induced subgraph.
Furthermore, if $H$ is a graph with at most four vertices, then determining if a given graph $G$ contains $H$ as an induced minor can be done in polynomial time (see~\cite{dallard2024detecting,hartinger2017new}).
This is also the case if $H$ is the complete bipartite graph $K_{2,3}$ (see~\cite{dallard2024detecting}).
Also for the induced topological minor relation, only few polynomial cases are known (see~\cite{MR3891933,DBLP:journals/jct/ChudnovskyST13,MR3891933}).

Graph classes excluding a fixed planar graph $H$ as an induced minor are also relevant for the complexity of
\textsc{Maximum Weight Independent Set} (MWIS): Given a graph $G$ and a vertex weight function $w:V(G)\to \mathbb{Q}_+$, compute an independent set $I$ in $G$ maximizing its weight $\sum_{x\in I}w(x)$.
The problem of determining the computational complexity of MWIS in particular graph classes has been extensively studied.
In particular, the problem is known to be \NP-hard in the class of planar graphs (see~\cite{MR411240}), which implies that MWIS remains \NP-hard in graphs classes defined by a single forbidden induced minor $H$ when $H$ is nonplanar.
For the case when a planar graph $H$ is forbidden as an induced minor, Dallard, Milani{\v{c}}, and {\v{S}}torgel posed the following  question in~\cite{DALLARD2024thirdpaper}.

\begin{question}\label{que:MWIS-poly-for-planar-H}
Is MWIS solvable in polynomial time in the class of $H$-induced-minor-free graphs for every planar graph $H$?
\end{question}

This question is still open, even for the cases when $H$ is the path $P_7$ or the cycle $C_6$, but some partial results are known.
\Cref{que:MWIS-poly-for-planar-H} has an affirmative answer for all of the following graphs~$H$:  the path $P_6$ (as shown by Grzesik et al.~\cite{MR3909546}), the cycle $C_5$ (as shown by Abrishami et al.~\cite{DBLP:conf/soda/AbrishamiCPRS21}), the complete bipartite graph $K_{2,t}$ for any positive integer $t$, as well as graphs obtained from the complete graph $K_5$ by deleting either one edge or two disjoint edges (as shown by Dallard, Milani{\v{c}}, and {\v{S}}torgel~\cite{DALLARD2024thirdpaper}), and the $t$-friendship graph (that is, $t$ disjoint edges plus a vertex fully adjacent to them; as shown by Bonnet et al.~\cite{DBLP:conf/esa/BonnetDGTW23}\footnote{The paper~\cite{DBLP:conf/esa/BonnetDGTW23} solves the unweighted version of the problem, however, the methods can be  easily extended to the weighted case.}).
Quasi-polynomial-time algorithms are also known for the cases when $H$ is either a path (as shown by Gartland and Lokshtanov~\cite{MR4232071} and Pilipczuk et al.~\cite{DBLP:conf/sosa/PilipczukPR21}) or, more generally, a cycle (as shown by Gartland et al.~\cite{10.1145/3406325.3451034}), or the graph $tC_3+C_4$ for any integer $t\ge 0$ (as shown Bonnet et~al.~\cite{DBLP:conf/esa/BonnetDGTW23}).
Furthermore, Korhonen showed that for any planar graph $H$, MWIS can be solved in subexponential time in the class of $H$-induced-minor-free graphs~\cite{MR4539481}.

Recall that if $\mathcal{G}$ is a tame graph class, then MWIS and many other problems are solvable in polynomial time for graphs in $\mathcal{G}$.
This motivates the following questions.

\begin{question}\label{que:tame-H-induced-minor-free}
For which graphs $H$ is the class of graphs excluding $H$ as an induced minor tame? 
\end{question}

\begin{question}\label{que:tame-H-induced-topological-minor-free}
For which graphs $H$ is the class of graphs excluding $H$ as an induced topological minor tame? 
\end{question}

The answer to \Cref{que:tame-H-induced-minor-free} may provide further partial support towards a positive answer to \cref{que:MWIS-poly-for-planar-H}.
Let us remark that for the case of induced subgraph relation, the aforementioned dichotomy of hereditary graph classes defined by a finite set of forbidden induced subgraphs into tame and feral (see~\cite{gajarsky2022taming,GartlandL23}) implies that a graph class defined by a \emph{single} forbidden induced subgraph $H$ is tame if $H$ is an induced subgraph of the path $P_4$ or of the graph $2P_2$, and feral, otherwise.

\subsection{Our results}

We completely answer \Cref{que:tame-H-induced-minor-free,que:tame-H-induced-topological-minor-free}.
We show that every graph class defined by a single forbidden induced minor or induced topological minor is either tame or feral, and classify the two cases. 
Graphs used in our characterizations are depicted in~\cref{fig:bhd}.

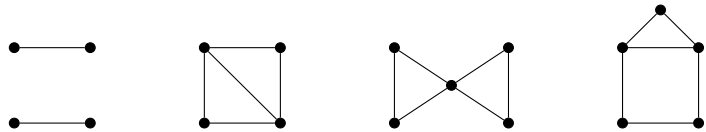
\begin{figure}[H]
\centering
\begin{tikzpicture}[vertex/.style={inner sep=1.3pt,draw,circle, fill}]
\begin{scope}[xshift=-2.5cm]
\node[vertex] (1) at (0,0) {};
\node[vertex] (2) at (1,0) {};
\node[vertex] (3) at (0,1) {};
\node[vertex] (4) at (1,1) {};
\draw[] (1)--(2);
\draw[] (3)--(4);
\end{scope}
\begin{scope}[xshift=2.5cm]
\node[vertex] (1) at (0,1) {};
\node[vertex] (2) at (0,0) {};
\node[vertex] (3) at (1.5,0) {};
\node[vertex] (4) at (1.5,1) {};
\node[vertex] (5) at (0.75,0.5) {};
\draw[] (1)--(2)--(5)--(3)--(4)--(5)--(1);
\end{scope}
\begin{scope}[xshift=5.5cm]
\node[vertex] (1) at (0,1) {};
\node[vertex] (2) at (0,0) {};
\node[vertex] (3) at (1,0) {};
\node[vertex] (4) at (1,1) {};
\node[vertex] (5) at (0.5,1.5) {};
\draw[] (1)--(2)--(3)--(4)--(1);
\draw[] (4)--(5)--(1);
\end{scope}
\begin{scope}[xshift=0cm]
\node[vertex] (1) at (0,1) {};
\node[vertex] (2) at (0,0) {};
\node[vertex] (3) at (1,0) {};
\node[vertex] (4) at (1,1) {};
\draw[] (1)--(2)--(3)--(4)--(1);
\draw[] (1)--(3);
\end{scope}
\end{tikzpicture}\caption{From left to right: the $2P_2$, the diamond, the butterfly, and the house.}\label{fig:bhd}
\end{figure}

\begin{restatable}{theorem}{dichotomyinducedminor}
\label{thm:dichotomy-induced-minor}
 Let $H$ be a graph and let $\mathcal{G}$ be the class of graphs that do not contain $H$ as an induced minor. 
 Then, the following statements are equivalent:
 \begin{enumerate}
     \item\label{cond-1} $\mathcal{G}$ is tame.
     \item\label{cond-4} $\mathcal{G}$ is not feral.
     \item\label{cond-2} $H$ is an induced subgraph of the diamond, the butterfly, or the house.
     \item\label{cond-3} $H$ is an induced minor of the butterfly or of the house.
 \end{enumerate}
\end{restatable}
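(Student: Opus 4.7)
The plan is to establish the four-way equivalence via the cycle $(1) \Rightarrow (2) \Rightarrow (3) \Leftrightarrow (4) \Rightarrow (1)$. The implication $(1) \Rightarrow (2)$ is immediate from the definitions, as a polynomial upper bound on the number of minimal separators of graphs in $\mathcal{G}$ is incompatible with the exponential lower bound required for $\mathcal{G}$ to be feral.

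For $(3) \Leftrightarrow (4)$, the key observation is that the diamond is an induced minor of the house, obtained by contracting a suitable edge (namely, any edge of the $4$-cycle subgraph of the house). Combined with the trivial fact that induced subgraphs are induced minors, this yields $(3) \Rightarrow (4)$. The converse is a short finite check: I would enumerate the induced minors of the butterfly and the house---both graphs on at most five vertices, so the list is short---and verify that each is an induced subgraph of the diamond, the butterfly, or the house.

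For the tame direction $(3) \Rightarrow (1)$, I invoke monotonicity: whenever $H$ is an induced subgraph of $K$, $H$ is also an induced minor of $K$, and hence by transitivity of the induced-minor relation the class of $H$-induced-minor-free graphs is a subclass of the class of $K$-induced-minor-free graphs. It therefore suffices to establish tameness in the three maximal cases $H \in \{\text{diamond}, \text{butterfly}, \text{house}\}$. For each such $H$, I would aim to prove a structural decomposition theorem for $H$-induced-minor-free graphs and then bound the number of minimal separators polynomially, either by reducing to one of the tame classes listed in the introduction (chordal, weakly chordal, $2P_2$-free, or $H$-graphs, among others) or by a direct counting argument on the decomposition. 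Obtaining the appropriate decomposition theorem for each of the three maximal forbidden graphs is the expected main obstacle.

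For the feral direction $(2) \Rightarrow (3)$, I argue by contrapositive and assume $H$ is not an induced subgraph of the diamond, the butterfly, or the house. The graphs $H$ with this property partition naturally into three groups: graphs on at least six vertices; five-vertex graphs distinct from butterfly and house; and the finite list of four-vertex graphs not induced in any of the three named graphs, namely $K_4$, $K_{1,3}$, $K_3 + K_1$, $P_3 + K_1$, $K_2 + 2K_1$, and $4K_1$. For each such $H$, I would exhibit an explicit feral family of $H$-induced-minor-free graphs, drawing on standard constructions with exponentially many minimal separators (grids and walls, half-graphs, bipartite blowups, complements of paths, and the like). The bulk of the work is the case analysis verifying that each chosen family genuinely excludes the corresponding $H$ as an induced minor; this bookkeeping, rather than any single ingenious construction, is where most of the technical labor resides.
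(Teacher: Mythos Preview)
Your cycle structure is fine, but both substantive directions contain genuine gaps relative to what the paper actually does.

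\textbf{The feral direction $(2)\Rightarrow(3)$.} Your contrapositive plan is to produce, for every $H$ failing condition~(3), an explicit feral family that excludes $H$ as an induced minor. But there are infinitely many such $H$, and ``graphs on at least six vertices'' is not a case you can discharge by naming a construction per graph. What you need---and what you have not supplied---is a \emph{finite} collection of feral families $\mathcal{F}_1,\ldots,\mathcal{F}_r$ with the property that every $H$ failing~(3) is excluded as an induced minor by at least one $\mathcal{F}_i$. The paper's insight is that $r=2$ suffices: short prisms and short thetas. If $\mathcal{G}$ is not feral then neither family can be a subclass of $\mathcal{G}$, so $H$ is an induced minor of some $k$-prism \emph{and} of some $\ell$-theta. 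The real work is then a single structural lemma (\Cref{H-induced-minor-of-three-graphs}) characterising the graphs that are simultaneously induced minors of a short prism and a short theta; it uses that short thetas have feedback vertex number~$1$ and short prisms are co-bipartite, and concludes that any such $H$ is an induced subgraph of the diamond, the butterfly, or the house. Your grab-bag of grids, walls, half-graphs, etc.\ is unnecessary and, more importantly, is not yet a proof.

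\textbf{The tame direction $(3)\Rightarrow(1)$.} ``Prove a structural decomposition theorem and then count minimal separators'' is a hope, not a plan: no such decomposition for butterfly- or house-induced-minor-free graphs is available off the shelf, and obtaining one is not obviously easier than the original problem. The paper instead invokes the sufficient condition of Gajarsk\'y et~al.\ (\Cref{thm:k-creature-free-tame}): a hereditary class is tame if it is $k$-creature-free and excludes the $k$-skinny ladder as an induced minor for some fixed~$k$. The paper verifies this for $k=3$ in both maximal cases, the butterfly case being short and the house case requiring a more delicate argument via the auxiliary classes $\mathcal{S},\mathcal{T},\mathcal{M}$ of~\cite{dallard2024detecting}. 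You should replace the decomposition idea with this criterion.
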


\begin{restatable}{theorem}{dichotomyinducedsubdivision}
\label{thm:dichotomy-induced-subdivision}
 Let $H$ be a graph and let $\mathcal{G}$ be the class of graphs that do not contain an induced subdivision of $H$. 
Then, the following statements are equivalent:
 \begin{enumerate}
     \item\label{cond-1-subdivision} $\mathcal{G}$ is tame.
      \item\label{cond-4-subdivision} $\mathcal{G}$ is not feral.\item\label{cond-2-subdivision} $H$ is an induced subgraph of $2P_2$, the diamond, or the house. 
     \item\label{cond-3-subdivision} $H$ is an induced topological minor of $2P_2$ or of the house.
 \end{enumerate}
\end{restatable}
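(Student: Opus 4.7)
The plan is to establish the cycle $(1) \Rightarrow (2) \Rightarrow (3) \Leftrightarrow (4) \Rightarrow (1)$. The implication $(1) \Rightarrow (2)$ is immediate from the definitions. For the combinatorial equivalence $(3) \Leftrightarrow (4)$, the key observation is that the house is obtained from the diamond by subdividing any edge incident to exactly one of its two degree-$3$ vertices: writing the diamond as $K_4$ minus the edge $cd$, subdividing the edge $ac$ produces precisely the house. Hence the diamond is itself an induced topological minor of the house, and by transitivity of the induced topological minor relation, so is every induced subgraph of the diamond. Together with the trivial cases of induced subgraphs of $2P_2$ and of the house, this yields $(3) \Rightarrow (4)$. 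The converse is a finite case check: any $H$ whose subdivision embeds as an induced subgraph of $2P_2$ or of the house has at most five vertices, and an enumeration of induced subgraphs of these two graphs together with their possible smoothings of degree-$2$ vertices exhausts precisely the graphs listed in $(3)$.

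For $(3) \Rightarrow (1)$, I would invoke the monotonicity principle that $H_1 \leq_{\mathrm{iTM}} H_2$ implies $\mathcal{G}_{H_1} \subseteq \mathcal{G}_{H_2}$ (a direct contrapositive consequence of transitivity of $\leq_{\mathrm{iTM}}$), so that tameness is inherited by subclasses. Combined with $(3) \Leftrightarrow (4)$, this reduces the task to proving tameness of the two specific classes $\mathcal{G}_{2P_2}$ and $\mathcal{G}_{\mathrm{house}}$. For $H = 2P_2$, any subdivision of $2P_2$ contains $2P_2$ itself as an induced subgraph (pick one edge from each of the two disjoint paths), so $\mathcal{G}_{2P_2}$ coincides with the class of $2P_2$-free graphs, already shown to be tame by Milani\v{c} and Piva\v{c}. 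The main obstacle is then the tameness of $\mathcal{G}_{\mathrm{house}}$. I expect this to require a dedicated structural analysis of graphs with no induced subdivision of the house---for instance, establishing a tree-like decomposition of such graphs, or bounding minimal separators directly by exploiting the strong restrictions that forbidding \emph{every} subdivision of the house places on the interaction between triangles and long induced cycles.

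For the feral direction, argued contrapositively as $\neg(3) \Rightarrow \neg(2)$, I would produce for each $H$ violating $(3)$ an infinite family of graphs in $\mathcal{G}_H$ with exponentially many minimal separators. A natural case split records why $H$ fails $(3)$: either $H$ contains a structural feature absent from all of $2P_2$, the diamond, and the house (an independent triple, a cycle of length at least $5$, a $K_4$, a path on five vertices, a longer subdivided triangle, and so on), or $H$ simply has too many vertices. For each such obstruction one can invoke or adapt feral families from the works of Gajarsk\'y et~al.\ and Gartland--Lokshtanov; the subtle step is verifying that these families avoid not just $H$ as an induced subgraph but every induced subdivision of $H$, a strictly stronger requirement that will demand careful case-by-case checking.
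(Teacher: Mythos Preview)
Your scaffold $(1)\Rightarrow(2)\Rightarrow(3)\Leftrightarrow(4)\Rightarrow(1)$ is fine, and your handling of $(1)\Rightarrow(2)$, $(3)\Leftrightarrow(4)$, and the reduction of $(4)\Rightarrow(1)$ to tameness of $\mathcal{G}_{2P_2}$ and $\mathcal{G}_{\mathrm{house}}$ is correct. The two places where your proposal remains a sketch are exactly where the paper does something specific, and in both cases the paper's route is markedly simpler than what you outline.

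\textbf{Feral direction.} Your plan to case-split on the structural features of $H$ and, for each case, exhibit a feral family avoiding every subdivision of $H$ is in principle viable but unnecessarily laborious; you yourself flag the difficulty of upgrading ``avoids $H$'' to ``avoids every subdivision of $H$''. The paper sidesteps this entirely by fixing just \emph{two} feral families---short $k$-prisms and short $\ell$-thetas---and arguing contrapositively from $\neg(2)$. If $\mathcal{G}$ is not feral, neither family lies wholly in $\mathcal{G}$, so some $k$-prism and some $\ell$-theta each contain an induced subdivision of $H$; hence $H$ is an induced minor of both. A separate combinatorial lemma then shows that any common induced minor of a short prism and a short theta is an induced subgraph of the diamond, the butterfly, or the house. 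Finally the butterfly is excluded by a direct argument: any two induced cycles in a short theta share both degree-$\ell$ vertices, whereas in a subdivided butterfly the two cycles meet in a single vertex. This reduces an unbounded case analysis on $H$ to a single finite characterization.

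\textbf{Tameness of $\mathcal{G}_{\mathrm{house}}$.} Rather than a bespoke decomposition, the paper invokes the sufficient condition of Gajarsk\'y et~al.: a class is tame if, for some fixed $k$, every member is $k$-creature-free and excludes the $k$-skinny-ladder as an induced minor. Both conditions are verified for $k=3$ in $\mathcal{G}_{\mathrm{house}}$, using a structural lemma of Dallard et~al.\ (on induced subgraphs in $\mathcal{S}\cup\mathcal{T}\cup\mathcal{M}$ connecting three prescribed extremities) to locate a subdivided house inside any purported $3$-creature or $\Gamma_{2,2,3}$-induced-minor model. Your ``tree-like decomposition'' idea is not wrong in spirit, but the creature/skinny-ladder criterion is the concrete tool that makes the argument go through.
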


We complement the above results by analyzing the complexity of the recognition problems for the maximal tame graph classes in each of the above two theorems.
These correspond to~\cref{que:tame-H-induced-minor-free} for the case when $H$ is either the butterfly or the house and to \cref{que:tame-H-induced-topological-minor-free} for the case when $H$ is either $2P_2$ or the house.
As already observed, determining if a given graph $G$ contains $2P_2$ as an induced topological minor can be done in polynomial time.
Determining if a given graph $G$ contains the butterfly as an induced minor can also be done in polynomial time, using a characterization of such graphs due to Dumas and Hilaire (personal communication, 2024; for completeness, we present the argument in~\Cref{sec:recognition}). 
We provide polynomial-time algorithms for the remaining two cases.

\begin{restatable}{theorem}{houseinducedminorrecognition}
\label{thm:recognition-house-minor}
Determining if a given graph $G$ contains the house as an
induced minor can be done in polynomial time.
\end{restatable}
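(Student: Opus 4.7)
The plan is as follows. The house is a triangle on $\{1,4,5\}$ together with an induced path $1{-}2{-}3{-}4$ of length $3$ whose internal vertices are non-adjacent to the apex $5$. Hence $G$ contains the house as an induced minor if and only if $G$ admits five pairwise disjoint connected vertex subsets $B_1,\ldots,B_5$ realizing exactly the house's adjacencies and non-adjacencies. I would attack the problem by enumerating a small "skeleton" of such a model and verifying completability via connectivity checks in auxiliary graphs.

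First I would treat the case where $B_1, B_4, B_5$ are singletons, so that $\{a,b,c\}$ is a triangle in $G$ with $B_1=\{a\}$, $B_4=\{b\}$, $B_5=\{c\}$. For each of the $O(n^3)$ triangles of $G$, set
\[
V_2 = V(G)\setminus(\{a,b,c\}\cup N(b)\cup N(c)), \qquad V_3 = V(G)\setminus(\{a,b,c\}\cup N(a)\cup N(c)).
\]
Suitable branch sets $B_2\subseteq V_2$ and $B_3\subseteq V_3$ exist if and only if $G[V_2\cup V_3]$ contains a path from some vertex of $V_2\cap N(a)$ to some vertex of $V_3\cap N(b)$ whose vertex set can be split monotonically into a $V_2$-prefix (forming $B_2$) and a $V_3$-suffix (forming $B_3$), where vertices in $V_2\cap V_3$ may lie on either side. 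This existence check reduces to standard reachability in an auxiliary directed graph on $O(n)$ nodes $(v,s)$ with $v\in V_2\cup V_3$ and $s\in\{2,3\}$, where arcs enforce that the side coordinate is non-decreasing; it is therefore polynomial per triangle.

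To handle the case where one or more of $B_1, B_4, B_5$ has more than one vertex, I would enumerate triples of edges $(e_{14},e_{15},e_{45})\in E(G)^3$ witnessing the three cross-adjacencies of the triangle branch sets. Each triple determines up to six contact vertices that must lie in $B_1\cup B_4\cup B_5$; the triangle branch sets themselves can then be realized as connected subgraphs of $G$ containing these contact vertices, while $B_2$ and $B_3$ are found as in the first case (with $\{a,b,c\}$ replaced by the three larger triangle branch sets). I would support this enumeration by a normal-form lemma asserting that in a minimum-size induced minor model of the house each branch set may be taken as an induced subtree with at most three leaves, each leaf being a contact vertex to a neighbouring branch set. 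With the normal form in hand, feasibility for each skeleton reduces to a polynomial number of connectivity and forbidden-adjacency checks.

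The main obstacle is the normal-form lemma, because naive shrinking of a branch set can destroy a required adjacency or (more subtly) create a forbidden adjacency between branch sets. The expected argument proceeds via minimality: starting from a minimum model, any non-contact internal vertex of a branch set whose removal preserves the branch set's connectivity may be removed, since by minimality no new adjacency to another branch set can be created by such a removal. Once the normal form is secured, combining the triangle-case subroutine with the edge-triple enumeration and the monotone-reachability check yields the polynomial-time recognition algorithm claimed by the theorem.
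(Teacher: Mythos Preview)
Your triangle case is sound: the monotone directed reachability correctly decides whether suitable $B_2,B_3$ exist once $B_1,B_4,B_5$ are fixed as singletons. The gap is in the general case. Fixing the three edges $e_{14},e_{15},e_{45}$ pins down at most six contact vertices, but the branch sets $B_1,B_4,B_5$ are still trees of unbounded size, and the non-adjacency constraints are \emph{circular}: to define the regions $V_2,V_3$ for your reachability routine you must already know $N[B_4]$ and $N[B_5]$, while any candidate for $B_5$ must in turn avoid $N[B_2]\cup N[B_3]$. Your normal-form lemma (branch sets are subtrees whose leaves are contact vertices) is correct for minimum models, but it does not break this circularity; after the lemma you are still looking for several vertex-disjoint trees, each confined to a region that depends on the others. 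That is a genuine disjoint-subgraph problem, and the sentence ``feasibility for each skeleton reduces to a polynomial number of connectivity and forbidden-adjacency checks'' is precisely where the argument is missing.

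A related point: since vertices $2,3,5$ all have degree~$2$ in the house, one may always take $B_2,B_3,B_5$ to be singletons (this is \Cref{prop:simple-induced-minor-models-of-all-paths}). So the natural parametrisation is the reverse of yours---guess $b_2,b_3,b_5$ and search for $B_1,B_4$---but even then one must find two \emph{disjoint} connected sets $B_1\subseteq V(G)\setminus N[b_3]$ and $B_4\subseteq V(G)\setminus N[b_2]$, each touching $N(b_5)$ and each other, with no monotone structure to exploit. The paper sidesteps direct model-finding entirely: it proves (\Cref{thm:characteriation-him-free}) that $G$ contains the house as an induced minor if and only if $G$ contains an induced subdivision of the house or an induced long twin wheel, and then detects these two configurations separately---the first via the long-unichord and long-theta tests of \Cref{sec:recognition-house-subdivision} (ultimately three-in-a-tree), the second by an $\mathcal{O}(n^4m)$ enumeration over diamonds.
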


\begin{restatable}{theorem}
{houseinducedsubdivisionrecognition}
\label{thm:recognition-house-subdivision}
Determining if a given graph $G$ contains the house as an
induced topological minor can be done in polynomial time.
\end{restatable}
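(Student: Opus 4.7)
The plan is to reformulate the detection problem as finding a specific theta-like configuration, enumerate a bounded-size ``skeleton'' of the configuration, and complete the search by a cleaning step together with a polynomial-time subroutine for locating induced paths. Observe first that the house is the theta graph $\Theta_{1,2,3}$, that is, two degree-$3$ vertices joined by three internally vertex-disjoint paths of lengths $1$, $2$, and $3$. Consequently, every subdivision of the house is a theta graph $\Theta_{a,b,c}$ whose path lengths, when sorted as $\ell_1\leq \ell_2\leq \ell_3$, satisfy $\ell_1\geq 1$, $\ell_2\geq 2$, and $\ell_3\geq 3$. Hence $G$ contains an induced subdivision of the house if and only if there exist $u,v\in V(G)$ and three internally vertex-disjoint induced $u$-$v$ paths whose union is an induced subgraph of $G$ and whose lengths satisfy the above inequalities.

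I would iterate over all $O(n^2)$ candidate pairs $(u,v)$ playing the role of the two degree-$3$ branch vertices and split into two cases according to whether $uv\in E(G)$. In the case $uv\in E(G)$ the shortest of the three paths is taken to be the edge $uv$, while otherwise all three paths must have length at least $2$. For each fixed pair $(u,v)$, I additionally guess a constant-sized skeleton of the target subdivision: the first internal vertex of each non-trivial path at $u$ and at $v$ (at most six vertices), plus, when necessary, one extra internal vertex on the designated longest path to enforce the $\ell_3\geq 3$ constraint. This yields a polynomial number of candidate skeletons.

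For each candidate skeleton, I would clean $G$ by discarding every vertex whose inclusion in the remainder of one of the three paths would necessarily create a forbidden edge (a chord within a path, or a crossing edge between distinct paths), and then invoke a polynomial-time subroutine -- for example, the three-in-a-tree algorithm of Chudnovsky and Seymour, or a direct connectivity-based procedure on an appropriate auxiliary graph -- to locate the three internally vertex-disjoint induced paths completing the configuration in the cleaned graph. The main obstacle is proving correctness of the cleaning: one must show that every vertex surviving the cleaning can safely lie on one of the three paths and, conversely, that the cleaning never destroys a genuine induced subdivision of the house. This is expected to be a delicate case analysis on possible chord and cross-path adjacencies, in the spirit of the cleaning arguments used for the algorithmic detection of Truemper configurations. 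Combining the outer enumeration with the polynomial-time subroutine then yields a polynomial-time detection algorithm.
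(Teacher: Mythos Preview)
Your overall plan---enumerate a bounded skeleton near the branch vertices, clean, then invoke an induced-tree subroutine---and your identification of subdivisions of the house with thetas $\Gamma_{\ell_1,\ell_2,\ell_3}$ satisfying $\ell_1\ge 1$, $\ell_2\ge 2$, $\ell_3\ge 3$ are both correct and coincide with the paper's starting point. The gap is in the specific skeleton you fix. By guessing \emph{both} branch vertices $u,v$ together with the first internal vertex of every path at each end, the residual task after any cleaning is to find three pairwise anticomplete induced paths between three prescribed terminal pairs. This is the induced disjoint-paths problem, which is \NP-hard already for two pairs; neither three-in-a-tree nor a ``connectivity-based procedure'' solves it. Three-in-a-tree only decides whether three given vertices lie in a common induced tree---it does not let you prescribe where the three branches of that tree end. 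So the difficulty you flag (``correctness of the cleaning'') is not the real obstacle: even a perfect cleaning would leave you with an \NP-hard subproblem.

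The paper avoids this by guessing the local picture around \emph{only one} branch vertex $a$ (its three neighbours, plus one extra step along the designated long path to force $\ell_3\ge 3$, and the set $X$ of next neighbours needed to clean correctly), deleting everything adjacent to the guessed vertices except the allowed set, and then calling three-in-a-tree on the three ``loose ends'' $v_1,v_2,v_3$. The second branch vertex is thus \emph{discovered} as the branching point of the returned tree rather than guessed in advance, and this is exactly what makes three-in-a-tree applicable. For the case $uv\in E(G)$---equivalently, detecting a long unichord---the paper does not attempt cleaning at all but invokes the dedicated polynomial-time algorithm of Trotignon and Pham as a black box; your sketch does not supply an argument for this case either.
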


Applying \Cref{thm:FTV} to the two maximal tame graph classes identified in \Cref{thm:dichotomy-induced-minor,thm:dichotomy-induced-subdivision} yields the following algorithmic implications of our results.

\begin{corollary}\label{thm:tame-applications}
For fixed integer $t\ge 0$, fixed \cmsotwo formula $\phi$, the $(t,\phi)$-\textsc{Maximum Weight Induced Subgraph} problem is solvable by a robust polynomial-time algorithm whenever the input graph does not contain the butterfly as an induced minor or the house as an induced topological minor.
\end{corollary}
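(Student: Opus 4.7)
The plan is to derive this corollary as a direct combination of \Cref{thm:FTV} with \Cref{thm:dichotomy-induced-minor,thm:dichotomy-induced-subdivision}. First I would instantiate \Cref{thm:dichotomy-induced-minor} with $H$ being the butterfly: the butterfly is trivially an induced minor of itself, so condition~(\ref{cond-3}) holds and consequently the class of butterfly-induced-minor-free graphs is tame. Analogously, applying \Cref{thm:dichotomy-induced-subdivision} with $H$ being the house (which is an induced topological minor of itself via the trivial subdivision) shows that condition~(\ref{cond-3-subdivision}) holds and hence the class of graphs containing no induced subdivision of the house is tame.

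With tameness of the two ambient classes established, \Cref{thm:FTV} applied to each of them immediately yields a polynomial-time algorithm for $(t,\phi)$-\textsc{Maximum Weight Induced Subgraph}. The robustness claim is then inherited directly from the discussion following \Cref{thm:FTV}, where it is noted that the algorithm of Fomin, Todinca, and Villanger is robust in the sense of Raghavan and Spinrad: it runs on any input graph $G$, is correct whenever the number of minimal separators of $G$ is polynomially bounded, and otherwise reports that $G$ lies outside the claimed class. In particular, no preliminary recognition routine for the butterfly-induced-minor-free or house-induced-topological-minor-free classes is needed in order to state the corollary, although such routines are provided by \Cref{thm:recognition-house-minor,thm:recognition-house-subdivision} and the discussion preceding them.

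There is essentially no obstacle in this argument; the whole proof is a packaging of prior results. The only point worth double-checking is that each of the two dichotomies is being used in the ``tame'' direction, which amounts to verifying that the butterfly and the house occur in conditions~(\ref{cond-3}) and~(\ref{cond-3-subdivision}) of \Cref{thm:dichotomy-induced-minor,thm:dichotomy-induced-subdivision}, respectively, as trivial self-witnesses.
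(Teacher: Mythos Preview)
Your proposal is correct and follows essentially the same route as the paper, which simply states the corollary as an immediate consequence of applying \Cref{thm:FTV} to the two maximal tame classes identified in \Cref{thm:dichotomy-induced-minor,thm:dichotomy-induced-subdivision}. Your unpacking of why each class is tame (via the self-witnesses in conditions~(\ref{cond-3}) and~(\ref{cond-3-subdivision})) and your handling of robustness match the paper's intent exactly.
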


The above two graph classes generalize some graph classes extensively studied in the literature (see, e.g., \cite{MR522739,MR795937, zbMATH01289516}). 
More precisely:
\begin{itemize}
\item The class of graphs that do not contain the butterfly as an induced minor generalizes the class of $2P_2$-free graphs (hence, in particular, the classes of split graphs and complements of chordal graphs).
\item The class of graphs that do not contain the house as an induced topological minor (or even as an induced minor) is a common generalization of the classes of chordal graphs, cographs, and block-cactus graphs.
Indeed, these graph classes exclude the $C_4$, the $P_4$, and the diamond as an induced minor, respectively (for the block-cactus graphs, see~\cite{MR4334541}).
\end{itemize}

Since \textsc{Maximum Weight Independent Set} is a special case of $(t,\phi)$-\textsc{Maximum Weight Induced Subgraph}, \Cref{thm:tame-applications} provides further partial support for \Cref{que:MWIS-poly-for-planar-H}, giving an affirmative answer to the question for the cases when $H$ is either the butterfly or the house.
Let us also note that the butterfly is the $2$-friendship graph, hence, the former result also follows from the aforementioned result of Bonnet et al.~\cite{DBLP:conf/esa/BonnetDGTW23}.

\subsection{Our approach}

We briefly summarize some of the key ideas leading to our results. 
An important ingredient in our tame vs.~feral classification of graph classes excluding a single induced minor or induced topological minor is a sufficient condition for tameness due to Gajarsk{\'y}~et~al.~\cite{gajarsky2022taming}.
(As the condition is somewhat technical, we postpone the precise statement of the theorem to~\Cref{sec:sufficient-conditions}.)
Indeed, we show that if $\G$ is the class of graphs excluding the butterfly as an induced minor or the class of graphs excluding the house as an induced topological minor, then $\G$ satisfies the aforementioned sufficient condition.
While in the former case the argument is rather straightforward, the latter case requires more work. 
In particular, we make use of a recent structural result due to~Dallard~et~al.~\cite{dallard2024detecting} that played an important role in their polynomial-time algorithm for the recognition of graphs not containing the complete bipartite graph $K_{2,3}$ as an induced minor (again, we refer to~\cref{sec:sufficient-conditions} for details).

Another important step towards the proofs of our dichotomy results is a characterization of graphs that are induced minors of some short prism and some short theta, which are two specific feral graph families (see \Cref{sec:preliminaries}).
More precisely, we show that if a graph $H$ is an induced minor of some short prism and of some short theta, then $H$ is an induced subgraph of the diamond, the butterfly, or the house (see \Cref{sec:dichotomy-minor}).

Our approach towards the polynomial-time algorithms for the recognition of graphs containing the house as an induced minor or induced topological minor, or the butterfly as an induced minor, can be summarized as follows.
First, we observe that a graph contains the house as an induced topological minor if and only if it contains either an induced long-unichord or an induced long-theta  (see~\Cref{sec:recognition-house-subdivision}). 
The former can be recognized in polynomial time using the algorithm by Trotignon and Pham~\cite{trotignon2018chi}, while for the detection of a long-theta in a graph we develop a polynomial-time algorithm based on the known \emph{three-in-a-tree} algorithm by~Chudnovsky and Seymour~\cite{chudnovsky2010three} (and its recent improvement by~Lai, Lu and Thorup~\cite{DBLP:conf/stoc/LaiLT20}).

The result for graphs containing the house as an induced minor is reduced to the previous case.
 This is done by means of the following structural result (\Cref{thm:characteriation-him-free}): a graph $G$ contains the house as an induced minor if and only if $G$ contain an induced subdivision of the house or $G$ contains an induced long twin wheel, that is, a graph obtained from a cycle of length at least five by replacing a vertex with a pair of adjacent vertices with the same closed neighborhoods.

Finally, let us mention that, as several of our results deal with the case when the forbidden induced minor $H$ contains vertices of degree~$2$, we establish a property of induced minor models of such graphs $H$ that can be assumed without loss of generality 
(see \Cref{sec:thin-walks}). 

\subsection{Structure of the paper}

\Cref{sec:preliminaries} contains preliminary definitions and notations used throughout the paper. 
In~\Cref{sec:thin-walks} we discuss induced minor models of graphs containing vertices of degree~$2$. 
In~\Cref{sec:sufficient-conditions} we show that excluding the butterfly as an induced minor or the house as an induced topological minor results in tame graph classes. 
These results are used in~\Cref{sec:dichotomy-minor}, where the dichotomy theorems are developed characterizing tame graph classes among graph classes excluding a single induced minor or induced topological minor.
Finally, in~\Cref{sec:recognition} we develop polynomial-time algorithms for the recognition of classes of house-induced-minor-free graphs, butterfly-induced-minor-free graphs, and house-induced-topological-minor-free graphs.
We conclude the paper with some open questions and directions for future research in~\cref{sec:conclusion}.

\section{Preliminaries}
\label{sec:preliminaries}

The vertex set and the edge set of a graph $G$ are denoted by $V(G)$ and $E(G)$, with $n=|V(G)|$ and $m=|E(G)|$.
When discussing the running times of algorithms, we will slightly abuse the notation by writing  $\mathcal{O}(m)$ instead of $\mathcal{O}(n+m)$ for linear running time.

The \emph{neighborhood} of a vertex $v$ in a graph $G$ is the set $N_G(v)$ of all vertices adjacent to $v$ in $G$.
The \emph{closed neighborhood} of $v$ is the set $N_G(v)\cup\{v\}$, denoted by $N_G[v]$.
Given a set $X\subseteq V(G)$, we denote by $N(X)$ and $N[X]$ the sets $\bigcup_{v\in X}N_G[v]\setminus X$ and $\bigcup_{v\in X}N_G[v]$, respectively.
If the graph $G$ is clear from the context, we simply write $N(v)$, $N[v]$, $N(X)$ and $N[X]$ instead of $N_G(v)$, $N_G[v]$, $N_G(X)$ and $N_G[X]$, respectively.
A \emph{clique} in a graph $G$ is a set of pairwise adjacent vertices in $G$, while an \emph{independent set} in a graph $G$ is a set of pairwise non-adjacent vertices in $G$.
A \emph{triangle} in a graph $G$ is a clique of size $3$.
A \emph{minimal separator} in a graph $G$ is a set $S\subseteq V(G)$ such that there exist non-adjacent vertices $a,b\in V(G)$ belonging to distinct connected components of $G-S$, while for any $S'\subset S$, the vertices $a$ and $b$ are in the same connected component of $G-S'$.
A set $S$ of vertices in a graph $G$ is a \emph{dominating set} in $G$ if every vertex in $G$ is either in $S$ or has a neighbor in it. 

Given a graph $G$, a \emph{walk} in $G$ is a finite nonempty sequence $W=(w_1,\ldots, w_k)$ of vertices in $G$ such that every two consecutive vertices are joined by an edge in $G$.
If $w_1=w_k$, then $W$ is said to be a \emph{closed} walk.
The \emph{length} of a walk $(w_1,\ldots, w_k)$ is defined to be $k-1$.
Given two vertices $u,v$ in a graph $G$, a \emph{$u,v$-walk} in $G$ is any walk $(w_1,\ldots, w_k)$ in $G$ such that $u = w_1$ and $v = w_k$.
More generally, for two sets $A,B\subseteq V(G)$, \emph{a walk from $A$ to $B$} is any walk $(w_1,\ldots, w_k)$ in $G$ such that $w_1\in A$ and $w_k\in B$. 
Given two walks $W=(w_1,\ldots, w_k)$ and $Z=(z_1,\ldots, z_\ell)$ in a graph $G$ such that $w_k=z_1$, we define the \emph{concatenation} of $W$ and $Z$ to be the walk obtained by traversing first $W$ and then $Z$, that is, the walk $(w_1,\ldots, w_k=z_1,z_2,\ldots, z_\ell)$.
The concatenation of walks $W$ and $Z$ will be denoted by $W\oplus Z$. 
A \emph{path} in $G$ is a walk in which all vertices are pairwise distinct.
Given a $u,v$-path $P$, the vertices $u$ and $v$ are the \emph{endpoints} of $P$, while all the other vertices are its \emph{internal vertices}.
A \emph{cycle} in $G$ is a closed walk with length at least $3$ such that all its vertices are pairwise distinct, except that $w_1=w_k$.
The \emph{distance} between $u$ and $v$ in a connected graph $G$ is the minimum length of a $u,v$-path in $G$.

A graph $H$ is an \emph{induced subgraph} of a graph $G$ if $V(H)\subseteq V(G)$ and $E(H)=\{uv\in E(G)\colon \{u,v\}\subseteq V(H)\}$. 
In this case, the graph $H$ will also be called the \emph{subgraph of $G$ induced by $V(H)$}. 
Given a set $S \subseteq V (G)$, we denote by $G[S]$ the subgraph of $G$ induced by $S$.
The \emph{complement} of a graph $G$ is the graph $\overline{G}$ with the same vertex set as $G$ and with the edge set $E(\overline{G})=\{uv\colon \{u,v\}\subseteq V(G),u\neq v, uv\notin E(G)\}$. 
An \emph{anticomponent} of a graph $G$ is the subgraph of $G$ induced by the vertex set of a connected component of~$\overline{G}$.
A graph is \emph{anticonnected} if its complement is connected.
If $G$ is a graph and $A$ and $B$ are disjoint subsets of $V(G)$, we say that they are \emph{complete} (resp., \emph{anticomplete}) \emph{to each other} in $G$ if $\{ab\colon a\in A, b\in B\} \subseteq E(G)$ (resp., $\{ab\colon a\in A, b\in B\}\cap E(G)=\emptyset$).
If the vertex set of $G$ can be partitioned into sets $V_1$ and $V_2$ that are anticomplete to each other in $G$, then $G$ is said to be the \emph{disjoint union} of graphs $G[V_1]$ and $G[V_2]$; we denote this by $G=G[V_1]+G[V_2]$. 
Similarly, if the vertex set of a graph $G$ can be partitioned into two sets $V_1$ and $V_2$ that are complete to each other in $G$, we say that $G$ is the \emph{join} of the subgraphs of $G$ induced by $V_1$ and $V_2$; we denote this by $G=G[V_1]\ast G[V_2]$.
Given a non-negative integer $k$, the disjoint union of $k$ copies of $G$ is denoted by $kG$.

\subsection{Particular graphs and graph classes}

\begin{sloppypar}
The complete graph on $n$ vertices is denoted by $K_n$. 
We denote by $P_n$ the $n$-vertex \emph{path}, that is, a graph whose vertices $\{v_1,\ldots, v_n\}$ can be ordered linearly so that two vertices are adjacent if and only if they appear consecutively in the ordering.
Similarly, for an integer $n\ge 3$, we denote by $C_n$ the $n$-vertex \emph{cycle}, that is, a graph whose vertices $\{v_1,\ldots, v_n\}$ can be ordered cyclically so that two vertices are adjacent if and only if they appear consecutively in the ordering.
A \emph{hole} in a graph $G$ is an induced subgraph isomorphic to the $k$-vertex cycle $C_k$ for some $k\ge 4$.
A graph is \emph{bipartite} if its vertex set can be partitioned into two independent sets called \emph{parts}.
A \emph{complete bipartite} graph is a bipartite graph having all possible edges joining vertices in different parts (in other words, a join of two edgeless graphs).
A graph $K_{p,q}$ is a complete bipartite graph with parts of sizes $p$ and $q$, respectively.
A \emph{co-bipartite} graph is the complement of a bipartite graph, that is, a graph whose vertex set can be partitioned into two cliques.
A graph is \emph{acyclic} if it does not contain any cycle. 
\end{sloppypar}

\begin{sloppypar}
The \emph{diamond} is the graph obtained from the complete graph $K_4$ by deleting an edge.
The \emph{house} is the graph with five vertices $a,b,c,d,e$ and the following edges: $ab,bc,cd,de,ae,ad$.
The \emph{butterfly} is the join of $2P_2$ and $P_1$.
The \emph{gem} is the join of graphs $P_1$ and $P_4$. 
The \emph{claw} is the graph $K_{1,3}$.
The \emph{paw} is the graph obtained from the claw by adding to it one edge.
\end{sloppypar}
Given three positive integers $i,j,k$, at most one of which is equal to $1$, we denote by $\Gamma_{i,j,k}$ a graph $G$ consisting of two vertices $a$, $b$ and three paths $P$, $Q$, $R$, each from $a$ to $b$, and otherwise vertex-disjoint, such that the lengths of the paths $P$, $Q$, $R$ are $i$, $j$, and $k$, respectively, and each of the sets $V(P)\cup V(Q)$, $V(P)\cup V(R)$, and $V(Q)\cup V(R)$ induces a hole in $G$.
We will mostly be interested in the case when the vertices $a$ and $b$ are non-adjacent in $\Gamma_{i,j,k}$, that is, when $i,j,k\ge 2$.
Any such graph will be referred to as a \emph{theta}.
Given an integer $k\ge 3$, a \emph{short $k$-theta} (or simply a \emph{$k$-theta}) is a graph obtained as the union of $k$ internally disjoint paths of length $3$ with common endpoints $a$ and $b$.
More precisely, a $k$-theta is a graph $G$ with vertex set $V(G) =\{a, a_1,\ldots, a_k, b,b_1,\ldots, b_k\}$, and its set of edges consisting of the pairs of the following form: $aa_i, bb_i$, and $a_ib_i$ for $1\le i \le k$ (see~\cref{fig:structures}). 
Any graph that is a $k$-theta for some $k\ge 3$ will be referred to as a \emph{short theta}.

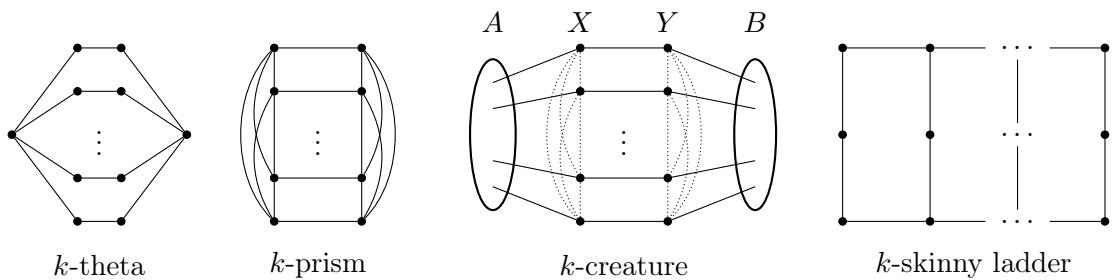
\begin{figure}[H]
\centering
\begin{tikzpicture}[scale=1.15,vertex/.style={inner sep=1pt,draw,circle, fill}]
\begin{scope}[xshift=-2cm]
\node[vertex] (a) at (0,0) {};
\node[vertex] (b) at (2,0) {};
\node[vertex] (x3) at (0.75,-0.5) {};
\node[vertex] (x4) at (0.75,-1) {};
\node[vertex] (x1) at (0.75,0.5) {};
\node[vertex] (x2) at (0.75,1) {};
\node[vertex] (y3) at (1.25,-0.5) {};
\node[vertex] (y4) at (1.25,-1) {};
\node[vertex] (y1) at (1.25,0.5) {};
\node[vertex] (y2) at (1.25,1) {};
\node[] (dots) at (1,0) {$\vdots$};
\node[] (s) at (1,-1.5) {$k$-theta};
\draw[] (a) --  (x1) -- (y1)--(b);
\draw[] (a) --  (x2) -- (y2)--(b);
\draw[] (a) --  (x3) -- (y3)--(b);
\draw[] (a) --  (x4) -- (y4)--(b);
\end{scope}

\begin{scope}[xshift=1cm]
\node[vertex] (x3) at (0,-0.5) {};
\node[vertex] (x4) at (0,-1) {};
\node[vertex] (x2) at (0,0.5) {};
\node[vertex] (x1) at (0,1) {};
\node[vertex] (y3) at (1,-0.5) {};
\node[vertex] (y4) at (1,-1) {};
\node[vertex] (y2) at (1,0.5) {};
\node[vertex] (y1) at (1,1) {};
\draw (x1) .. controls (-0.3,0.5) and (-0.3,0) .. (x3);
\draw (x2) .. controls (-0.3,0) and (-0.3,-0.5) .. (x4);
\draw (x1) .. controls (-0.5,0.5) and (-0.5,-0.5) .. (x4);

\draw (y1) .. controls (1.3,0.5) and (1.3,0) .. (y3);
\draw (y2) .. controls (1.3,0) and (1.3,-0.5) .. (y4);
\draw (y1) .. controls (1.5,0.5) and (1.5,-0.5) .. (y4);

\node[] (dots) at (0.5,0) {$\vdots$};
\node[] (s) at (0.5,-1.5) {$k$-prism};
\draw[] (y4)--(x4)--(x3)--(x2)--  (x1) -- (y1)--(y2)--(y3)--(y4);
\draw[]  (x2) -- (y2);
\draw[]  (x3) -- (y3);
\end{scope}

\begin{scope}[xshift=4.5cm]
\node[vertex] (x3) at (0,-0.5) {};
\node[vertex] (x4) at (0,-1) {};
\node[vertex] (x2) at (0,0.5) {};
\node[vertex] (x1) at (0,1) {};
\node[vertex] (y3) at (1,-0.5) {};
\node[vertex] (y4) at (1,-1) {};
\node[vertex] (y2) at (1,0.5) {};
\node[vertex] (y1) at (1,1) {};
\draw[densely dotted] (x1) .. controls (-0.3,0.5) and (-0.3,0) .. (x3);
\draw[densely dotted] (x2) .. controls (-0.3,0) and (-0.3,-0.5) .. (x4);
\draw[densely dotted] (x1) .. controls (-0.5,0.5) and (-0.5,-0.5) .. (x4);

\draw[densely dotted] (y1) .. controls (1.3,0.5) and (1.3,0) .. (y3);
\draw[densely dotted] (y2) .. controls (1.3,0) and (1.3,-0.5) .. (y4);
\draw[densely dotted] (y1) .. controls (1.5,0.5) and (1.5,-0.5) .. (y4);

\node[] (dots) at (0.5,0) {$\vdots$};
\draw[] (y4)--(x4);
\draw[densely dotted] (x4)--(x3)--(x2)--  (x1);
\draw[] (x1)-- (y1);
\draw[] [densely dotted] (y1)--(y2)--(y3)--(y4);
\draw[]  (x2) -- (y2);
\draw[]  (x3) -- (y3);
\node[] (a) at (-1,1.3) {$A$};
\node[] (a) at (0,1.3) {$X$};
\node[] (a) at (1,1.3) {$Y$};
\node[] (a) at (2,1.3) {$B$};
\node[] (s) at (0.5,-1.5) {$k$-creature};
\node [ellipse, line width=0.8pt, draw=black, minimum width=0.6cm,minimum height=2cm] at (-1,0) {};
\node [ellipse, line width=0.8pt, draw=black, minimum width=0.55cm,minimum height=2cm] at (2,0) {};
\draw[] (x1)--(-1,0.6);
\draw[] (x2)--(-1,0.3);
\draw[] (x3)--(-1,-0.3);
\draw[] (x4)--(-1,-0.6);
\draw[] (y1)--(2,0.6);
\draw[] (y2)--(2,0.3);
\draw[] (y3)--(2,-0.3);
\draw[] (y4)--(2,-0.6);
\end{scope}

\begin{scope}[xshift=7.5cm]
\node[vertex] (r1) at (0,0) {};
\node[vertex] (r2) at (1,0) {};
\node[] (rl) at (2,0) {$\ldots$};
\node[vertex] (rk) at (3,0) {};

\node[vertex] (p1) at (0,1) {};
\node[vertex] (p2) at (1,1) {};
\node[] (pl) at (2,1) {$\ldots$};
\node[vertex] (pk) at (3,1) {};
\node[] (pl1) at (1.75,1) {};
\node[] (pl2) at (2.25,1) {};

\node[vertex] (q1) at (0,-1) {};
\node[vertex] (q2) at (1,-1) {};
\node[] (ql) at (2,-1) {$\ldots$};
\node[vertex] (qk) at (3,-1) {};
\node[] (ql1) at (1.75,-1) {};
\node[] (ql2) at (2.25,-1) {};

\node[] (s) at (1.5,-1.5) {$k$-skinny ladder};
\draw[] (p1) --  (r1) -- (q1);
\draw[] (p2) --  (r2) -- (q2);
\draw[] (pl2)--(pk) --  (rk) -- (qk)--(ql2);
\draw[] (p1) --  (p2)--(pl1);
\draw[] (q1) --  (q2)--(ql1);
\draw[] (pl) --  (rl)--(ql);
\end{scope}

\end{tikzpicture}\caption{Schematic representations of the graphs $k$-theta, $k$-prism, $k$-creature, and $k$-skinny ladder. Dotted edges may or may not exist.}
\label{fig:structures}
\end{figure}

A \emph{prism} is any subdivision of $\overline{C_6}$ in which the two triangles remain unsubdivided; in particular, $\overline{C_6}$ is a prism.
For an integer $k\ge 3$, a \emph{short $k$-prism} (or simply a \emph{$k$-prism}) is a graph whose vertex set can be partitioned into two $k$-vertex cliques, say $A = \{a_1,\dots,a_k\}$ and $B = \{b_1,\dots,b_k\}$, such that for all $i,j \in \{1,\dots,k\}$, $a_i$ is adjacent to $b_j$ if and only if $i = j$. 
Any graph that is a $k$-prism for some $k\ge 3$ will be referred to as a \emph{short prism}.
The class of all short prisms and the class of all short thetas are known to be feral, which implies that they are not tame (see, e.g.,~\cite{MilanicP21, AbrishamiCDTTV22}). 
We state these observations explicitly for later use.

\begin{observation}\label{obs:tame}
\begin{enumerate}
\item\label{obs:complete-prism} The class of all short prisms is feral.
\item\label{obs:theta} The class of all short thetas is feral.
\end{enumerate}
\end{observation}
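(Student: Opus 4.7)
The plan for both parts is to exhibit, in a $k$-prism and a $k$-theta, a family of $2^{\Omega(k)}$ pairwise distinct minimal separators between a fixed pair of non-adjacent vertices; since each graph has $\Theta(k)$ vertices, this will show that both classes are feral with witness constant any $c\in(1,\sqrt{2}\,]$.

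For part~\eqref{obs:theta}, I would work in a $k$-theta with the standard vertex set $\{a,b,a_1,\ldots,a_k,b_1,\ldots,b_k\}$ and let the two fixed non-adjacent vertices be $a$ and $b$. For each subset $I\subseteq\{1,\ldots,k\}$ define
\[
S_I \;=\; \{a_i : i\in I\}\cup\{b_i : i\notin I\}.
\]
The graph is the vertex-disjoint union of the $k$ internally disjoint length-$3$ paths $a\text{-}a_i\text{-}b_i\text{-}b$ (glued at $a$ and $b$), so any $a,b$-path uses one such path entirely; since $S_I$ meets each of them in exactly one internal vertex, $S_I$ separates $a$ from $b$. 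To see minimality, note that removing any vertex $v\in S_I$ unblocks the unique length-$3$ path $a\text{-}a_i\text{-}b_i\text{-}b$ through $v$, because the other internal vertex of that path lies outside $S_I$ by construction. The $2^k$ subsets $I$ thus produce $2^k$ distinct minimal $(a,b)$-separators in a graph on $2k+2$ vertices, which suffices.

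For part~\eqref{obs:complete-prism}, I would work in a $k$-prism with cliques $A=\{a_1,\ldots,a_k\}$ and $B=\{b_1,\ldots,b_k\}$ joined by the perfect matching $\{a_ib_i\}$, and take the non-adjacent pair $(a_1,b_2)$. For each subset $I\subseteq\{3,\ldots,k\}$ let $J=\{3,\ldots,k\}\setminus I$ and define
\[
S_I \;=\; \{b_1,a_2\}\cup\{b_i : i\in I\}\cup\{a_j : j\in J\}.
\]
A direct component analysis in $G-S_I$ shows that the vertices of $A$ surviving in $G-S_I$ are exactly $\{a_1\}\cup\{a_i:i\in I\}$, the vertices of $B$ surviving are exactly $\{b_2\}\cup\{b_j:j\in J\}$, and there are no edges between these two sets (each surviving $a_i$ has its matched $b_i$ deleted, and vice versa), so $a_1$ and $b_2$ lie in different components. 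For minimality, I would check the four types of vertices in $S_I$ by exhibiting short augmenting paths: removing $b_1$ gives $a_1\text{-}b_1\text{-}b_2$; removing $a_2$ gives $a_1\text{-}a_2\text{-}b_2$; removing $b_i$ with $i\in I$ gives $a_1\text{-}a_i\text{-}b_i\text{-}b_2$; removing $a_j$ with $j\in J$ gives $a_1\text{-}a_j\text{-}b_j\text{-}b_2$. Hence each $S_I$ is a minimal $(a_1,b_2)$-separator, yielding $2^{k-2}$ minimal separators in a graph on $2k$ vertices and completing the feralness proof.

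The only delicate point, and the part I would double-check carefully, is the minimality verification in the prism case: unlike in the theta, the cliques $A$ and $B$ provide many alternative routes, so one must argue that every deleted vertex really is the \emph{unique} bottleneck on the corresponding short augmenting path and that no other residual edge allows a shorter bypass. The explicit length-$2$ and length-$3$ paths listed above resolve this cleanly because each of them is disjoint from $S_I\setminus\{v\}$ by the definition of $S_I$.
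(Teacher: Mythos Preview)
Your argument is correct in substance. The paper itself does not prove this observation at all; it simply records it as known and points to~\cite{MilanicP21,AbrishamiCDTTV22} for proofs. What you have written is therefore a self-contained verification of a fact the paper takes as folklore, and your constructions (choosing one internal vertex per length-$3$ path in the $k$-theta, and the analogous $\{b_1,a_2\}\cup\{b_i:i\in I\}\cup\{a_j:j\in J\}$ separators in the $k$-prism) are exactly the standard ones.

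One small inaccuracy: the bound you obtain is $2^k$ minimal separators on $2k+2$ vertices for the theta and $2^{k-2}$ on $2k$ vertices for the prism, so the inequality $\#\text{minsep}\ge c^{|V(G)|}$ fails at the endpoint $c=\sqrt{2}$ in both cases (for instance $(\sqrt{2})^{2k+2}=2^{k+1}>2^k$). You should write $c\in(1,\sqrt{2})$ rather than $c\in(1,\sqrt{2}\,]$. This does not affect the proof, since feralness only asks for \emph{some} $c>1$.
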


The following two graph families will also play an important role.
Given an integer $k\ge 1$, a \emph{$k$-creature} is a graph $G$ whose vertex set is a union of pairwise disjoint nonempty vertex sets $V(G)=A\cup B\cup X\cup Y$ such that (i) $A$ and $B$ induce connected subgraphs,
(ii) $A$ is anticomplete to $Y\cup B$ and $B$ is anticomplete to $A\cup X$,
(iii) every $x\in X$ has a neighbor in $A$ and every $y\in Y$ has a neighbor in $B$, 
(iv) $|X|=|Y|=k$ and $X$ and $Y$ can be enumerated as $X=\{x_1,\ldots, x_k\}$, $Y=\{y_1,\ldots, y_k\}$ so that $x_iy_i\in E(G)$ if and only if $i=j$ (see~\cref{fig:structures}).
Given an integer $k\ge 1$, a \emph{$k$-skinny ladder} is a graph consisting of two induced anticomplete paths $P=(p_1,\ldots, p_k)$, $Q=(q_1,\ldots, q_k)$, an independent set $R=\{r_1,\ldots, r_k\}$, and edges $\bigcup_{i = 1}^k\{p_ir_i, q_ir_i\}$ (see~\cref{fig:structures}).

\subsection{Graph operations and containment relations}
Given a graph $G$, its \emph{line graph} is the graph $L(G)$ with vertex set $E(G)$ in which two distinct vertices $e$ and $f$ are adjacent if and only if $e$ and $f$ have a common endpoint as edges in $G$.
If $G$ and $H$ are graphs such that no induced subgraph of $G$ is isomorphic to $H$, we say that $G$ is \emph{$H$-free}.
Given a family $\mathcal{F}$ of graphs, we say that a graph $G$ is \emph{$\mathcal{F}$-free} if no induced subgraph of $G$ is isomorphic to a member of $\mathcal{F}$.
\emph{Contracting} an edge $e = \{u,v\}$ in a graph $G$ is the operation of replacing the vertices $u$ and $v$ in $G$ with a new vertex $w$ that is adjacent precisely to vertices in $(N_G(u)\cup N_G(v))\setminus \{u,v\}$; the resulting graph is denoted by $G/e$.
A \emph{minor} of a graph $G$ is any graph obtained from $G$ by a sequence of vertex deletions, edge deletions, and edge contractions.
An \emph{induced minor} of a graph $G$ is any graph obtained from $G$ by a sequence of vertex deletions and edge contractions.
If a graph $H$ is not isomorphic to any minor of $G$, then $G$ is said to be \emph{$H$-minor-free}.
Similarly, if a graph $H$ is not isomorphic to any induced minor of $G$, then $G$ is said to be \emph{$H$-induced-minor-free}; otherwise, we say that $G$ 
\emph{contains $H$ as an induced minor}.
A \emph{subdivision} of a graph $G$ is obtained by repeated application of the operation `insert a vertex into an edge': replace the edge $uv$ by two edges $uw$ and $wv$, where $w$ is a new vertex.
An \emph{induced topological minor} of a graph $G$ is any graph $H$ such that some subdivision of $H$ is an induced subgraph of $G$.
If a graph $H$ is not isomorphic to any induced topological minor of $G$, then $G$ is said to be \emph{$H$-induced-topological-minor-free}; otherwise, we say that $G$ \emph{contains $H$ as an induced minor} or that $G$ \emph{contains an induced subdivision of $H$}. 

An \emph{induced minor model} of a graph $H$ in a graph $G$ is a collection $\{X_v\}_{v\in V(H)}$ of pairwise disjoint vertex sets $X_v\subseteq V(G)$ such that each induced subgraph $G[X_v]$ is connected, if there is an edge $uv \in E(H)$, then there is an edge between $X_u$ and $X_v$ in $G$, and for each pair of distinct and nonadjacent vertices $u,v\in V(H)$, $u\neq v$, $uv\notin E(H)$, there are no edges between $X_u$ and $X_v$.
It is not difficult to see that a graph $G$ contains a graph $H$ as an induced minor if and only if there is an induced minor model of $H$ in $G$.
If a graph $H$ is an induced minor of a graph $G$, we will sometimes refer to  the graphs $G$ and $H$ as the \emph{host graph} and the \emph{pattern graph}, respectively.

\section{Thin walks in induced minor models}
\label{sec:thin-walks}

In this section we identify a property of induced minor models that can be assumed without loss of generality whenever the pattern graph contains an edge-disjoint collection of particular walks.
A walk $W = (w_1,\ldots, w_k)$ in a graph $H$ is said to be a \emph{thin walk} if all the vertices of $W$ are pairwise distinct, except that possibly $w_1 = w_k$, and for all $i\in \{2,\ldots, k-1\}$, vertex $w_i$ has degree~$2$ in $H$.
Given a thin walk $W = (w_1,\ldots, w_k)$ in a graph $H$, vertices $w_2,\ldots, w_{k-1}$ will be referred to as the \emph{internal vertices} of $W$.

\begin{lemma}\label{lem:simple-induced-minor-models-of-paths}
Let $H$ be a graph, let $W$ be a thin walk in $H$, and let $H$ be an induced minor of a graph $G$.
Then, there exists an induced minor model $\{X_v\}_{v\in V(H)}$ of $H$ in $G$ such that $|X_{w}|=1$ for every internal vertex $w$ of $W$.
\end{lemma}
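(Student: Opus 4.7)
The plan is an extremality argument. Among all induced minor models $\{X_v\}_{v\in V(H)}$ of $H$ in $G$ (a non-empty set by hypothesis), select one that lexicographically minimizes the tuple $(|X_{w_2}|,|X_{w_3}|,\ldots,|X_{w_{k-1}}|)$. I will show that at this minimum, every coordinate equals~$1$, which is exactly the conclusion. Suppose for contradiction this fails, and let $i$ be the smallest index in $\{2,\ldots,k-1\}$ with $|X_{w_i}|\ge 2$. The key use of thinness is that $w_i$ has degree $2$ in $H$ with neighbors $w_{i-1},w_{i+1}$, and hence $X_{w_i}$ has edges in $G$ to no bag other than $X_{w_{i-1}}$ and $X_{w_{i+1}}$. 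Let $A_i$ (resp.\ $B_i$) denote the non-empty set of vertices of $X_{w_i}$ having a neighbor in $X_{w_{i-1}}$ (resp.\ $X_{w_{i+1}}$).

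If $A_i\cap B_i$ contains some vertex $v$, I replace $X_{w_i}$ by $\{v\}$: the result is a valid induced minor model and the tuple is lex-smaller, contradicting minimality. Otherwise $A_i\cap B_i=\emptyset$, and I take a shortest path $P=(p_1,\ldots,p_\ell)$ in $G[X_{w_i}]$ with $p_1\in A_i$ and $p_\ell\in B_i$, necessarily with $\ell\ge 2$. I then modify the model by setting $X_{w_i}':=\{p_1,\ldots,p_{\ell-1}\}$ and $X_{w_{i+1}}':=X_{w_{i+1}}\cup\{p_\ell\}$, leaving all other bags unchanged. The first new bag is connected, being a path; the second remains connected since $p_\ell\in B_i$ has a neighbor in the old $X_{w_{i+1}}$. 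The edge from $X_{w_i}'$ to $X_{w_{i-1}}$ is witnessed by $p_1\in A_i$, and $p_{\ell-1}p_\ell$ is an edge between $X_{w_i}'$ and $X_{w_{i+1}}'$. Since $|X_{w_i}'|=\ell-1<|X_{w_i}|$ while the smaller-indexed coordinates stay put, the tuple becomes lex-smaller, a contradiction.

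The main obstacle is to verify that adding $p_\ell$ to $X_{w_{i+1}}$ introduces no spurious inter-bag edges, and this is precisely where the hypothesis $A_i\cap B_i=\emptyset$ becomes essential: since $p_\ell\in B_i\setminus A_i$, the vertex $p_\ell$ has no neighbor in $X_{w_{i-1}}$, and combined with $w_i$ having degree $2$ in $H$, every neighbor of $p_\ell$ in $G$ lies in $X_{w_i}\cup X_{w_{i+1}}=X_{w_i}'\cup X_{w_{i+1}}'$, so no new bag-to-bag edges can appear. The closed case $w_1=w_k$ is handled by the same argument with the convention $X_{w_k}=X_{w_1}$ when $i=k-1$; the walk edge $w_{k-1}w_k=w_{k-1}w_1$ places $w_{k-1}$ in $N_H(w_1)$, so the edges that $X_{w_1}$ gains upon absorbing $p_\ell$ are all admissible.
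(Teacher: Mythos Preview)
Your proof is correct. The paper takes a different route: it fixes an arbitrary model, constructs a single shortest ``$W$-monotone'' walk in $G$ threading through $X_{w_1},X_{w_2},\ldots,X_{w_k}$ in order, and uses the consecutive vertices of that walk to replace all internal bags by singletons in one shot, dumping the leftover vertices into $X_{w_k}$. Your extremality argument is arguably more elementary for a single walk; the paper's global construction, on the other hand, has the feature (exploited in the follow-up proposition on several edge-disjoint thin walks) that the union $\bigcup_{j=2}^{k-1}X_{w_j}$ only shrinks under the modification, whereas your Case~2 enlarges $X_{w_{i+1}}$, so extending your approach to the multi-walk setting would require additional care.

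One minor caveat: your assertion that the neighbors of $w_i$ are exactly $w_{i-1}$ and $w_{i+1}$ tacitly uses $w_{i-1}\ne w_{i+1}$, which fails for the degenerate closed walk $(w_1,w_2,w_1)$ of length two. There the second neighbor of $w_2$ may lie outside the walk, and replacing $X_{w_2}$ by a singleton in $A_2\cap B_2=A_2$ need not preserve the edge to that neighbor's bag. This is easily patched (redirect to the open walk $(w_1,w_2,x)$ where $x$ is the other neighbor), and the paper's own argument has the same blind spot.
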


\begin{proof}
Let $\{X_v\}_{v\in V(H)}$ be an induced minor model of $H$ in $G$. 
Let $W= (w_1,\ldots, w_k)$. 
For simplicity, write $X_i$ for $X_{w_i}$ for all $i\in \{1,\ldots, k\}$.
Note that the sets $X_1,\ldots, X_k$ are nonempty and pairwise disjoint, except that $X_1 = X_k$ if $W$ is a closed walk.
If $k = 2$, there is nothing to show, so we assume that $k\ge 3$.
Consider the following particular way of constructing a walk in $G$ from $X_1$ to $X_k$.
For all $i\in \{1,\ldots, k-1\}$, let $u_iv_{i+1}$ be an edge in $G$ such that $u_i\in X_i$ and $v_{i+1}\in X_{i+1}$.
Moreover, for all $i\in \{2,\ldots, k-1\}$, let $P^i$ be a $v_i{,}u_i$-path in $G[X_i]$.
Then, the following concatenation of walks 
\[(u_1,v_2)\oplus\bigoplus_{i = 2}^{k-1} \left (P^{i}\oplus (u_{i},v_{i+1})\right)\]
is a walk in $G$ from $X_1$ to $X_k$ with all vertices pairwise distinct, except that the endpoints might coincide. 
Any walk that can be obtained by the above procedure will be referred to as a \hbox{\emph{$W$-monotone}} walk in $G$.

Let $Z=(z_1,\ldots, z_r)$ be a shortest $W$-monotone walk in $G$.
Note that the vertices $z_2,\ldots, z_{r-1}$ all belong to $\bigcup_{i=2}^{k-1}X_i$.
The minimality of $Z$ and the fact that all internal vertices of $W$ have degree~$2$ in $H$ imply that the vertices $z_2,\ldots, z_{r-1}$ have degree $2$ in the subgraph of $G$ induced by $\{z_1,\ldots, z_r\}$.
We now modify $\{X_v\}_{v\in V(H)}$ to obtain an alternative induced minor model $\{Y_v\}_{v\in V(H)}$ of $H$ in $G$, as follows:
\begin{itemize}
    \item $Y_v = X_v$ for all $v\in V(H)\setminus \{w_1,\ldots, w_{k}\}$, 
    \item $Y_{w_i}=\{z_{i}\}$ for all $i\in \{2,\ldots, k-1\}$, 
    \item $Y_{w_k}= X_k\cup \{z_i\colon k\le i< r\}$, and 
    \item if $w_1\neq w_k$, then $Y_{w_1} = X_1$.\footnote{Note that if $w_1 = w_k$, then $Y_{w_1} = Y_{w_k}$.}
\end{itemize}
Note that $|Y_{w_i}|=1$ for all $i\in \{2,\ldots, k-1\}$ and for each $v\in V(H)$, the subgraph of $G$ induced by $Y_v$ is connected.
Furthermore, the aforementioned properties of the subgraph of $G$ induced by $\{z_1,\ldots, z_r\}$ imply that $\{Y_v\}_{v\in V(H)}$ is indeed an induced minor model of $H$ in $G$.
\end{proof}

A repeated application of \Cref{lem:simple-induced-minor-models-of-paths} and its proof leads to the following.
 
\begin{proposition}
\label{prop:simple-induced-minor-models-of-all-paths}
Let $H$ be a graph, $\mathcal{W}$ be a set of edge-disjoint thin walks in $H$, and $U$ be the set of all internal vertices of walks in $\mathcal{W}$.
Let $H$ be an induced minor of a graph $G$.
Then, there exists an induced minor model $\{X_v\}_{v\in V(H)}$ of $H$ in $G$ such that $|X_u|=1$ for all $u\in U$. 
\end{proposition}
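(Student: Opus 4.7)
The plan is to prove the proposition by induction on $|\mathcal{W}|$, with \Cref{lem:simple-induced-minor-models-of-paths} playing the role of the inductive step. In the base case $|\mathcal{W}|=0$, we have $U=\emptyset$, so any induced minor model of $H$ in $G$ (which exists by hypothesis) works.

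Before running the induction, I would establish the following noninterference observation: if $v$ is an internal vertex of some walk $W\in\mathcal{W}$, then $v$ does not appear on any other walk $W'\in\mathcal{W}$. (We may assume every walk in $\mathcal{W}$ has length at least $1$, since walks of length $0$ have no internal vertices and can be discarded from $\mathcal{W}$ without changing $U$.) Indeed, $v$ has degree exactly $2$ in $H$, and both edges of $H$ incident to $v$ lie on $W$; any walk $W'\ne W$ passing through $v$ would have to use at least one of those two edges, contradicting the assumption that the walks in $\mathcal{W}$ are pairwise edge-disjoint.

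For the inductive step, I would pick any walk $W\in\mathcal{W}$, apply the inductive hypothesis to $\mathcal{W}\setminus\{W\}$ to obtain an induced minor model $\{X_v\}_{v\in V(H)}$ of $H$ in $G$ with $|X_u|=1$ for every internal vertex $u$ of some walk in $\mathcal{W}\setminus\{W\}$, and then apply \Cref{lem:simple-induced-minor-models-of-paths} to this model with the walk $W$. This produces an induced minor model $\{Y_v\}_{v\in V(H)}$ with $|Y_w|=1$ for every internal vertex $w$ of $W$. Inspecting the proof of \Cref{lem:simple-induced-minor-models-of-paths}, the part $Y_v$ differs from $X_v$ only when $v\in V(W)$; by the observation above, no internal vertex of any walk in $\mathcal{W}\setminus\{W\}$ lies in $V(W)$, so the singleton parts achieved in the previous step are preserved. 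Hence $\{Y_v\}_{v\in V(H)}$ is a singleton at every vertex of $U$, which completes the induction.

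The only nontrivial point is precisely the noninterference claim above, ensuring that reshaping the model along $W$ cannot destroy singleton parts already secured along previously processed walks; the degree-$2$ plus edge-disjointness argument resolves it cleanly, and no further obstacle is anticipated.
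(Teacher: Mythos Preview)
Your proposal is correct and takes essentially the same approach as the paper: both iterate the construction from \Cref{lem:simple-induced-minor-models-of-paths} over the walks in $\mathcal{W}$, relying on the fact that an internal vertex of one walk cannot lie on any other walk in $\mathcal{W}$ (which you justify explicitly via the degree-$2$/edge-disjointness argument, while the paper only states it). Your induction is just a formalization of the paper's ``apply the construction to all walks in $\mathcal{W}$ in any order'', and your observation that $Y_v\neq X_v$ only for $v\in V(W)$ is exactly the noninterference property the paper extracts from the proof of \Cref{lem:simple-induced-minor-models-of-paths}.
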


\begin{proof}
Let $\{X_v\}_{v\in V(H)}$ be an induced minor model of $H$ in $G$. 
Consider a walk $W= (w_1,\ldots, w_k)$ in $\mathcal{W}$.
Modifying the induced minor model $\{X_v\}_{v\in V(H)}$ as described in the proof of \Cref{lem:simple-induced-minor-models-of-paths} yields an induced minor model $\{Y_v\}_{v\in V(H)}$ of $H$ in $G$ such that $|Y_u|=1$ for every internal vertex $u$ in $W$ and, moreover, $\bigcup_{i = 2}^{k-1}Y_{p_i}\subseteq \bigcup_{i = 2}^{k-1}X_{p_i}$.
In particular, for any walk $W'\in \mathcal{W}$ such that $W'\neq W$ this procedure does not interfere with sets $X_u$ for internal vertices $u$ of $W$.
Thus, since no internal vertex of any walk $\mathcal{W}$ belongs to any other walk in $\mathcal{W}$, we can apply the construction to all walks in $\mathcal{W}$ in any order, resulting in an induced minor model $\{Z_v\}_{v\in V(H)}$ of $H$ in $G$ such that $|Z_u|=1$ for all $u\in U$. 
\end{proof}

\section{Sufficient conditions for tameness}
\label{sec:sufficient-conditions}

We show that the conditions regarding the forbidden induced minor or induced topological minor $H$ stated in \Cref{thm:dichotomy-induced-minor,thm:dichotomy-induced-subdivision}, respectively, are sufficient for tameness. 
More precisely, we show that the classes of graphs excluding the butterfly as an induced minor or the house as an induced topological minor are tame.
To this end, we make use of the following result by Gajarsk{\'y}~et~al.~\cite{gajarsky2022taming}.

\begin{theorem}[Theorem 3 in \cite{gajarsky2022taming}]
\label{thm:k-creature-free-tame}
    For every positive integer $k$ there exists a polynomial $p$ of degree $\mathcal{O}(k^3\cdot(8k^2)^{k+2})$ such that every graph $G$ that is $k$-creature-free and does not contain a $k$-skinny ladder as an induced minor contains at most $p(|V(G)|)$ minimal separators.
\end{theorem}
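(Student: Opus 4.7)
The plan is to establish the contrapositive: show that any graph $G$ with more than $p(|V(G)|)$ minimal separators must contain either a $k$-creature as an induced subgraph or a $k$-skinny ladder as an induced minor. I would begin by partitioning the minimal separators of $G$ according to the ordered pairs $(a,b)$ of nonadjacent vertices that they separate. An averaging argument then yields a single pair $(a,b)$ and a family $\mathcal{S}$ of minimal $(a,b)$-separators whose size is still super-polynomial in $n$. Before iterating, I would perform an ``uncrossing'' step, using the sub-lattice structure of minimal $(a,b)$-separators, to replace $\mathcal{S}$ with a linearly ordered subfamily $S_1 \prec S_2 \prec \cdots \prec S_\ell$ (ordered, say, by which one is closer to $a$).

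The heart of the argument would be extracting from this chain a long sub-sequence together with a family of ``witness'' vertices exhibiting a uniform crossing pattern. For this I would run an iterated pigeonhole/Ramsey procedure: at each step, select a candidate witness vertex $v$ and refine the subfamily by fixing the adjacency type of $v$ with previously chosen witnesses on the $a$-side and the $b$-side. Each refinement costs a polynomial factor in $n$, accounting for the $(8k^2)$ factor over $k+2$ rounds, and the outer $k^3$ in the exponent comes from a linear sweep over the $\ell = \Theta(k)$ positions and their pairwise type choices. After sufficiently many refinements, the surviving separators and witnesses form a highly homogeneous ``indexed'' configuration.

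From the refined configuration, the proof splits into a dichotomy. Either the witness vertices on the $a$-side and on the $b$-side form two induced matchings $x_iy_i$ that are otherwise anticomplete to each other — in which case, together with the connected components of $G - S_1$ on the $a$-side and of $G - S_\ell$ on the $b$-side, they realize a $k$-creature with sets $A$, $X$, $Y$, $B$ — or the homogeneity forces adjacencies that string consecutive witnesses into two parallel induced paths $P$ and $Q$, anticomplete to each other, with auxiliary separator vertices serving as the independent ``rungs'' $r_i$ of a $k$-skinny ladder induced minor model. The key constraints to verify are the strict matching property $x_i y_j \in E(G) \iff i = j$ and the anticompleteness of $P$ and $Q$, both of which should follow from the homogeneity achieved during the Ramsey refinement.

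The main obstacle I anticipate is the delicate bookkeeping needed to preserve all \emph{non}-adjacencies simultaneously while keeping the polynomial degree at the claimed $\mathcal{O}(k^3 \cdot (8k^2)^{k+2})$: any sloppy step in the iterated pigeonhole blows the exponent up unacceptably. A secondary obstacle is that the witnesses extracted by Ramsey-type arguments naturally give \emph{subgraph} copies of the forbidden structures, whereas we need \emph{induced} copies (for the creature) and \emph{induced minor} copies (for the skinny ladder); bridging this gap requires carefully contracting the parts of each $S_i$ that do not serve as witnesses into the appropriate branch sets, and verifying that no unwanted edge survives between the branch sets. I expect this to be where most of the technical work of Gajarský et al.\ concentrates.
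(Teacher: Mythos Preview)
The paper does not prove this theorem at all: it is quoted verbatim as Theorem~3 of Gajarsk\'y et al.~\cite{gajarsky2022taming} and used as a black box (see the sentence ``To this end, we make use of the following result by Gajarsk\'y et al.'' immediately preceding the statement). There is therefore no proof in the paper to compare your proposal against.

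Your sketch is a plausible high-level outline of the kind of argument one expects in \cite{gajarsky2022taming}---chain of minimal $(a,b)$-separators, iterated refinement, then a dichotomy between a $k$-creature and a $k$-skinny ladder---but none of that belongs in the present paper, and I cannot certify the details (in particular the exact bookkeeping that yields the exponent $\mathcal{O}(k^3\cdot(8k^2)^{k+2})$) from what is written here. If your goal is to reproduce the proof, you should consult \cite{gajarsky2022taming} directly; for the purposes of this paper, the correct ``proof'' is simply a citation.
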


By~\Cref{thm:k-creature-free-tame}, in order to prove that a graph class $\G$ is tame, it suffices to prove that there exists an integer $k$ such that every graph in $\G$ is $k$-creature-free and does not contain $k$-skinny ladder as an induced minor.
We show that if $\G$ is the class of graphs excluding the butterfly as an induced minor or the class of graphs excluding the house as an induced topological minor, then the above condition is satisfied with $k = 3$.

We start with the easy case of graphs excluding a butterfly as an induced minor. 

\begin{lemma}
\label{lem:but-ind-minor-free-creature}
Every butterfly-induced-minor-free graph is $3$-creature-free.
\end{lemma}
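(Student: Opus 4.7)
The plan is to prove the contrapositive: every graph $G$ containing a $3$-creature contains the butterfly as an induced minor. Fix a $3$-creature in $G$ with parts $A, B, X = \{x_1, x_2, x_3\}, Y = \{y_1, y_2, y_3\}$ as in the definition. Viewing the butterfly as the join of a central vertex $v_0$ with a $2K_2$ on $\{u_1u_2, w_1w_2\}$, I would construct an induced minor model $\{X_{v_0}, X_{u_1}, X_{u_2}, X_{w_1}, X_{w_2}\}$ in which the four non-central branch sets induce a $2K_2$ in $G$ and $X_{v_0}$ is a connected subgraph dominating them.

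The construction splits into two cases according to the structure of $G[X]$ and $G[Y]$. In the \emph{generic case}, there exist distinct indices $i, j \in \{1,2,3\}$ with $x_ix_j \notin E(G)$ and $y_iy_j \notin E(G)$; let $k$ be the remaining index. I would set
\[
X_{u_1} = \{x_i\},\ X_{u_2} = \{y_i\},\ X_{w_1} = \{x_j\},\ X_{w_2} = \{y_j\},\ X_{v_0} = S_A \cup \{x_k, y_k\} \cup S_B,
\]
where $S_A \subseteq A$ is a connected subgraph of $G[A]$ containing a neighbor (in $A$) of each of $x_i, x_j, x_k$, and $S_B \subseteq B$ is analogously defined; such $S_A, S_B$ exist by the connectedness of $G[A], G[B]$ together with condition~(iii) of the creature's definition. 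The matching edges $x_iy_i$ and $x_jy_j$ supply the $2K_2$ edges, the anticompleteness clauses $A \perp B \cup Y$ and $B \perp A \cup X$ together with the matching condition $x_ry_s \in E \Leftrightarrow r = s$ exclude every other edge among the non-central branch sets, and the bridge $x_ky_k$ together with $S_A, S_B$ makes $X_{v_0}$ connected and adjacent to each non-central set.

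The \emph{special case} is when, for every pair $\{i, j\} \subseteq \{1,2,3\}$, $x_ix_j \in E(G)$ or $y_iy_j \in E(G)$; this forces at least one of $G[X], G[Y]$ to have two or more edges. Here I would pick $a \in A \cap N(x_1)$ and $b \in B \cap N(y_2)$ and set $X_{u_1} = \{a\}, X_{u_2} = \{x_1\}, X_{w_1} = \{b\}, X_{w_2} = \{y_2\}$; these singletons immediately form an induced $2K_2$ with edges $ax_1, by_2$, using the anticompleteness conditions together with the matching condition (applied with $1\ne 2$). For the central set I would take $X_{v_0}$ to combine the matching partners $x_2$ (dominating $y_2$) and $y_1$ (dominating $x_1$), the spare matching pair $(x_3, y_3)$ used as a bridge, together with connected subgraphs of $G[A]$ and $G[B]$ providing neighbors of $a$ and $b$ and ensuring connectedness.

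The hard part will be checking that $X_{v_0}$ is genuinely connected and dominates all four non-central sets in the degenerate configurations of the special case (for example, when $|A|=1$ or $|B|=1$, or when $G[X]$ equals $K_3$ while $G[Y]$ is edgeless). These are disposed of by a short sub-case analysis keyed to the structure of $G[X]$ and $G[Y]$: the extra intra-$X$ or intra-$Y$ edges guaranteed by the case hypothesis provide the required shortcuts---for instance, if $G[X]=K_3$ then $x_2,x_3$ can simultaneously dominate $a$ and $x_1$ without needing $y_1$, and a symmetric route works when $G[Y]=K_3$ or when either of $G[X], G[Y]$ is a $P_3$. Once the induced minor model is verified in both cases, contracting each branch set to a single vertex yields the butterfly as an induced minor of $G$, proving the lemma.
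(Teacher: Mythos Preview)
Your generic case is exactly the paper's argument: contract $A\cup B\cup\{x_k,y_k\}$ to one vertex and read off the butterfly on $\{x_i,y_i,x_j,y_j\}$. The paper's proof does only this (with $(i,j,k)=(1,3,2)$) and stops.

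Your proposal is therefore not merely correct but more careful than the paper's. The definition of a $k$-creature in the paper (and the accompanying figure with its dotted edges) allows arbitrary edges inside $X$ and inside $Y$, yet the paper's one-line proof tacitly assumes $x_1x_3\notin E$ and $y_1y_3\notin E$. If, say, $G[X]=K_3$ and $G[Y]$ is edgeless, then contracting $A\cup B\cup\{x_2,y_2\}$ yields the gem rather than the butterfly, and no relabelling of the three indices rescues the argument---one must bring a vertex of $A$ or $B$ into the $2K_2$, exactly as your special case does.

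For the special case, a slightly cleaner finish than the one you sketch is to first contract $A$ and $B$ to single vertices $a,b$ (so $a$ is adjacent to every $x_i$ and $b$ to every $y_j$), relabel so that $x_1x_2,x_1x_3\in E(G)$ (possible once one of $G[X],G[Y]$ carries two of the three required edges), take the $2K_2$ to be $\{a,x_2\}\cup\{b,y_3\}$, and take the central branch set to be $\{x_1,x_3,y_1\}$. This set is connected via $x_1x_3$ and $x_1y_1$, and it dominates $a$ and $x_2$ through $x_1$, $y_3$ through $x_3$, and $b$ through $y_1$. This sidesteps the bookkeeping with $A\setminus\{a\}$ and $B\setminus\{b\}$ that your sketch alludes to.
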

\begin{proof}
Let $G$ be a butterfly-induced-minor-free graph.
Suppose for a contradiction that there exists a $3$-creature $H=(A,B,X,Y)$ in $G$, with $X=\{x_1,x_2,x_3\}$ and $Y=\{y_1,y_2,y_3\}$.
Note that each vertex in $X=\{x_1,x_2,x_3\}$ has a neighbor in $A$, and each vertex in $Y= \{y_1,y_2,y_3\}$ has a neighbor in $B$.
Contracting $A\cup B\cup \{x_2,y_2\}$ in $H$ to a single vertex $x$ gives a butterfly graph induced by vertices $x,x_1,y_1,x_3,y_3$, a contradiction.
\end{proof}

\begin{lemma}
\label{lem:but-ind-minor-free-ladder}
Let $G$ be a butterfly-induced-minor-free graph.
Then, $G$ does not contain the $3$-skinny ladder as an induced minor.
\end{lemma}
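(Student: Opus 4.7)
The plan is straightforward: since induced-minor containment is transitive, it suffices to show that the butterfly is itself an induced minor of the $3$-skinny ladder. If this holds and $G$ contained the $3$-skinny ladder as an induced minor, then $G$ would contain the butterfly as an induced minor, contradicting the hypothesis.

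First I would fix notation: let the $3$-skinny ladder have paths $P = (p_1,p_2,p_3)$ and $Q = (q_1,q_2,q_3)$, anticomplete to each other, and the independent set $R = \{r_1,r_2,r_3\}$ with edges $p_ir_i$ and $q_ir_i$ for $i \in \{1,2,3\}$. Writing the butterfly with central vertex $c$ and two triangles $\{c,a_1,a_2\}$, $\{c,b_1,b_2\}$, I propose the following induced minor model:
\begin{align*}
X_c &= \{p_2, r_2, q_2\}, \\
X_{a_1} &= \{p_1, r_1\}, \quad X_{a_2} = \{q_1\}, \\
X_{b_1} &= \{p_3, r_3\}, \quad X_{b_2} = \{q_3\}.
\end{align*}
The bags are pairwise disjoint, and each is connected: $X_c$ induces the path $p_2{-}r_2{-}q_2$, while $X_{a_1}$ and $X_{b_1}$ are the edges $p_1r_1$ and $p_3r_3$, respectively.

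Next I would verify that precisely the edges of the butterfly are realized between the bags. The edges $cx$ for $x\in\{a_1,a_2,b_1,b_2\}$ are witnessed by $p_2p_1$, $q_2q_1$, $p_2p_3$, $q_2q_3$ respectively; the triangle edges $a_1a_2$ and $b_1b_2$ are witnessed by $r_1q_1$ and $r_3q_3$. Finally, none of the four pairs $(a_i,b_j)$ have edges between their bags: since $P$ and $Q$ are anticomplete to each other, and since each $r_i$ has its only neighbors in $\{p_i,q_i\}$, one checks that no pair drawn from $\{p_1,r_1,q_1\}$ and $\{p_3,r_3,q_3\}$ is adjacent.

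Having exhibited such a model, the conclusion is immediate: the butterfly is an induced minor of the $3$-skinny ladder, hence of $G$, contradicting the assumption on $G$. There is no real obstacle here; the only thing to be careful about is the case check for non-adjacencies between $\{p_1,r_1,q_1\}$ and $\{p_3,r_3,q_3\}$, which relies crucially on the $r_i$'s being pairwise non-adjacent (they form an independent set) and on $P,Q$ being anticomplete.
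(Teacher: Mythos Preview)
Your proof is correct and follows essentially the same idea as the paper: exhibit the butterfly as an induced minor of the $3$-skinny ladder. The paper argues inside $G$ via an induced minor model (invoking \Cref{prop:simple-induced-minor-models-of-all-paths} to shrink most bags to singletons, then contracting $X_{q_2}\cup X_{p_2}\cup\{p_1,p_3,r_2\}$ to the center), whereas you work directly in the $3$-skinny ladder and then invoke transitivity of the induced minor relation; your route is slightly cleaner in that it sidesteps the need for \Cref{prop:simple-induced-minor-models-of-all-paths}.
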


\begin{proof}
Suppose for a contradiction that the $3$-skinny ladder graph $H$ is an induced minor of $G$ and let $\{X_v\}_{v\in V(H)}$ be an induced minor model of $H$ in $G$. 
Let $p_1,p_2,p_3,q_1,q_2,q_3,r_1,r_2,r_3$ be the vertices of $H$, as in the definition of $k$-skinny ladder.
By \Cref{prop:simple-induced-minor-models-of-all-paths} we may assume that $X_{p_i}=\{p_i\}$ and $X_{q_i}=\{q_i\}$ for $i\in \{1,3\}$ and $X_{r_j}=\{r_j\}$ for all $j\in \{1,2,3\}$. 
Contracting the set $X_{q_2}\cup X_{p_2}\cup \{p_1,p_3,r_2\}$ into a single vertex $x$ results in a butterfly graph induced by $\{x,r_1,q_1,r_3,q_3\}$, a contradiction.
\end{proof}

\Cref{lem:but-ind-minor-free-creature,lem:but-ind-minor-free-ladder,thm:k-creature-free-tame} immediately imply the following.

\begin{theorem}\label{thm:butterfly-free-tame}
    The class of butterfly-induced-minor-free graphs is tame.
\end{theorem}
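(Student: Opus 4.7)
The plan is to deduce the theorem in essentially one line from \Cref{thm:k-creature-free-tame} by specializing to the parameter $k=3$. Recall that \Cref{thm:k-creature-free-tame} asserts that for every positive integer $k$ there exists a polynomial $p$ bounding the number of minimal separators in any graph that is simultaneously $k$-creature-free and excludes the $k$-skinny ladder as an induced minor. With $k=3$, the two hypotheses of that theorem coincide exactly with the conclusions of the preceding two lemmas: \Cref{lem:but-ind-minor-free-creature} supplies $3$-creature-freeness, and \Cref{lem:but-ind-minor-free-ladder} rules out the $3$-skinny ladder as an induced minor. Hence every butterfly-induced-minor-free graph $G$ satisfies both preconditions simultaneously, so \Cref{thm:k-creature-free-tame} yields a polynomial $p$ (independent of $G$) such that $G$ has at most $p(|V(G)|)$ minimal separators. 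Since this polynomial bound holds uniformly over the class, the class of butterfly-induced-minor-free graphs is tame by definition.

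There is no real obstacle remaining at this stage: all the combinatorial work has already been carried out. The key structural insight was to identify $k=3$ as the smallest parameter for which the presence of a $k$-creature or an induced minor $k$-skinny ladder can be collapsed, via a single edge contraction of the central ``bridge'' (namely $A \cup B \cup \{x_2,y_2\}$ in the creature, and $X_{q_2} \cup X_{p_2} \cup \{p_1,p_3,r_2\}$ in the ladder), into the central apex vertex of a butterfly whose four remaining vertices are already available as two disjoint edges. Once this value of $k$ is fixed and the two contractions verified in \Cref{lem:but-ind-minor-free-creature,lem:but-ind-minor-free-ladder}, the theorem follows as an immediate corollary. I would therefore write the proof as a single sentence invoking the three cited results, noting for the reader that the resulting polynomial degree is the one obtained from \Cref{thm:k-creature-free-tame} when $k=3$.
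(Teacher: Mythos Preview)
Your proposal is correct and follows exactly the same approach as the paper: the theorem is stated as an immediate consequence of \Cref{lem:but-ind-minor-free-creature,lem:but-ind-minor-free-ladder,thm:k-creature-free-tame} with $k=3$. The additional commentary you provide about the contractions is accurate but unnecessary for the proof itself, which the paper presents in a single sentence.
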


To derive a similar conclusion for the class of graphs excluding a house as an induced topological minor, we build on a recent work of Dallard~et~al.~\cite{dallard2024detecting} who studied the class of $K_{2,3}$-induced-minor-free graphs and obtained a polynomial-time recognition algorithm for graphs in the class. 
As part of their approach, they described a family $\mathcal{F}$ of graphs such that a graph $G$ contains $K_{2,3}$ as an induced minor if and only if $G$ contains a member of $\mathcal{F}$ as an induced subgraph.
We omit the exact description of the family $\mathcal{F}$, as we will not need it; however, a lemma from Dallard~et~al.~\cite{dallard2024detecting} used in their proof will be useful for us.

To explain the lemma, we need to introduce three more specific graph classes (see \cref{fig:S-T-M}).
Let $\mathcal{S}$ be the class of subdivisions of the claw.
The class $\mathcal{T}$ is the class of graphs that can be obtained from three paths of length at least one by selecting one endpoint of each path and adding three edges between those endpoints so as to create a triangle. 
The class $\mathcal{M}$ is the class of graphs $H$ that consist of a path $P$ and a vertex $a$, called the \emph{center} of $H$, such that $a$ is nonadjacent to the endpoints of $P$ and $a$ has at least two neighbors in $P$.
Given a graph $H \in \mathcal{S}\cup \mathcal{T}\cup \mathcal{M}$, the \emph{extremities} of $H$ are the vertices of degree one as well as the center of $H$ in case $H \in \mathcal{M}$. 
Observe that any graph $H \in \mathcal{S}\cup \mathcal{T}\cup \mathcal{M}$ has exactly three extremities.
\medskip

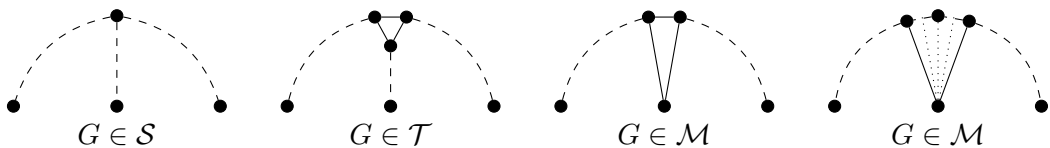
\begin{figure}[htb]
    \centering
    \begin{tikzpicture}[scale=0.8]
        \tikzset{every node/.style={draw,circle,fill=black,inner sep=0pt,minimum size=4.5pt}}
        \tikzset{decoration={amplitude=.5mm,segment length=2.25mm,post length=0mm,pre length=0mm}}
        \begin{scope}
            \node (b) at (1.7,0) {};
            \node (c) at (0,0) {};
            \node (d) at (-1.7,0) {};
            \node (a) at (0,1.5) {}; 
            \draw[dashed, bend left] (a) to (b);
            \draw[dashed] (a) to (c);
            \draw[dashed, bend right] (a) to (d);
            \node[rectangle,draw=none,fill=none] at (0,-.5) {$\strut G \in \mathcal{S}$};
        \end{scope}
        
        \begin{scope}[xshift=4.5cm]
            \node (b) at (1.7,0) {};
            \node (c) at (0,0) {};
            \node (d) at (-1.7,0) {};
            \node (ac) at (90:1) {};
            \node (ab) at (80:1.5) {};
            \node (ad) at (100:1.5) {};
            \draw (ac) to (ab)
                    (ac) to (ad)
                    (ad) to (ab);
            
            \draw[dashed, bend left] (ab) to (b);
            \draw[dashed] (ac) to (c);
            \draw[dashed, bend right] (ad) to (d);

            \node[rectangle,draw=none,fill=none] at (0,-.5) {$\strut G \in \mathcal{T}$};
        \end{scope}
        \begin{scope}[xshift=9cm]
            \node (b) at (1.7,0) {};
            \node (c) at (0,0) {};
            \node (d) at (-1.7,0) {};
            \node (ab) at (80:1.5) {};
            \node (ad) at (100:1.5) {};
            
            \draw (ab) to (ad);
            
            \draw[dashed, bend left] (ab) to (b);
            \draw (c) to (ab)
                    (c) to (ad);
            \draw[dashed, bend right] (ad) to (d);

            \node[rectangle,draw=none,fill=none] at (0,-.5) {$\strut G \in \mathcal{M}$};
        \end{scope}
        \begin{scope}[xshift=13.5cm]
            \node (b) at (1.7,0) {};
            \node (c) at (0,0) {};
            \node (d) at (-1.7,0) {};
            \node (ac) at (90:1.5) {};
            \node (ab) at (70:1.5) {};
            \node (ad) at (110:1.5) {};
            
            \draw[dashed] (ab) to (ac)
                                (ac) to (ad);
            \draw[dashed, bend left] (ab) to (b);
            \draw (c) to (ab)
                    (c) to (ad);
            \draw[dashed, bend right] (ad) to (d);
            \draw[dotted] (ac) to (c);
            \coordinate (adc) at (80:1.5);
            \coordinate (abc) at (100:1.5);
            \draw[dotted] (c) to (adc)
                        (c) to (abc);
            \node[rectangle,draw=none,fill=none] at (0,-.5) {$\strut G \in \mathcal{M}$};
        \end{scope}
    \end{tikzpicture}
    \caption{A schematic representation of graphs in $\mathcal{S}$, $\mathcal{T}$, and $\mathcal{M}$. 
    Dashed edges represent paths of positive length. 
    Dotted edges may be present or not. 
    The figure is adapted from~\cite{dallard2024detecting}.}
    \label{fig:S-T-M}
\end{figure}

\begin{lemma}[Dallard et al.~\cite{dallard2024detecting}]\label{extremities}
Let $G$ be a graph and $I$ be an independent set in $G$ with $|I| = 3$.
If there exists a connected component $C$ of $G -I$ such that $I\subseteq N(C)$, then there exists an induced subgraph $H$ of $G[N[C]]$ such
that $H \in \mathcal{S}\cup \mathcal{T}\cup \mathcal{M}$ and $I$ is exactly the set of extremities of $H$.
\end{lemma}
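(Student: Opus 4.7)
The plan is to reduce the problem to analyzing a minimum-sized connected subgraph of $C$ that dominates $I$, and to extract the desired substructure via case analysis. Concretely, I would choose $S \subseteq V(C)$ of minimum cardinality such that $G[S]$ is connected and every vertex of $I$ has at least one neighbor in $S$; such a set exists because $V(C)$ itself qualifies. Setting $H_0 := G[S \cup I]$ yields a connected induced subgraph of $G[N[C]]$ containing $I$, so it suffices to locate inside $H_0$ the required member of $\mathcal{S} \cup \mathcal{T} \cup \mathcal{M}$ with extremity set exactly $I$.

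The key structural consequence of the minimality of $|S|$ is the following dichotomy: for each $s \in S$, either $s$ is a cut vertex of $G[S]$, or $s$ is the unique neighbor in $S$ of some vertex of $I$; otherwise $S \setminus \{s\}$ would still be a connected set dominating $I$, contradicting minimality. In particular, $G[S]$ has at most three non-cut-vertices, each ``private'' to a distinct vertex of $I$. From here I would proceed by cases. If $|S|=1$, then $H_0$ is a claw and lies in $\mathcal{S}$. If $G[S]$ is a path, its two endpoints are private to two of the $a_i$'s, say $a_1$ and $a_2$; the third vertex $a_3$ then attaches to the interior of the path, yielding a subdivided claw (in $\mathcal{S}$) when it has a single such attachment and a graph in $\mathcal{M}$ (with $a_3$ as center) when it has at least two. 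If $G[S]$ is a tree with three leaves, each private to a distinct $a_i$, one obtains a subdivided claw in $\mathcal{S}$. When $G[S]$ contains a cycle, the dichotomy forces this cycle to sit at the branching location of the private vertices; extracting an induced triangle there produces a member of $\mathcal{T}$, while a longer cycle can be reduced to a member of $\mathcal{M}$.

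The main obstacle will be ensuring that the extracted subgraph is \emph{induced}, that is, that no unwanted chords appear. Two issues must be dealt with: an $a_i$ may have several neighbors in $S$, and $G[S]$ may itself contain chords along the paths connecting private neighbors. Chords inside $G[S]$ can be eliminated by replacing subpaths with shortest induced ones and invoking the minimality of $|S|$ to rule out configurations that would permit a further reduction of $S$. Chords between the $a_i$'s and the branching structure are precisely what distinguishes the three families $\mathcal{S}$, $\mathcal{T}$, and $\mathcal{M}$, so I would match each resulting pattern of chords to the corresponding family. It is the delicate bookkeeping of these chord patterns, rather than any single deep idea, that forms the hard part of the argument.
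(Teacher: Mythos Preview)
The paper does not contain a proof of this lemma: it is quoted verbatim as a result of Dallard, Dumas, Hilaire, Milani\v{c}, Perez, and Trotignon~\cite{dallard2024detecting} and used as a black box in the proofs of \Cref{lem:house-creature,lem:house-theta}. There is therefore nothing in the present paper to compare your proposal against.

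That said, your outline is a reasonable route to a self-contained proof. The minimality argument does give the dichotomy you state, and the case $|S|=1$ and the path and three-leaf tree cases go through essentially as you describe (once one checks that the ``private'' condition forces each $a_i$ to have a \emph{single} neighbor in $S$ in those cases, so no unwanted chords from $I$ appear). The place where your write-up is thinnest is the cyclic case. Your bound ``at most three non-cut-vertices'' already rules out $G[S]$ being an induced cycle of length $\ge 4$, so the only way $G[S]$ can contain a cycle is via a $2$-connected block sitting inside a larger block--cut structure; you should make explicit why minimality then forces that block to be a triangle whose three vertices are exactly the private neighbors (yielding $\mathcal{T}$), or else how a non-triangle block is shortcut down to an $\mathcal{M}$-configuration. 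None of this looks fatal, but the current sketch does not yet pin it down.
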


Using \Cref{extremities} we now derive the following.

\begin{lemma}
\label{lem:house-creature}
Let $G$ be a graph that does not contain an induced subdivision of the house.
Then, $G$ is $3$-creature-free.
\end{lemma}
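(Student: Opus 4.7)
I will prove the contrapositive: if $G$ contains a $3$-creature $(A,B,X,Y)$ with $X=\{x_1,x_2,x_3\}$ and $Y=\{y_1,y_2,y_3\}$, then $G$ contains an induced subdivision of the house. Since the house is the theta graph $\Gamma_{1,2,3}$, it suffices to produce a pair of vertices $u,v$ and three internally vertex-disjoint induced paths between $u$ and $v$ of lengths at least $1,2$, and $3$, with no chord between any two of them. First suppose that $X$ (or, symmetrically, $Y$) is not independent; say $x_1x_2\in E(G)$. Take $u=x_1$, $v=x_2$ and the three paths: the edge $x_1x_2$; a shortest $x_1,x_2$-path of length at least $2$ in $G[A\cup\{x_1,x_2\}]$, which exists because $A$ is connected and each $x_i$ has a neighbor in $A$; and a shortest $x_1,x_2$-path of length at least $2$ in $G[B\cup Y\cup\{x_1,x_2\}]$. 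Inside the latter subgraph, the only neighbor of $x_1$ other than $x_2$ is $y_1$ and the only neighbor of $x_2$ other than $x_1$ is $y_2$, so this third path must start with the edge $x_1y_1$ and end with $y_2x_2$, hence has length at least $3$. The internal vertices of the three paths lie in the pairwise disjoint sets $\emptyset,A,B\cup Y$, so the paths are internally disjoint; the absence of chords between them follows from the anticompleteness relations of the creature ($A$ anticomplete to $B\cup Y$, $B$ anticomplete to $A\cup X$), the matching property $x_iy_j\in E(G)\Leftrightarrow i=j$, and the minimality of the two shortest paths.

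From now on both $X$ and $Y$ are independent. I apply the extremities lemma (\Cref{extremities}) to the induced subgraph $G[A\cup X]$ with $I=X$ and $C=A$ (legitimate since $A$ is a connected component of $G[A\cup X]-X$ and $X\subseteq N_{G[A\cup X]}(A)$), obtaining an induced subgraph $H_A\in\mathcal{S}\cup\mathcal{T}\cup\mathcal{M}$ of $G[A\cup X]$ with extremities exactly $X$; a symmetric application to $G[B\cup Y]$ yields $H_B$ with extremities $Y$. The rest is a case analysis, up to the $A/B$-symmetry, on the types of $H_A$ and $H_B$. If one of them, say $H_B$, lies in $\mathcal{M}$, it is the union of a path $P$ with endpoints $y_1,y_2$ and internal vertices $q_1,\ldots,q_k\in B$ together with the vertex $y_3$ adjacent to $q_{i_1},\ldots,q_{i_s}$ on $P$ ($s\geq 2$, $i_1<\cdots<i_s$). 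I pick $u=y_3$, $v=q_{i_1}$ and the three paths $y_3q_{i_1}$, $(y_3,q_{i_2},q_{i_2-1},\ldots,q_{i_1})$, and the long path from $y_3$ to $q_{i_1}$ that traverses the edge $y_3x_3$, a shortest $x_3,x_1$-path in $G[A\cup\{x_1,x_3\}]$, the edge $x_1y_1$, and finally the prefix $(y_1,q_1,\ldots,q_{i_1})$ of $P$; the vertices $q_{i_3},\ldots,q_{i_s}$ do not appear in any of these three paths, so the possibly-existing edges $y_3q_{i_j}$ for $j\geq 3$ are not chords of the constructed subgraph. When neither of $H_A,H_B$ is in $\mathcal{M}$ but one, say $H_A$, is in $\mathcal{T}$, with triangle $\{c_1,c_2,c_3\}\subseteq A$ and induced paths $R_i^A$ from $c_i$ to $x_i$, I take $u=c_2$, $v=c_3$ and the three paths $(c_2,c_3)$, $(c_2,c_1,c_3)$, and the long path obtained by concatenating $R_2^A$, the edge $x_2y_2$, a shortest $y_2,y_3$-path in $G[B\cup\{y_2,y_3\}]$, the edge $y_3x_3$, and $R_3^A$ traversed in reverse; all three triangle edges are absorbed into the first two paths, so the triangle produces no chord. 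Finally, if $H_A,H_B\in\mathcal{S}$ with centers $c_A,c_B$ and induced paths $R_i^A,R_i^B$, the three paths obtained by concatenating $R_i^A$, the edge $x_iy_i$, and $R_i^B$ traversed in reverse (for $i=1,2,3$) form an induced $3$-theta from $c_A$ to $c_B$ in which every path has length at least $3$.

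The principal obstacle is the chord check in the $\mathcal{T}$ and $\mathcal{M}$ subcases, where a naive choice of branch vertices would cause triangle edges or extra center-to-path edges to chord the constructed subgraph. The choices described above avoid this either by absorbing all triangle edges into the three paths ($\mathcal{T}$) or by keeping the extra center-neighbors outside the vertex set of the subgraph ($\mathcal{M}$). All other chord checks reduce routinely to the anticompleteness relations of the creature, the matching rule $x_iy_j\in E(G)\Leftrightarrow i=j$, the inducedness of $H_A$ and $H_B$ (delivered by \Cref{extremities}), and the minimality of the shortest paths involved; since the three constructed paths always have lengths at least $1$, $2$, and $3$, their induced union is a subdivision of the house, as required.
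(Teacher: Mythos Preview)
Your proof is correct and follows essentially the same framework as the paper's: reduce to the case where $X$ and $Y$ are independent, invoke \Cref{extremities} on both sides to obtain $H_A,H_B\in\mathcal{S}\cup\mathcal{T}\cup\mathcal{M}$, and then build an induced $\Gamma_{i,j,k}$ with $i\ge 1$, $j\ge 2$, $k\ge 3$ by a case analysis on the types.

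The one noteworthy difference is your treatment of the $\mathcal{M}$ case. The paper postpones $\mathcal{M}$ to the end and, once it assumes $H_A\in\mathcal{M}$, still distinguishes whether $H_B\in\mathcal{S}$ or $H_B\in\mathcal{M}$ (and in the latter case, which of the $y_i$ is the center), using structure from both sides simultaneously. You instead dispatch $\mathcal{M}$ first and, taking two consecutive neighbours $q_{i_1},q_{i_2}$ of the center on the $H_B$-path, build the long path from the center through the matching edge $y_cx_c$, across a plain shortest $x_c,x_a$-path through $A$, and back along the $H_B$-path to $q_{i_1}$. This uses nothing about $H_A$ beyond the connectivity of $A$, so no sub-casing on $H_A$ is needed; the price is a slightly more delicate chord check (which you handle correctly by observing that the center's further neighbours $q_{i_3},\ldots,q_{i_s}$ are simply absent from the constructed subgraph). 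Your $\mathcal{T}$ construction is likewise independent of the type of $H_B$, routing the long path through an arbitrary shortest $y_2,y_3$-path in $B$. The paper's and your constructions in the remaining cases (edge in $X$; both in $\mathcal{S}$) are essentially identical up to cosmetic choices.
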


\begin{proof} 
Suppose for a contradiction that there exists a $3$-creature $H=(A,B,X,Y)$ in $G$, with $X=\{x_1,x_2,x_3\}$ and $Y=\{y_1,y_2,y_3\}$.
Note that each vertex in $X=\{x_1,x_2,x_3\}$ has a neighbor in $A$.
Similarly, each vertex in $Y= \{y_1,y_2,y_3\}$ has a neighbor in $B$.
Let $Q_A$ (resp., $Q_B$) be a shortest $x_1,x_2$-path (resp., $y_1,y_2$-path) of length at least two in $G$ such that all its internal vertices belong to $A$ (resp., $B$). 
    
Assume that there exists an edge in $X$.
By symmetry, we may assume that $x_1x_2$ is an edge. 
If $y_1y_2\in E(G)$, then the vertices $\{y_1,y_2\}$ together with the vertices from the path $Q_A$ induce a subdivision of the house in $G$.
If $y_1y_2\notin E(G)$, then the vertices 
from the paths $Q_A$ and $Q_B$ induce a subdivision of the house in $G$.
Both cases lead to a contradiction, hence, $X$ is an independent set in $G$, and, by symmetry, so is $Y$.

Note that $A$ and $B$ induced connected components of $H-X$ and $H-Y$, respectively, and $X\subseteq N_H(A)$ and $Y\subseteq N_H(B)$.
Hence, by \Cref{extremities}, there exist induced subgraphs $H_A$ and $H_B$ of $H[N[A]]$ and $H[N[B]]$, respectively, such that $H_A, H_B \in \mathcal{S}\cup \mathcal{T}\cup \mathcal{M}$ and  and $X$ and $Y$ are exactly the sets of extremities of $H_A$ and $H_B$, respectively.

Assume first that one of these two graphs belongs to $\mathcal{T}$.
Up to symmetry, we may assume that $H_A\in \mathcal{T}$.
Then, the vertices $x_1,x_2,x_3$ connect to one triangle in $H_A$.  
It follows that the shortest $x_1,x_2$-path in $H_A$ together with the remaining vertex of the triangle, the edges $x_1y_1$, $x_2y_2$, and the path $Q_B$ form an induced subdivision of the house in $G$, a contradiction. 
We may therefore assume that neither $H_A$ nor $H_B$ belong to $\mathcal{T}$.
Hence, $H_A, H_B \in \mathcal{S}\cup \mathcal{M}$. 

Assume next that both graphs $H_A$ and $H_B$ belong to $\mathcal{S}.$
In this case $H$ and, hence, $G$ contains an induced subgraph of the form $\Gamma_{i,j,k}$ with $3\le i\le j\le k$, which is a subdivision of the house, yielding a contradiction.
Therefore, by symmetry, we may assume that the graph $H_A$ does not belong to $\mathcal{S}.$
It follows that $H_A\in \mathcal{M}$.  

Recall that an arbitrary graph in $\mathcal{M}$ consists of a path $P$ and a center $a$ such that $a$ is nonadjacent to the endpoints of $P$ and $a$ has at least two neighbors in $P$.   
Up to symmetry, we may assume that $x_1$ is the center in $H_A$.
Let $a$ be the neighbor of $x_1$ in $H_A$ closest to $x_2$ and let $a'$ be a neighbor of $x_1$ in the same graph such that $x_1$ has no neighbors in the interior of the path in $A$ from $a$ to $a'$. 
Let $P$ be the $x_2,a'$-path in the graph $H_A-x_1$. 
We now analyze two cases depending on the structure of $H_B$; recall that $H_B \in \mathcal{S}\cup \mathcal{M}$.

Assume first that $H_B$ belongs to $\mathcal{S}$. 
Then, the path $P$ and the $x_1,x_2$-path in $H_B$, together with the edges $x_1a,x_1a'$ form an induced subdivision of the house in $G$, a contradiction.

Consider now the case when $H_B$ belongs to $\mathcal{M}$.
Up to symmetry, there are two subcases: either $x_1$ is the center of $H_B$, or $x_2$ is the center of $H_B$. 
Assume first that $x_1$ is the center of $H_B$. 
Let $b$ be the neighbor of $x_1$ in $H_B$ that is closest to $x_2$.
Then, the path $P$ and the $x_2,b$-path in $H_B$ together with the edges $x_1a, x_1a',x_1b$ form an induced subdivision of the house in $G$, a contradiction. 
Finally, assume that $x_2$ is the center of $H_B$. 
Let $b$ be the neighbor of $x_2$ in the path $H_B-x_2$ that is closest to $x_1$. 
Then, the path $P$ and the $x_1,b$-path in the graph $H_B$ together with the edges $x_1a,x_1a',x_2b$ form an induced subdivision of the house in $G$, a contradiction.
\qedhere
\end{proof}

It remains to prove that every graph that does not contain any induced subdivision of the house excludes the $3$-skinny ladder as an induced minor. 
Recall the notation of graphs $\Gamma_{i,j,k}$ from \Cref{sec:preliminaries} and note that the $3$-skinny ladder is isomorphic to the graph $\Gamma_{2,4,4}$ (see~\cref{fig:theta322}).

\begin{figure}[H]
\centering
\begin{tikzpicture}[vertex/.style={inner sep=1.3pt,draw,circle, fill}]
\begin{scope}[xshift=3cm,yshift=1cm]
\node[vertex,label=below:{$\hat{b}$}] (a) at (0,-1) {};
\node[vertex,label=above:{$\hat{a}$}] (b) at (0,1) {};
\node[vertex,label=below:{$c$}] (3) at (-1,0) {};
\node[vertex, label=right:{$d$}] (4) at (0,0) {};
\node[vertex, label=right:{$a$}] (2) at (0.9,0.4) {};
\node[vertex, label=below:{$b$}] (1) at (0.9,-0.4) {};
\draw[] (a) --  (1) -- (2)--(b);
\draw[] (a) --  (3)--(b);
\draw[] (a) --  (4)--(b);
\end{scope}
\begin{scope}[xshift=-2cm, yshift=1cm]
\node[vertex] (r1) at (0,0) {};
\node[vertex] (r2) at (1,0) {};
\node[vertex] (r3) at (2,0) {};

\node[vertex] (p1) at (0,1) {};
\node[vertex] (p2) at (1,1) {};
\node[vertex] (p3) at (2,1) {};

\node[vertex] (q1) at (0,-1) {};
\node[vertex] (q2) at (1,-1) {};
\node[vertex] (q3) at (2,-1) {};

\draw[] (p1) --  (r1) -- (q1);
\draw[] (p2) --  (r2) -- (q2);
\draw[] (p3) --  (r3)--(q3);
\draw[] (q1) --  (q2)--(q3);
\draw[] (p1) --  (p2)--(p3);
\end{scope}
\end{tikzpicture}
\caption{The graphs $\Gamma_{2,4,4}$ (left) and $\Gamma_{2,2,3}$ (right).}
\label{fig:theta322}
\end{figure}
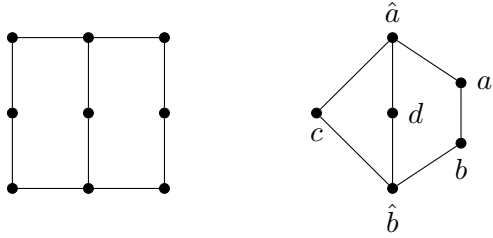

Note also that the graph $\Gamma_{2,2,3}$ is an induced minor of the graph $\Gamma_{2,4,4}$.
Hence, in order to show that every graph that does not contain any induced subdivision of the house excludes the $3$-skinny ladder as an induced minor, it suffices to show the following.

\begin{lemma}\label{lem:house-theta}
Let $G$ be a graph that does not contain an induced subdivision of the house.
Then, $G$ is $\Gamma_{2,2,3}$-induced-minor-free.
\end{lemma}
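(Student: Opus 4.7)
Suppose, for a contradiction, that $G$ contains $\Gamma_{2,2,3}$ as an induced minor. We first apply \Cref{prop:simple-induced-minor-models-of-all-paths} to the three internally disjoint $\hat{a}$-to-$\hat{b}$ paths of $\Gamma_{2,2,3}$ (edge-disjoint thin walks, since each of $a,b,c,d$ has degree two), obtaining an induced minor model in which the branch sets $X_a, X_b, X_c, X_d$ are singletons; abusing notation we denote the corresponding vertices of $G$ by $a, b, c, d$, and set $X := X_{\hat{a}}$ and $Y := X_{\hat{b}}$. Restricting $G$ to $X \cup Y \cup \{a, b, c, d\}$ preserves both hypothesis and conclusion, so we may assume $V(G)$ is exactly this union. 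Then $G[X]$ and $G[Y]$ are connected and anticomplete, $ab$ is the unique edge inside $\{a, b, c, d\}$, and $\{a, c, d\}$ and $\{b, c, d\}$ are independent triples whose members all have neighbors in $X$ and in $Y$, respectively. In particular $X$ is a component of $G - \{a, c, d\}$ and $Y$ is a component of $G - \{b, c, d\}$; applying \Cref{extremities} to $(I, C) = (\{a, c, d\}, X)$ and to $(\{b, c, d\}, Y)$ yields induced subgraphs $H_X \subseteq G[X \cup \{a, c, d\}]$ and $H_Y \subseteq G[Y \cup \{b, c, d\}]$, each lying in $\mathcal{S} \cup \mathcal{T} \cup \mathcal{M}$, with extremities exactly $\{a, c, d\}$ and $\{b, c, d\}$.

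We then perform a case analysis on the types of $H_X$ and $H_Y$, paralleling the proof of \Cref{lem:house-creature}, using that in any graph in $\mathcal{S} \cup \mathcal{T} \cup \mathcal{M}$ any two extremities are joined by a canonical induced path of length at least $2$. If $H_X \in \mathcal{T}$ (the symmetric case $H_Y \in \mathcal{T}$ being analogous), with triangle $\{t_a, t_c, t_d\}$ and tails $Q_a, Q_c, Q_d$ ending at $a, c, d$, we take branch vertices $t_c, t_d$ and the three internally disjoint $t_c$-to-$t_d$ paths $(t_c, t_d)$, $(t_c, t_a, t_d)$, and $Q_c^{-1} \oplus R_{cd} \oplus Q_d$ (with $R_{cd}$ an induced $c$-to-$d$ path in $H_Y$), of lengths $1$, $2$, and at least $4$. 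If $H_X, H_Y \in \mathcal{S}$, with claw centers $w_X, w_Y$ and legs $\pi_\star \subseteq H_X$, $\rho_\star \subseteq H_Y$, we take branches $w_X, w_Y$ and the three paths $\pi_c \oplus \rho_c^{-1}$, $\pi_d \oplus \rho_d^{-1}$, $\pi_a \oplus (a, b) \oplus \rho_b$ of lengths at least $2, 2, 3$.

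Otherwise, by symmetry we may assume $H_X \in \mathcal{M}$ with center $z \in \{a, c, d\}$ and path $P_X$ whose endpoints are the other two extremities. Label the internal vertices of $P_X$ as $x_1, \ldots, x_k$, with the orientation chosen so that if $z = a$ then $c$ is adjacent to $x_1$, and if $z \in \{c, d\}$ then $a$ is adjacent to $x_1$. The center $z$ has at least two neighbors among $x_1, \ldots, x_k$; let $x_i$ be its smallest-indexed neighbor and $x_j$ the next. We take branches $z, x_i$ and the three internally disjoint $z$-to-$x_i$ paths: the edge $(z, x_i)$; the path $(z, x_j, x_{j-1}, \ldots, x_{i+1}, x_i)$; and a third path of length at least $4$ routed as follows. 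If $z = a$, the third path is $(a, b) \oplus R_{b, c} \oplus (c, x_1, \ldots, x_i)$; if $z \in \{c, d\}$, it is $R_{z, b} \oplus (b, a) \oplus (a, x_1, \ldots, x_i)$, where $R_{u, v}$ denotes an induced $u$-to-$v$ path in $H_Y$.

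The main obstacle will be verifying that the constructed three-path configurations form induced subgraphs, i.e., that no chords appear between the paths. Observe that the only $G$-edge between $V(H_X) \setminus \{c, d\} \subseteq X \cup \{a\}$ and $V(H_Y) \setminus \{c, d\} \subseteq Y \cup \{b\}$ is $ab$, because $X$ and $Y$ are anticomplete and $a$ (respectively $b$) has no neighbor in $Y \cup \{c, d\}$ (respectively $X \cup \{c, d\}$); this rules out all spurious crossing edges. Within $H_X$ and $H_Y$, the absence of chords is inherited from their being induced subgraphs of $G$ lying in $\mathcal{S} \cup \mathcal{T} \cup \mathcal{M}$. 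The most delicate point arises in the $\mathcal{M}$-case, where one must exclude chords from the center $z$ to the internal vertices of the three paths: the choice of $x_i, x_j$ as consecutive $z$-neighbors on $P_X$ eliminates chords from $z$ into the $P_X$-interior of the second path, while the choice of $i$ as the smallest $z$-neighbor index eliminates chords from $z$ to the tail $(x_1, \ldots, x_{i-1})$ of the third path; all remaining required non-adjacencies follow from the induced property of $H_X$ and $H_Y$ together with the anticompleteness of $X \cup \{a\}$ and $Y \cup \{b\}$ established at the outset.
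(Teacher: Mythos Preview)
Your proof is correct and follows essentially the same strategy as the paper: reduce to singleton branch sets via \Cref{prop:simple-induced-minor-models-of-all-paths}, apply \Cref{extremities} on both sides to obtain $H_X, H_Y \in \mathcal{S}\cup\mathcal{T}\cup\mathcal{M}$, dispose of the $\mathcal{T}$ case and the both-in-$\mathcal{S}$ case, and in the remaining $\mathcal{M}$ case construct an explicit induced $\Gamma_{1,p,q}$ with $p\ge 2$, $q\ge 3$.

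The one place where you diverge is the treatment of the $\mathcal{M}$ case. The paper first contracts the edge $ab$ to a single vertex $u$, obtaining a graph $H'$ in which the three vertices $u,c,d$ play fully symmetric roles; this symmetry lets them assume without loss of generality that the center of $H_A'\in\mathcal{M}$ is $u$, collapsing three sub-cases into one, and they then argue that an induced house subdivision found in $H'$ lifts back to $H$. You instead stay in $G$ throughout and split directly into the sub-cases $z=a$ versus $z\in\{c,d\}$, routing the long third path through the edge $ab$ and an induced path of $H_Y$ in each case. Your approach is more elementary and sidesteps the contraction-and-lifting step entirely, at the price of one extra explicit sub-case and a bit more chord-checking; both routes produce the same kind of $\Gamma_{1,\ge 2,\ge 4}$ witnesses.
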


\begin{proof}
Suppose for a contradiction that $G$ contains $\Gamma_{2,2,3}$ as an induced minor.
Note that the three paths in $\Gamma_{2,2,3}$ joining the two vertices of degree three form a collection of edge-disjoint thin walks in $\Gamma_{2,2,3}$.
Hence, using the notation from \cref{fig:theta322}, we may assume by \Cref{prop:simple-induced-minor-models-of-all-paths} that there exists an induced minor model $\{X_v\}_{v\in V(\Gamma_{2,2,3})}$ of $\Gamma_{2,2,3}$ in $G$ such that $X_u=\{u\}$ for all $u\in \{a,b,c,d\}$. 
Denoting by $H$ the subgraph of $G$ induced by $\{X_v\}_{v\in V(\Gamma_{2,2,3})}$, we have that $V(H)=\{a,b,c,d\}\cup A\cup B$, where both $A$ and $B$ induce connected subgraphs of $H$, vertices $c$ and $d$ have neighbors in both $A$ and $B$, vertex $a$ has neighbors in $A$ but not in $B$, vertex $b$ has neighbors in $B$ but not in $A$, and $ab$ is an edge in $H$, and the sets $\{a,c,d\}$ and $\{b,c,d\}$ are independent in~$H$. 

Note that $A$ is a connected component of $H-S_A$ such that each vertex in $S_A = \{a,c,d\}$ has a neighbor in $A$.
Similarly, $B$ in a connected component of $H-S_B$ such that each vertex in $S_B = \{b,c,d\}$ has a neighbor in $B$.
By \Cref{extremities}, there exist induced subgraph $H_A$ of $H[N[A]]$ and $H_B$ of $H[N[B]]$, such that $H_A,H_B\in  \mathcal{S}\cup\mathcal{T}\cup \mathcal{M}$ and $S_A$ and $S_B$ are exactly the sets of extremities of $H_A$ and $H_B$, respectively.

Assume first that one of these two graphs belongs to $\mathcal{T}$. 
Up to symmetry, we may assume that $H_A\in \mathcal{T}$.
Then, the vertices $a,c,d$ connect to one triangle in $A$. 
Let $P$ be a shortest $c,d$-path with interior in $B$. 
It follows that the shortest $c,d$-path in $H_A$, together with the remaining vertex of the triangle and the path $P$ induces a subdivision of the house in $G$. 
We may therefore assume that neither $H_A$ nor $H_B$ belong to $\mathcal{T}$.
Hence, $H_A, H_B\in \mathcal{S}\cup \mathcal{M}.$

Assume next that both of these graphs belong to $\mathcal{S}$.  
In this case $G$ contains an induced subgraph of the form $\Gamma_{i,j,k}$ with $2\le i\le j\le k$ such that $k\ge 3$, which is a subdivision of the house.
This yields a contradiction, and by symmetry, we may assume that $H_A\in \mathcal{M}$.  

In the rest of the proof, we will consider the graph $H'$ obtained from $H$ by contracting the edge $ab$. 
Denoting by $u$ the contracted vertex, observe that the set $S = \{u,c,d\}$ is an independent set in $H'$.
Furthermore, $A$ and $B$ are connected components of $H'-S$ and each vertex of $S$ is adjacent in $H'$ to some vertex in each of $A$ and $B$.
Hence, by \Cref{extremities}, there exist induced subgraphs $H_A'$ and $H_B'$ of $H'[N[A]]$ and $H'[N[B]]$, respectively, such that $H_A', H_B'\in \mathcal{S}\cup\mathcal{T}\cup \mathcal{M}$ and $S$ is exactly the sets of extremities of $H_A'$ and $H_B'$.

By analyzing various cases regarding the structure of these two graphs, we next show that $H'$ contains an induced subdivision of the house.
Since the graph $H$ is obtained from the graph $H'$ by subdividing an edge, this will imply that $H$, and hence also $G$, contains an induced subdivision of the house, a contradiction.

Note first that, since the graph $H'[N[A]]$ is isomorphic to the graph $H[N[A]]$, it follows that $H_A'\in \mathcal{M}$.  
We may assume that $u$ is the center in $H_A'$.
The graph $H_B'$ either belongs to $\mathcal{S}$ or to $\mathcal{M}$.
Let $x$ be the neighbor of $u$ in the graph $H_A'$ closest to $c$ and let $x'$ be a neighbor of $u$ in the same graph such that $u$ has no neighbors in the interior of the path in $A$ from $x$ to $x'$. 
Let $P$ be a $c,x'$-path in the graph $H_A'-u$. We consider three cases.

Assume first that $H_B'\in \mathcal{S}$.
 Then the path $P$ and the shortest $c,u$-path in the graph $H_B'$, together with the edges $ux, ux'$ form a subdivision of the house in the graph $H'$, a contradiction.

Consider now the case when $H_B'\in \mathcal{M}$ and $u$ is a center of $H_B'$. 
Let $y$ be the neighbor of $u$ in the graph $H_B'$ closest to $c$. 
Then the path $P$ and the $c,y$-path in the graph $H_B'$ together with the edges $ux,ux',uy$ form a subdivision of the house in the graph $H'$, a contradiction. 
    
Finally, assume that $H_B'\in \mathcal{M}$ and $c$ is the center of $H_B'$. 
Let $y$ be the neighbor of $c$ in the path $H'_B-c$ closest to $u$. 
Then, the path $P$ and the $u,y$-path in the graph $H_B'$ together with the edges $ux,ux',cy$ form a subdivision of the house in the graph $H'$, a contradiction.
\end{proof}

\begin{theorem}\label{thm:house-itm-free-tame}
    The class of graphs that do not contain an induced subdivision of the house is tame.
\end{theorem}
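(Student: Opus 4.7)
The plan is to apply \Cref{thm:k-creature-free-tame} with $k=3$ to the class $\mathcal{G}$ of graphs excluding an induced subdivision of the house. For this, I need to verify the two hypotheses of that theorem: every $G \in \mathcal{G}$ is $3$-creature-free, and every $G \in \mathcal{G}$ excludes the $3$-skinny ladder as an induced minor. The first hypothesis is already established by \Cref{lem:house-creature}, so the only remaining task is the second one.

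For the second hypothesis, I would exploit the observation already recorded before \Cref{lem:house-theta}: the $3$-skinny ladder is exactly $\Gamma_{2,4,4}$, and $\Gamma_{2,2,3}$ is an induced minor of $\Gamma_{2,4,4}$ (contracting an internal edge on each of the two longer paths reduces their lengths from $4$ to $3$, and then contracting a further internal edge on one of them gives length $2$). Since the induced minor relation is transitive, any graph containing the $3$-skinny ladder as an induced minor must also contain $\Gamma_{2,2,3}$ as an induced minor. Contrapositively, any $\Gamma_{2,2,3}$-induced-minor-free graph is also $3$-skinny-ladder-induced-minor-free. Combining this with \Cref{lem:house-theta} gives the needed exclusion of the $3$-skinny ladder from $\mathcal{G}$.

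With both hypotheses in hand, \Cref{thm:k-creature-free-tame} applied with $k=3$ produces a polynomial $p$ (of fixed degree) bounding the number of minimal separators of every $G \in \mathcal{G}$ by $p(|V(G)|)$, which is exactly the definition of tameness. There is no real obstacle here: the theorem is essentially a two-line corollary of \Cref{lem:house-creature}, \Cref{lem:house-theta}, and \Cref{thm:k-creature-free-tame}, with the only nontrivial ingredient being the easy structural observation that contracting edges in $\Gamma_{2,4,4}$ yields $\Gamma_{2,2,3}$.
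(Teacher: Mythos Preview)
Your proposal is correct and follows exactly the same approach as the paper: apply \Cref{thm:k-creature-free-tame} with $k=3$, invoking \Cref{lem:house-creature} for $3$-creature-freeness and \Cref{lem:house-theta} together with the observation that $\Gamma_{2,2,3}$ is an induced minor of $\Gamma_{2,4,4}\cong$ $3$-skinny ladder for the second hypothesis.
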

\begin{proof}
    Let $\mathcal{G}$ be a class of graphs that do not contain an induced subdivision of the house and let $G$ be a graph in $\mathcal{G}$. 
    By \Cref{lem:house-creature}, $G$ is $3$-creature-free.
    By \cref{lem:house-theta}, $G$ is $\Gamma_{2,2,3}$-induced-minor-free.
    Since the graph $\Gamma_{2,2,3}$ is an induced minor of the graph $\Gamma_{2,4,4}$, which is isomorphic to the $3$-skinny ladder, we conclude that $G$ does not contain a $3$-skinny ladder as an induced minor.
    By \Cref{thm:k-creature-free-tame}, $\G$ is tame.
\end{proof}

We state the following corollary for later use. 

\begin{corollary}\label{cor:house-im-free}
    The class of house-induced-minor-free graphs is tame.
\end{corollary}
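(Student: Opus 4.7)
The plan is to deduce this corollary from \Cref{thm:house-itm-free-tame} by observing that every house-induced-minor-free graph is also house-induced-topological-minor-free. The heart of the argument is the standard observation that the induced minor relation is coarser than the induced topological minor relation: if some subdivision $H'$ of the house is an induced subgraph of $G$, then by contracting in $H'$ each subdivided edge back down to a single edge (equivalently, by taking the induced minor model in $G$ where each branch set is the internal vertices of the corresponding subdivided edge together with one of its endpoints) one obtains the house as an induced minor of $G$. Contrapositively, if $G$ is house-induced-minor-free, then $G$ does not contain an induced subdivision of the house.

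Hence the class of house-induced-minor-free graphs is a subclass of the class of graphs that do not contain an induced subdivision of the house. By \Cref{thm:house-itm-free-tame}, the latter class is tame, i.e., admits a polynomial $p$ bounding the number of minimal separators of each of its members by $p(|V(G)|)$. The same polynomial $p$ then witnesses tameness of the subclass, since the defining inequality on the number of minimal separators is inherited by any graph in a subclass.

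I do not anticipate any obstacle: the only thing to be careful about is the direction of the inclusion between the two graph classes (the induced topological minor relation implies the induced minor relation, not the other way around), and the routine fact that tameness is closed under taking subclasses, which is immediate from the definition. No new structural lemmas, no new algorithmic arguments, and no appeal to \Cref{thm:k-creature-free-tame} beyond what is already used inside the proof of \Cref{thm:house-itm-free-tame} are required.
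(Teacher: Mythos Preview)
Your proposal is correct and follows exactly the same approach as the paper: the authors also deduce the corollary immediately from \Cref{thm:house-itm-free-tame} via the observation that the class of house-induced-minor-free graphs is a subclass of the class of graphs that do not contain an induced subdivision of the house.
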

\begin{proof}
Immediate from \Cref{thm:house-itm-free-tame}, since the class of house-induced-minor-free graphs is a subclass of the class of graphs that do not contain any induced subdivision of the house. 
\end{proof}

\section{Dichotomy results}\label{sec:dichotomy-minor}

In this section we prove~\cref{thm:dichotomy-induced-minor,thm:dichotomy-induced-subdivision}, characterizing tame graph classes among graph classes excluding a single graph as an induced minor or as an induced topological minor.

We start with a lemma characterizing graphs that are simultaneously induced minors of both some short prism and some short theta.
In order to do that, we need a preparatory technical lemma regarding graph parameters that, informally speaking, measure the distance to minor-closed classes.
Let $\mathcal{F}$ be a family of graphs.
For a graph $G$, we define the following parameter:
$c_{\mathcal{F}}(G)=\min \{|S| \colon G-S$ is $\mathcal{F}$-minor-free$\}$ (see, e.g.,~\cite{MR4229413}).

\begin{lemma}\label{lem:fvn-monotonicity-parameter}
Let $\mathcal{F}$ be a family of graphs and let $G$ and $H$ be graphs such that $H$ is a minor of $G$.
Then, $c_{\mathcal{F}}(H)\le c_{\mathcal{F}}(G)$.
\end{lemma}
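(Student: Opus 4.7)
The plan is to exploit the transitivity of the minor relation together with a straightforward restriction of a minor model. Let $S\subseteq V(G)$ be an optimal set realizing $c_{\mathcal{F}}(G)$, so $|S|=c_{\mathcal{F}}(G)$ and $G-S$ is $\mathcal{F}$-minor-free. Since $H$ is a minor of $G$, fix a minor model $\{X_v\}_{v\in V(H)}$ of $H$ in $G$; that is, a collection of pairwise disjoint vertex sets such that each $G[X_v]$ is connected, and for every edge $uv\in E(H)$ there is an edge of $G$ between $X_u$ and $X_v$.

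Define the ``hit'' set $S_H=\{v\in V(H):X_v\cap S\neq\emptyset\}$. Clearly $|S_H|\le |S|=c_{\mathcal{F}}(G)$, since the family $\{X_v\}_{v\in V(H)}$ is pairwise disjoint and each $v\in S_H$ contributes a distinct element of $S$. The key observation is that the collection $\{X_v\}_{v\in V(H)\setminus S_H}$ consists entirely of subsets of $V(G)\setminus S$, and therefore forms a minor model of $H-S_H$ inside $G-S$: connectivity of each $G[X_v]$ and the required adjacencies for edges of $H-S_H$ are inherited directly from the model in $G$. Hence $H-S_H$ is a minor of $G-S$.

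Since the minor relation is transitive, every minor of $H-S_H$ is also a minor of $G-S$. Because $G-S$ is $\mathcal{F}$-minor-free by the choice of $S$, it follows that $H-S_H$ is $\mathcal{F}$-minor-free as well. By the definition of $c_{\mathcal{F}}$, this yields $c_{\mathcal{F}}(H)\le |S_H|\le |S|=c_{\mathcal{F}}(G)$, completing the argument.

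There is no real obstacle here; the proof is essentially a bookkeeping exercise. The only subtle point worth stating carefully is that $\{X_v\}_{v\in V(H)\setminus S_H}$ is a bona fide minor model in $G-S$ (one must observe that removing the branch sets that meet $S$ leaves the remaining branch sets untouched and still disjoint from $S$), after which transitivity of the minor relation finishes the argument.
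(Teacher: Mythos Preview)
Your proof is correct, but it follows a different route from the paper's. The paper argues operation-by-operation: it shows that $c_{\mathcal{F}}$ does not increase under a single vertex deletion, edge deletion, or edge contraction, and then appeals to the definition of minor as a sequence of such operations. You instead work directly with a minor model $\{X_v\}$ of $H$ in $G$, take $S_H$ to be the set of vertices of $H$ whose branch sets meet the optimal deletion set $S$, and observe that the surviving branch sets witness $H-S_H$ as a minor of $G-S$; transitivity of the minor relation then finishes. Your argument is arguably more conceptual and avoids the small case analysis (in particular the edge-contraction case split in the paper), while the paper's version has the mild advantage of not needing to invoke the equivalence between the operational and model-based definitions of minor. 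Both are short and elementary; neither offers a real technical edge over the other.
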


\begin{proof}
Let $k = c_{\mathcal{F}}(G)$.
It suffices to show that any graph $G'$ obtained from $G$ by a single vertex deletion, edge deletion, or edge contraction satisfies $c_{\mathcal{F}}(G')\le k$.
Let $S\subseteq V(G)$ be a set of size $k$ such that $G-S$ is $\mathcal{F}$-minor-free.

If $G' = G-v$ for some vertex $v\in V(G)$, then $S' = S\cap V(G')$ is a set of size at most $k$ such that $G'-S'$ is an induced subgraph of $G-S$ and, hence, $\mathcal{F}$-minor-free.

If $G' = G-e$ for some edge $e\in E(G)$, then $G'-S$ is a subgraph of $G-S$ and, hence, $\mathcal{F}$-minor-free.

Finally, assume that $G' = G/e$ for some edge $e= uv\in E(G)$, and let $w$ be the new vertex.
If at least one of $u$ and $v$ belongs to $S$, then $S' = (S\setminus \{u,v\})\cup \{w\}$ is a set of size at most $k$ such that $G'-S'$ is an induced subgraph of $G-S$ and, hence, $\mathcal{F}$-minor-free.
Otherwise, $u,v\in V(G)\setminus S$, in which case $G'-S$ is a minor of $G-S$ and, hence, $\mathcal{F}$-minor-free.
\end{proof}

We need the following special case of \cref{lem:fvn-monotonicity-parameter}.
A \emph{feedback vertex set} in a graph $G$ is a set $S\subseteq V(G)$ such that $G-S$ is acyclic.
The \emph{feedback vertex set number} of $G$, denoted by $\fvs(G)$, is the minimum cardinality of a feedback vertex set.
Since a graph is acyclic if and only if it does not contain the cycle $C_3$ as a minor, we have $\fvs(G) = c_{\mathcal{F}}(G)$ for $\mathcal{F} = \{C_3\}$; hence, the following holds.

\begin{corollary}\label{cor:fvn}
If a graph $H$ is a minor of a graph $G$, then $\fvs(H)\le\fvs(G)$.
\end{corollary}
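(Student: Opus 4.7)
The plan is to derive this statement immediately as a specialization of \Cref{lem:fvn-monotonicity-parameter}. The only thing to verify is that the feedback vertex set number fits into the framework of the parameter $c_{\mathcal{F}}(\cdot)$ for an appropriate choice of $\mathcal{F}$.

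First, I would observe that a graph is acyclic if and only if it is $C_3$-minor-free. Indeed, an acyclic graph clearly contains no $C_3$ as a (sub)graph, hence not as a minor. Conversely, if a graph contains a cycle, then it contains some cycle $C_k$ with $k \ge 3$ as a subgraph, and $C_3$ is a minor of $C_k$ (obtained by contracting $k-3$ consecutive edges of the cycle). Therefore, for any graph $G$, a set $S \subseteq V(G)$ is a feedback vertex set of $G$ if and only if $G - S$ is $\{C_3\}$-minor-free, which gives the identity $\fvs(G) = c_{\mathcal{F}}(G)$ for $\mathcal{F} = \{C_3\}$.

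Having established this identity, I would invoke \Cref{lem:fvn-monotonicity-parameter} with this choice of $\mathcal{F}$: since $H$ is a minor of $G$, we obtain
\[
\fvs(H) = c_{\{C_3\}}(H) \le c_{\{C_3\}}(G) = \fvs(G),
\]
which is the desired conclusion. There is no real obstacle here; the entire content of the corollary is the identification of $\fvs$ with $c_{\{C_3\}}$, after which the inequality is an immediate consequence of the previously established lemma.
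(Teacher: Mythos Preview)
Your proposal is correct and matches the paper's own argument exactly: the paper also observes that a graph is acyclic if and only if it is $C_3$-minor-free, so $\fvs = c_{\{C_3\}}$, and then invokes \Cref{lem:fvn-monotonicity-parameter}.
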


This result has the following consequence.

\begin{corollary}\label{cor:fvn-theta-minor}
Let $H$ be a graph that is a minor of some short theta. 
Then, $\fvs(H)\le 1$.
\end{corollary}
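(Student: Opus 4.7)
The plan is to reduce the claim to \Cref{cor:fvn} by exhibiting, for every short theta, a single vertex whose deletion makes the graph acyclic. Concretely, let $T$ be a $k$-theta with $k\ge 3$, using the notation from \Cref{sec:preliminaries}: the vertex set is $\{a,b,a_1,\ldots,a_k,b_1,\ldots,b_k\}$ and the edge set is $\{aa_i,\,b_ib,\,a_ib_i : 1\le i\le k\}$. I would observe that the subgraph $T-a$ has edge set $\{b_ib,\,a_ib_i : 1\le i\le k\}$, which is exactly a subdivided star centered at $b$ with leaves $a_1,\ldots,a_k$. In particular, $T-a$ is a tree, so it is acyclic, which means $\{a\}$ is a feedback vertex set of $T$ and hence $\fvs(T)\le 1$.

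Since the hypothesis says that $H$ is a minor of some short theta $T$, \Cref{cor:fvn} gives $\fvs(H)\le \fvs(T)\le 1$, establishing the corollary. There is no serious obstacle here beyond verifying the explicit structure of $T-a$, which is immediate from the definition of a short $k$-theta; the real work has already been done in \Cref{cor:fvn} (and \Cref{lem:fvn-monotonicity-parameter}), whose role is precisely to transfer this simple structural observation about the host class to all its minors.
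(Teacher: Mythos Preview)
Your proof is correct and follows essentially the same approach as the paper: both arguments observe that deleting one of the two high-degree vertices from a short theta yields an acyclic graph, giving $\fvs\le 1$ for the theta, and then invoke \Cref{cor:fvn} to pass this bound to any minor. Your version is slightly more explicit in identifying $T-a$ as a subdivided star, but the underlying idea is identical.
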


\begin{proof}
Let $k$ be a positive integer, let $G$ be the $k$-theta and let $H$ be a minor of $G$.
Since deleting a vertex of degree $k$ from $G$ results in an acyclic graph, we have $\fvs(G)\le 1$.
Hence, \cref{cor:fvn} implies that $\fvs(H)\le \fvs(G)\le 1$.
\end{proof}

We can now prove the announced result, a characterization of graphs that are induced minors of some short prism, as well as of some short theta.
It turns out that there are only finitely many such graphs.
Some of them are depicted in~\cref{fig:small-induced-subgraphs-diamond-house-2p2}.
\medskip

\begin{figure}[H]
\centering
\begin{tikzpicture}[scale=1.15,vertex/.style={inner sep=1.3pt,draw,circle, fill}]
\begin{scope}[xshift=-0.5cm]
\node[vertex] (1) at (0.2,1) {};
\node[vertex] (2) at (0.2,0) {};
\node[vertex] (3) at (0.7,0.5) {};
\draw[] (1)--(2);
\node[] (s) at (0.5,-0.5) {$P_2+P_1$};
\end{scope}
\begin{scope}[xshift=1.65cm]
\node[vertex] (1) at (0.5,1) {};
\node[vertex] (2) at (0,0) {};
\node[vertex] (3) at (1,0) {};
\draw[] (1)--(2)--(3)--(1);
\node[] (s) at (0.5,-0.5) {$K_3$};
\end{scope}
\begin{scope}[xshift=3.8cm]
\node[vertex] (1) at (0,1) {};
\node[vertex] (2) at (0,0) {};
\node[vertex] (3) at (1,0) {};
\node[vertex] (4) at (1,1) {};
\draw[] (1)--(4);
\draw[] (3)--(2);
\node[] (s) at (0.5,-0.5) {$2P_2$};
\end{scope}
\begin{scope}[yshift=-2.5cm,xshift=-0.5cm]
\node[vertex] (1) at (0,1) {};
\node[vertex] (2) at (0,0) {};
\node[vertex] (3) at (1,0) {};
\node[vertex] (4) at (1,1) {};
\draw[] (1)--(2)--(3)--(4)--(1);
\node[] (s) at (0.5,-0.5) {$C_4$};
\end{scope}
\begin{scope}[xshift=5.9cm]
\node[vertex] (1) at (0,1) {};
\node[vertex] (2) at (0,0) {};
\node[vertex] (3) at (0.75,0.5) {};
\node[vertex] (4) at (1.5,0.5) {};
\draw[] (1)--(2)--(3)--(1);
\draw[] (4)--(3);
\node[] (s) at (0.75,-0.5) {paw};
\end{scope}
\begin{scope}[yshift=-2.5cm,xshift=1.65cm]
\node[vertex] (1) at (0,1) {};
\node[vertex] (2) at (0,0) {};
\node[vertex] (3) at (1,0) {};
\node[vertex] (4) at (1,1) {};
\draw[] (1)--(2)--(3)--(4)--(1);
\node[] (s) at (0.5,-0.5) {diamond};
\draw[] (1)--(3);
\end{scope}
\begin{scope}[yshift=-2.5cm,xshift=3.8cm]
\node[vertex] (1) at (0,1) {};
\node[vertex] (2) at (0,0) {};
\node[vertex] (3) at (1,0) {};
\node[vertex] (4) at (1,1) {};
\node[vertex] (5) at (0.5,1.5) {};
\draw[] (1)--(2)--(3)--(4)--(1);
\node[] (s) at (0.5,-0.5) {house};
\draw[] (4)--(5)--(1);
\end{scope}
\begin{scope}[xshift=5.9cm, yshift=-2.5cm]
\node[vertex] (1) at (0,1) {};
\node[vertex] (2) at (0,0) {};
\node[vertex] (3) at (1.5,0) {};
\node[vertex] (4) at (1.5,1) {};
\node[vertex] (5) at (0.75,0.5) {};
\draw[] (1)--(2)--(5)--(3)--(4)--(5)--(1);
\node[] (s) at (0.75,-0.5) {butterfly};
\end{scope}
\begin{scope}[xshift=2cm]
\end{scope}
\end{tikzpicture}\caption{Some small graphs.}\label{fig:small-induced-subgraphs-diamond-house-2p2}
\end{figure}
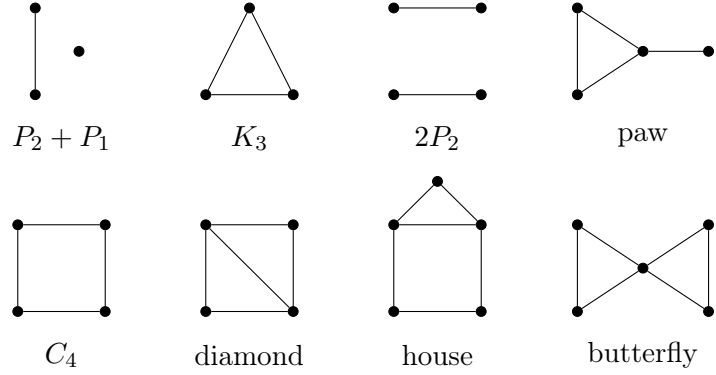

\begin{lemma}
\label{H-induced-minor-of-three-graphs}
Let $H$ be a graph that is an induced minor of some short prism and of some short theta.
Then $H$ is an induced subgraph of the diamond, the butterfly, or the house.
\end{lemma}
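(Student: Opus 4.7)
The plan is to derive two necessary conditions from the hypotheses and then case-analyze on $|V(H)|$.

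First, since the short theta has feedback vertex set number~$1$, \Cref{cor:fvn-theta-minor} gives $\fvs(H) \le 1$; fix a vertex $v^* \in V(H)$ with $F := H - v^*$ a forest. Second, I will show $\alpha(H) \le 2$: consider an induced minor model $\{X_u\}_{u \in V(H)}$ of $H$ in a $k$-prism with cliques $A$ and $B$. For any two non-adjacent vertices $u, v$ of $H$, no edge of the prism connects $X_u$ and $X_v$, so (since $A$ and $B$ are cliques) $X_u$ and $X_v$ cannot both meet $A$, and cannot both meet $B$. Hence in any independent set of $H$ at most one vertex is modeled inside $A$ and at most one inside $B$, which gives $\alpha(H) \le 2$. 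Combined with $\fvs(H) \le 1$, $F$ is a forest with independence number at most~$2$; since any tree on at least $5$ vertices and the claw both contain an independent set of size~$3$, this forces $F$ to be an induced subgraph of $P_4$ or of $2P_2$, and hence $|V(H)| \le 5$.

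For $|V(H)| \le 4$, a quick enumeration of graphs with $\alpha \le 2$ and $\fvs \le 1$ leaves only $K_3 + P_1$ as a candidate that is not already an induced subgraph of the diamond, butterfly, or house. I will rule it out by a triangle-model argument in the $k$-theta: since the $k$-theta is bipartite, any triangle model must use both hubs $a$ and $b$, and a short case split — whether $a, b$ sit in the same set of the triangle or in two distinct sets — shows that every remaining vertex of the $k$-theta is adjacent, via $a$ or $b$, to some set of the triangle model. Hence the isolated vertex of $K_3 + P_1$ has no admissible placement.

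For $|V(H)| = 5$, $F$ is either $2P_2$ or $P_4$. Enumerating the admissible neighborhoods of $v^*$ (those preventing a size-$3$ independent set in $H$) yields, up to isomorphism, the following candidates: butterfly, house, $C_5$, gem, $K_3 + P_2$, the graph obtained from a triangle by attaching a pendant path of length~$2$ at one of its vertices, and the graph obtained from the diamond by attaching a pendant edge at a degree-$2$ vertex. Butterfly and house are the target graphs. For $C_5$ I will use the prism-side structure: a set $X_u$ meeting both $A$ and $B$ is universal in $H$, but $C_5$ has no universal vertex; hence every $X_u$ is entirely inside $A$ or inside $B$, forming two cliques of size at most $2$ each (as $C_5$ is triangle-free), a contradiction. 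For each of the remaining four graphs I will run a tailored analysis of the possible triangle models in a $k$-theta (distinguishing the ``big-set'' case, where one model of a triangle vertex contains both hubs, from the ``split'' case, where $a$ and $b$ lie in different sets of the triangle); in each subcase, the specific extra structure of the candidate graph cannot be realized given that every remaining vertex of the $k$-theta is adjacent, through $a$ or $b$, to the triangle model.

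The main obstacle will be the $|V(H)| = 5$ case: each of the four exceptional candidates needs its own short structural check. The unifying observation is that any triangle in an induced minor of a $k$-theta forces both hubs $a$ and $b$ to be consumed by the triangle model, leaving for the rest of $H - v^*$ essentially only a disjoint union of $P_2$'s (the unused arms of the theta), which severely limits what additional non-triangle structure can be realized.
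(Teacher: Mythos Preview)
Your proposal is correct and reaches the same conclusion as the paper, but via a different organization. The paper first proves four structural facts about $H$ --- it is $K_4$-minor-free, $(K_3+P_1)$-free, gem-free, and (crucially) co-bipartite, the last since co-bipartiteness is preserved under taking induced minors of the prism --- and then runs a short case analysis on the sizes of the two cliques in a co-bipartition of $H$. You instead extract only $\fvs(H)\le 1$ (from the theta side) and $\alpha(H)\le 2$ (from the prism side), deduce $|V(H)|\le 5$, and enumerate all such graphs directly. Because $\alpha\le 2$ is strictly weaker than co-bipartiteness and you do not isolate gem-freeness up front, your enumeration produces several extra candidates ($C_5$, $K_3+P_2$, the triangle with a pendant path of length two, and the diamond with a pendant), each of which you then eliminate by an ad-hoc argument that is, in essence, a special case of the paper's Claims~2 and~3 (the ``triangle model consumes both hubs'' observation). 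The paper's route buys a shorter final case analysis at the cost of the extra structural observation that co-bipartiteness is closed under induced minors; your route is more elementary and self-contained but pays with a longer tail of cases. One small remark: for the gem specifically, it is quicker to use all three of its triangles simultaneously (each must absorb both hubs, forcing $a,b\in X_{v_5}$, and then the residual $P_4$ has to live in $kP_2$) than to run the big-set/split dichotomy on a single triangle --- though the latter also goes through.
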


\begin{proof}
Let $k\ge 3$ and $\ell\ge 3$ be integers and let $H$ be a graph that is an induced minor of the $k$-prism and of the $\ell$-theta. 
Let $G_1$ be the $k$-prism and let $G_2$ be the $\ell$-theta.  
We prove a few claims about the structure of $H$.
\medskip

\noindent \textbf{Claim 1:} $H$ is $K_4$-minor-free.
\smallskip

\noindent \emph{Proof of Claim 1.}
Since $\fvs(K_4) = 2$ while $\fvs(G_2)\le 1$, \cref{cor:fvn-theta-minor} implies that $G_2$ is $K_4$-minor-free. 
But then so is $H$.\hfill$\blacklozenge$

\medskip
\noindent \textbf{Claim 2:} $H$ is $(K_3+P_1)$-free.
\medskip

\noindent \emph{Proof of Claim 2.}
It suffices to show that $G_2$ is $(K_3+P_1)$-induced-minor-free.
Suppose for a contradiction that $\widetilde{H}=K_3+P_1$ is an induced minor of $G_2$.
Label the vertices of $\widetilde{H}$ by $p,q,r,s$ so that $s$ is the isolated vertex and the vertices $p,q,r$ form a triangle.
Let $\{X_p,X_q,X_r,X_s\}$ be an induced minor model of $\widetilde{H}$ in $G_2$.
Any two of the sets $X_p$, $X_q$, and $X_r$ are connected by an edge in $G$; using one such edge per pair and, for each of the sets $X_p$, $X_q$, and $X_r$, a path connecting the endpoints of those edges belonging to the set, we obtain a cycle $C$ in $G_2$.
Since every vertex of $G_2$ either belongs to $C$ or has a neighbor on it, the set $X_s$ cannot be anticomplete to $X_p\cup X_q\cup X_r$; a contradiction with the fact that $\{X_p,X_q,X_r,X_s\}$ is an induced minor model of $\widetilde{H}$ in $G_2$.\hfill$\blacklozenge$

\medskip

\noindent \textbf{Claim 3:} $H$ is gem-free.
\medskip

\noindent \emph{Proof of Claim 3.}
It suffices to show that $G_2$ is gem-induced-minor-free.
Suppose for a contradiction that the gem $\widetilde{H}$ is an induced minor of $G_2$.
Label the vertices of $\widetilde{H}$ by $v_1,v_2,v_3,v_4,v_5$ so that  $(v_1,v_2,v_3,v_4)$ is an induced $P_4$ and $v_5$ is universal in $\widetilde{H}$. 
Let $\{X_v\}_{v\in V(\widetilde{H})}$ be an induced minor model of $\widetilde{H}$ in $G_2$. 
Note that $G_2$ contains two vertices $a$ and $b$ that belong to every cycle.
We claim that $\{a,b\}\subseteq X_{v_5}$.
This is true, for if up to symmetry $a\not\in X_{v_5}$, then there exists some $i\in \{1,2,3\}$ such that $a\not\in X_{v_i}\cup X_{v_{i+1}}\cup X_{v_5}$ and, hence, the subgraph of $G_2$ induced by $X_{v_i}\cup X_{v_{i+1}}\cup X_{v_5}$ contains a cycle not containing $a$, a contradiction.
Since $\{a,b\}\subseteq X_{v_5}$, the subgraph of $G_2$ induced by $\bigcup_{i=1}^4 X_{v_i}$ is an induced subgraph of $G_2-\{a,b\}$, that is, of $kP_2$.
But there is a path of length at least $3$ in $\bigcup_{i=1}^4 X_{v_i}$, which cannot be contained in $kP_2$, a contradiction.
\hfill$\blacklozenge$

\medskip
We now use Claims 1--3 to analyze the structure of $H$, showing that $H$ must be isomorphic to an induced subgraph of the diamond, the butterfly, or the house.
Note that any such graph is either a path $P_n$ or an edgeless graph $nP_1$ for some $n\le 4$, or one of the graphs depicted in~\cref{fig:small-induced-subgraphs-diamond-house-2p2}.

Since $G_1$ is co-bipartite and $H$ can be obtained by contracting edges of an induced subgraph of $G_1$, we infer that $H$ is also co-bipartite. 
Then the vertex set of $H$ is a union of two cliques $A$ and $B$. 
Since $H$ is $K_4$-free, 
each of $A$ and $B$ has at most $3$ vertices.
Furthermore, By \Cref{cor:fvn} it follows that $\fvs(H)\le 1$ and so at most one of the cliques $A$ and $B$ contains a cycle. 
Hence, $H$ has at most five vertices. 
We may assume without loss of generality that $|A|\ge |B|$.

If $A$ has a single vertex, then $H$ is isomorphic to either $P_1$, $2P_1$, or $P_2$.
If $A$ has two vertices and $B$ has at most one vertex, then $H$ is isomorphic to either $P_2$, $P_3$, or $K_3$.
If both $A$ and $B$ have two vertices, let $A=\{a_1,a_2\}$ and $B=\{b_1,b_2\}$. 
Since $H$ is $K_4$-minor-free, $A$ and $B$ are not complete to each other. 
Up to symmetry, let $a_1b_1\notin E(H)$. 
Depending on the existence of other edges between $A$ and $B$ we get that $H$ is isomorphic to either $2P_2$, $P_4$, $C_4$, the paw, or the diamond.

Consider now the case when $A=\{a_1,a_2,a_3\}$. 
If $B$ is empty, then $H\cong K_3$. 
If $B$ consists of a single vertex, then this vertex has either one or two neighbors in $A$, since $H$ is $\{K_4,K_3+P_1\}$-free.
In the first case, it follows that $H$ is a paw. 
In the second case $H$ is a diamond. 
Finally, assume that $B$ consists of two vertices $b_1$ and $b_2$. 
If $b_1$ and $b_2$ have a common neighbor in $A$, we may assume that they are both adjacent to the vertex $a_1$. 
If $H$ has no other edges, then $H$ is a butterfly.
If, up to symmetry, $a_2b_1\in E(H)$, then $a_2b_2\notin E(H)$ and $a_3b_1\notin E(H)$ since $H$ is $K_4$-free.
Since $H$ is not isomorphic to the gem, we have that $a_3b_2\in E(H)$, and, hence, $\fvs(H) = 2$, which is in contradiction with \cref{cor:fvn-theta-minor}.

If $b_1$ and $b_2$ have no common neighbors in $A$, up to symmetry we may assume that $a_1b_1\in E(H)$ and $a_2b_2\in E(H)$, while $a_1b_2$ and $a_2b_1$ are non-edges in $H$. 
Since $H$ is $K_4$-minor-free, neither of the two vertices in $B$ is adjacent to $a_3$, hence, $H$ is a house. 
\end{proof}

We now have everything ready to prove~\cref{thm:dichotomy-induced-minor,thm:dichotomy-induced-subdivision}, which we restate for the convenience of the reader.

\dichotomyinducedminor*

\begin{proof}
Clearly, \eqref{cond-1} implies \eqref{cond-4}.
We next prove that \eqref{cond-4} implies \eqref{cond-2}.
Assume that $\G$ is not feral.
By \Cref{obs:tame}, neither the class of short prisms nor the class of short thetas is a subclass of $\mathcal{G}$.
Hence, there exist integers $k\ge 3$, $\ell\ge 3$ such that $H$ is an induced minor of the $k$-prism and of the $\ell$-theta.
By \Cref{H-induced-minor-of-three-graphs}, it follows that $H$ is an induced subgraph of the diamond, the butterfly, or the house.

Since the diamond is an induced minor of the house, \eqref{cond-2} implies \eqref{cond-3}. 

It remains to prove that \eqref{cond-3} implies \eqref{cond-1}.
If $H$ is an induced minor of the butterfly, then $\G$ is a subclass of the class of butterfly-induced-minor-free graphs and $\G$ is tame by \Cref{thm:butterfly-free-tame}.
If $H$ is an induced minor of the house, then $\G$ is a subclass of the class of the class of house-induced-minor-free graphs and $\G$ is tame by \Cref{cor:house-im-free}.
\end{proof}

\dichotomyinducedsubdivision*
\begin{proof}
Clearly, \eqref{cond-1-subdivision} implies \eqref{cond-4-subdivision}.
We next show that \eqref{cond-4-subdivision} implies \eqref{cond-2-subdivision}.
Let $\mathcal{G}$ be non-feral.
By \Cref{obs:tame}, neither the class of short prisms nor the class of short thetas is a subclass of $\mathcal{G}$.
Hence, there exist integers $k\ge 3$, $\ell\ge 3$ such that the $k$-prism $G_1$ and the $\ell$-theta $G_2$ both contain an induced subdivision of $H$.
In other words, $H$ is an induced topological minor of both $G_1$ and $G_2$, and in particular, an induced minor of both $G_1$ and $G_2$.
It follows from \cref{H-induced-minor-of-three-graphs} that $H$ is an induced subgraph of the diamond, the butterfly, or the house.
Note that every proper induced subgraph of the butterfly is also an induced subgraph of either $2P_2$, the diamond, or the house (see~\cref{fig:small-induced-subgraphs-diamond-house-2p2}).
Hence, to complete the proof of the implication, it suffices to show that $H$ is not a butterfly.

Suppose for a contradiction that $H$ is the butterfly. 
Then, $G_2$ contains an induced subdivision of the butterfly. 
Let $F$ be an induced subdivision of the butterfly contained in $G_2$ and let $x$ be the vertex of degree $4$ in $F$ (all other vertices in $F$ are of degree $2$). 
Let $a$ and $b$ be the two vertices of degree $\ell$ in $G_2$.
Every induced cycle in $G_2$ (and thus in $F$) contains both $a$ and $b$, so any two induced cycles in $G_2$ have at least two common vertices.
But the two cycles of $F$ intersect only in $x$, a contradiction.

Since the diamond is an induced topological minor of the house, \eqref{cond-2-subdivision} implies \eqref{cond-3-subdivision}. 

It remains to prove that \eqref{cond-3-subdivision} implies \eqref{cond-1-subdivision}.
If $H$ is an induced topological minor of $2P_2$, then $\G$ is a subclass of the class of $2P_2$-free graphs. 
The class of $2P_2$-free graphs is tame (see, e.g.,~\cite{MilanicP21}), and we infer that the class $\G$ is tame as well. 
If $H$ is an induced topological minor of the house, then the class $\G$ is a subclass of the class of graphs that do not contain any induced subdivision of the house, and $\G$ is tame by \Cref{thm:house-itm-free-tame}.
\end{proof}

\section{Recognition algorithms}
\label{sec:recognition} 

In this section we give polynomial-time algorithms for the recognition of maximal tame graph classes appearing in our dichotomy theorems \Cref{thm:dichotomy-induced-minor,thm:dichotomy-induced-subdivision}, except for the class of $2P_2$-free graphs, which we already discussed in the introduction.
We first prove \cref{thm:recognition-house-subdivision}, dealing with graphs containing the house as an induced topological minor, in \Cref{sec:recognition-house-subdivision}.
The result for graphs containing the house as an induced minor and developed in \Cref{sec:recognition-house-minor} by a reduction to the induced topological minor case.
Finally, we explain in \Cref{subsec:butterfly} a result due to Dumas and Hilaire (personal communication, 2024) leading to a polynomial-time recognition algorithm for determining if a given graph contains the butterfly as an induced minor.

\subsection{Proof of~\cref{thm:recognition-house-subdivision}}
\label{sec:recognition-house-subdivision}

In order to recognize graphs excluding the house as an induced topological minor, we first characterize this graph class in terms of two families of forbidden induced subgraphs.
Then, we apply the three-in-a-tree algorithm by Chudnovsky and Seymour~\cite{chudnovsky2010three} and a result due to Trotignon and Pham~\cite{trotignon2018chi}.

Building on the terminology of Trotignon and Pham~\cite{trotignon2018chi}, we say that a \textit{long unichord} in a graph is an edge that is the unique chord of some cycle of length at least $5$.
A graph is \emph{long-unichord-free} if it does not contain any long unichord.
A \textit{long theta} is any theta graph other than $K_{2,3}$, that is, a graph of the form $\Gamma_{i,j,k}$ with \hbox{$\min\{i,j,k\}\ge 2$} and $\max\{i,j,k\}\ge 3$.
A graph is \emph{long-theta-free} if it does not contain any long theta as an induced subgraph. 
These concepts lead to the following characterization of the class of graphs not containing any induced subdivision of the house.

\begin{lemma}\label{thm:forbidden-subgraphs-subdivision-of-house}
A graph $G$ does not contain any induced subdivision of the house if and only if $G$ is long-unichord-free and long-theta-free.
\end{lemma}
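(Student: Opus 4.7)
The statement is an equivalence between the absence of an induced subdivision of the house and the conjunction of long-unichord-freeness and long-theta-freeness. My plan is to prove both implications via a clean case analysis on the structure of a house subdivision. The house has two degree-$3$ vertices $a,d$ joined by three internally disjoint paths of lengths $1$, $2$, and $3$: the edge $ad$, the path $a$-$e$-$d$, and the path $a$-$b$-$c$-$d$. Consequently, any induced subdivision $H'$ of the house in a graph $G$ consists of two vertices $a,d\in V(H')$ of degree $3$ in $H'$, joined by three internally disjoint induced paths $P_1,P_2,P_3$ of lengths $i_1\ge 1$, $i_2\ge 2$, $i_3\ge 3$, with no other edges among $V(H')$. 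The natural dichotomy is on whether $P_1$ is a single edge, i.e.~whether $i_1=1$ or $i_1\ge 2$.

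For the forward direction, I would assume that $G$ contains such an $H'$ and branch on this dichotomy. If $i_1\ge 2$, then all three paths have length at least $2$ and at least one has length at least $3$, so $H'$ is itself a long theta $\Gamma_{i_1,i_2,i_3}$. If $i_1=1$, then $C := P_2\cup P_3$ is a cycle of length $i_2+i_3\ge 5$; since $H'$ is induced in $G$ and $V(C)=V(H')$, the edge $ad$ is the unique chord of $C$ in $G$, making $ad$ a long unichord.

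For the reverse direction, I would show that each forbidden substructure is already an induced subdivision of the house. A long theta $\Gamma_{i,j,k}$ with $\min\{i,j,k\}\ge 2$ and $\max\{i,j,k\}\ge 3$ is such a subdivision, upon labelling the paths so that the longest plays the role of the subdivided $a$-$b$-$c$-$d$ path and the other two play the roles of the subdivided $ad$ and $a$-$e$-$d$ paths. For a long unichord---an edge $uv$ that is the unique chord of some cycle $C$ of length $n\ge 5$---the induced subgraph $G[V(C)]$ is precisely $C$ together with the chord $uv$. The chord splits $C$ into two internally disjoint $u,v$-paths $Q_1, Q_2$ of lengths $\ell_1,\ell_2\ge 2$ with $\ell_1+\ell_2=n\ge 5$, so $\max\{\ell_1,\ell_2\}\ge 3$; the three paths $uv$, $Q_1$, $Q_2$ then form an induced house subdivision inside $G$.

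I do not anticipate any substantial obstacle. The only point requiring a moment of care is confirming that in each case the exhibited subgraph is genuinely induced and carries no hidden edges. In the forward direction this is automatic from the induced nature of $H'$. In the reverse direction it is ensured by the unique-chord condition (for long unichords) and by the hole requirement built into the definition of $\Gamma_{i,j,k}$ (for long thetas), which prevents any additional edges among the vertices of the paths.
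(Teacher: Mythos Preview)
Your proof is correct and follows essentially the same approach as the paper: both arguments identify induced subdivisions of the house with the graphs $\Gamma_{i,j,k}$ for $i\ge 1$, $j\ge 2$, $k\ge 3$, and then split on whether the shortest path has length $1$ (yielding a long unichord) or at least $2$ (yielding a long theta). The paper phrases this via the $\Gamma_{i,j,k}$ notation throughout, while you work directly with the three-path description, but the content is identical.
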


\begin{proof}
Note that if a graph $G$ contains a long unichord $ab$, then $G$ contains a cycle $C$ with length at least $5$ such that $ab$ is the unique chord of $C$.
In this case, the subgraph of $G$ induced by $V(C)$ is isomorphic to a graph of the form $\Gamma_{1,j,k}$ for some two integers $j\ge 2$ and $k\ge 3$. 
The converse also holds, hence, $G$ contains a long unichord if and only if $G$ contains an induced subgraph isomorphic to a graph of the form $\Gamma_{1,j,k}$ with $j\ge 2$ and $k\ge 3$. 
Similarly, $G$ contains an induced long theta if and only if it contains an induced subgraph isomorphic to a graph of the form $\Gamma_{i,j,k}$ such that $i,j\ge 2$ and $k\ge 3$.

Since the house is isomorphic to the graph $\Gamma_{1,2,3}$, any subdivision of the house is isomorphic to the graph $\Gamma_{i,j,k}$ for some $i\ge 1$, $j\ge 2$, and $k\ge 3$. 
Hence, a graph $G$ contains an induced subdivision of the house if and only if $G$ contains an induced subgraph isomorphic to the graph $\Gamma_{i,j,k}$ for some $i\ge 1$, $j\ge 2$, $k\ge 3$. 
As shown above, the cases $i = 1$ and $i\ge 2$ correspond to the cases when $G$ contains a long unichord and an induced long theta, respectively.
\end{proof}

By~\Cref{thm:forbidden-subgraphs-subdivision-of-house}, we can determine whether a given graph contains an induced subdivision of the house by testing if it has a long unichord or an induced long theta.
The former problem has already been solved in the literature, as follows.

\begin{theorem}[Trotignon and Pham~\cite{trotignon2018chi}]\label{thm:recognition-long-unichord}
Deciding whether a given graph $G$ has a long unichord can be performed in time $\mathcal{O}(n^4m^2)$.
\end{theorem}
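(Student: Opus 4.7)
The plan is to reformulate long-unichord detection as the following graph-theoretic search: for each candidate edge $ab \in E(G)$, determine whether $G$ contains an induced cycle $C$ of length at least $5$ passing through $a$ and $b$ such that $ab \notin E(C)$ and $ab$ is the unique chord of $C$ in $G$. Equivalently, we want two internally vertex-disjoint $a$-$b$ paths $P_1$ and $P_2$ in $G - ab$, each of length at least $2$ and of total length at least $5$, such that the subgraph of $G$ induced by $V(P_1) \cup V(P_2)$ is precisely $P_1 \cup P_2$ together with the edge $ab$.

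To attack this, I would iterate over the $\mathcal{O}(m)$ choices of $ab$ and, for each such edge, over the $\mathcal{O}(n^4)$ choices of the four ``contact vertices'' $c_1, c_2 \in N(a) \setminus \{b\}$ and $d_1, d_2 \in N(b) \setminus \{a\}$ playing the role of the neighbors of $a$ and $b$ on the sought cycle. In the generic case where $c_1 \neq c_2$, $d_1 \neq d_2$, and all four are distinct, after deleting the vertices of $(N[a] \cup N[b]) \setminus \{c_1,c_2,d_1,d_2\}$ from $G$, one must decide whether the remaining graph contains two internally vertex-disjoint induced paths joining $c_1$ to $d_1$ and $c_2$ to $d_2$ respectively, with no edges between their interiors. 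Degenerate cases (for instance a cycle of length~$5$, where one path reduces to a single common neighbor $c_1 = d_1$ of $a$ and $b$) are easy and can be handled by direct enumeration in $\mathcal{O}(n^5)$ time.

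The main obstacle is the disjoint-induced-paths search itself, since detecting two induced vertex-disjoint paths with prescribed endpoints is $\NP$-hard in full generality. However, because we do not fix the paths but only ask for their existence, one can relax the problem and either appeal to the three-in-a-tree machinery of Chudnovsky and Seymour (mentioned earlier in the introduction) or design a dedicated induced-path search in a carefully constructed auxiliary graph, marking suitable vertices so that an induced tree on the marks forces the desired pair of paths. I expect that the $\mathcal{O}(n^4 m^2)$ budget corresponds to $\mathcal{O}(m)$ edges $\times$ $\mathcal{O}(n^4)$ tuples $\times$ one invocation per tuple of an induced-subgraph detection routine running in time $\mathcal{O}(m)$ or $\mathcal{O}(m^2)$. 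The hardest part of the proof will be verifying that the reduction correctly rules out ``false positives'' caused by chords or shortcuts that are invisible to the auxiliary graph; this typically requires a careful structural analysis of how an extra edge between $V(P_1)$ and $V(P_2)$ would manifest itself in the detection subroutine.
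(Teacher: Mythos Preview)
The paper does not prove this theorem at all: it is quoted verbatim as a result of Trotignon and Pham~\cite{trotignon2018chi} and used as a black box inside the recognition algorithm for induced subdivisions of the house. There is therefore no ``paper's own proof'' to compare your proposal against.

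That said, your sketch has a real gap at its core. After guessing the edge $ab$ and the four contact vertices $c_1,c_2,d_1,d_2$, you are left with the task of deciding whether the cleaned-up graph contains two internally vertex-disjoint induced paths from $c_1$ to $d_1$ and from $c_2$ to $d_2$ with no edges between them. You correctly observe that this is \NP-hard in general, but then you do not resolve the difficulty: the three-in-a-tree algorithm looks for a single induced tree containing three prescribed vertices, not for two anticomplete induced paths with four prescribed endpoints, and there is no straightforward reduction from the latter to the former. Saying that one can ``design a dedicated induced-path search in a carefully constructed auxiliary graph'' is precisely the missing idea, not a step in the argument. Without an actual polynomial-time subroutine for this subproblem (or a proof that the instances arising here are structurally restricted enough to make it tractable), the proposed algorithm has no established running time, let alone $\mathcal{O}(n^4m^2)$.

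For context, the approach in~\cite{trotignon2018chi} is not a direct enumeration of this kind; it goes through a structural decomposition theorem for long-unichord-free graphs and checks the decomposition, which is how the rather specific $\mathcal{O}(n^4m^2)$ bound arises.
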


In order to test for an induced long theta, we modify an algorithm by Chudnovsky and Seymour~\cite{chudnovsky2010three} to determine whether a given graph $G$ contains an induced theta.
The key ingredient in the proof is an efficient solution to the \emph{three-in-a-tree problem}, which takes as input a graph $G$ and a set $X\subseteq V(G)$ with $|X| = 3$, and the task is to determine whether $G$ contains an induced subgraph $T$ such that $T$ is a tree and $X\subseteq V(T)$.
Chudnovsky and Seymour gave an algorithm for the three-in-a-tree problem running in time $\mathcal{O}(mn^2)$ (see~\cite{chudnovsky2010three}).
This time complexity was significantly improved by Lai, Lu, and Thorup~\cite{DBLP:conf/stoc/LaiLT20}, who gave an algorithm running in time $\mathcal{O}(m\log^2 n)$.

\begin{proposition}\label{thm:recognition-long-theta}
Deciding whether a given graph $G$ contains an induced long theta can be performed in time $\mathcal{O}(n^8m\log^2 n)$.
\end{proposition}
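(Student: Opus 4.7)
The plan is to adapt the three-in-a-tree-based strategy of Chudnovsky and Seymour~\cite{chudnovsky2010three} from theta detection to long-theta detection. Recall that a long theta consists of two nonadjacent branch vertices $a, b$ together with three pairwise internally vertex-disjoint induced paths $P_1, P_2, P_3$ from $a$ to $b$, each of length at least $2$, pairwise anticomplete in their interiors, with at least one $P_i$ of length at least $3$.

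First I would enumerate the boundary of a putative long theta. Specifically, I would iterate over $\mathcal{O}(n^8)$ ordered tuples $(a, b, a_1, a_2, a_3, b_1, b_2, b_3)$ of vertices of $G$, where $a, b$ play the role of the branch vertices and, for each path index $i \in \{1,2,3\}$, $a_i$ and $b_i$ are intended to be the neighbors of $a$ and $b$ on $P_i$; when $P_i$ has length exactly $2$, $a_i$ and $b_i$ coincide. For each tuple I would discard it in $\mathcal{O}(n+m)$ time unless all of the following local conditions hold: $ab \notin E(G)$; $a_1, a_2, a_3$ are three distinct neighbors of $a$ and $b_1, b_2, b_3$ are three distinct neighbors of $b$; for every $i \ne j$ we have $a_ib_j \notin E(G)$, $a_ia_j \notin E(G)$, and $b_ib_j \notin E(G)$; whenever $a_i \ne b_i$ we have $a_i \notin N(b)$ and $b_i \notin N(a)$; and at least one index $i$ satisfies $a_i \ne b_i$, which enforces the long condition.

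For each tuple that passes the local checks, I would construct an auxiliary graph $G^\star$ from $G$ by deleting $\{a,b\} \cup ((N(a) \cup N(b)) \setminus \{a_1,a_2,a_3,b_1,b_2,b_3\})$ and attaching to $G^\star$, for each index $i$ with $a_i \ne b_i$, a new private vertex $c_i$ whose only neighbors are $a_i$ and $b_i$; for each index $i$ with $a_i = b_i$ I would set $c_i = a_i$. I would then invoke the three-in-a-tree algorithm of Lai, Lu, and Thorup~\cite{DBLP:conf/stoc/LaiLT20} on $G^\star$ with terminals $\{c_1, c_2, c_3\}$; this call runs in $\mathcal{O}(m \log^2 n)$ time. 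If it returns an induced tree through the three terminals, I would report that $G$ contains an induced long theta.

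The main obstacle will be to show that this local reduction is exact, that is, that the existence of an induced tree through $\{c_1,c_2,c_3\}$ in $G^\star$ is equivalent to the existence of three internally vertex-disjoint induced $a_i,b_i$-paths in the deleted subgraph whose interiors are pairwise anticomplete. One direction is immediate: an induced long theta in $G$ with the prescribed boundary restricts to such a structure, and together with the attached $c_i$'s it forms an induced tree. For the converse, a minimal three-in-a-tree witness decomposes uniquely into three subpaths meeting at a common internal Steiner point, or degenerates into a single induced path containing the three terminals; the attached pendants $c_i$ force each subpath to run between $a_i$ and $b_i$, and the removal of $(N(a) \cup N(b)) \setminus \{a_1,\ldots,b_3\}$ guarantees that the three interiors are pairwise anticomplete and attach to $a$ and $b$ only at the chosen endpoints, so that reinserting $a$ and $b$ together with the edges $aa_i, bb_i$ produces a genuine induced long theta. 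Multiplying the $\mathcal{O}(n^8)$ enumeration by the $\mathcal{O}(m \log^2 n)$ cost of each three-in-a-tree call yields the claimed $\mathcal{O}(n^8 m \log^2 n)$ running time.
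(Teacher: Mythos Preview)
Your reduction to three-in-a-tree is broken in the forward direction, and the flaw is structural, not a detail to be patched. In a long theta with branch vertices $a,b$ and paths $P_1,P_2,P_3$, the interiors $P_i-\{a,b\}$ are pairwise anticomplete: this is exactly what ``induced'' means here. Hence, once you delete both $a$ and $b$, the three paths land in three distinct connected components of the remaining induced subgraph of the theta. Attaching a pendant $c_i$ to $a_i,b_i$ does not reconnect anything across different $i$'s, so in $G^\star$ the three terminals $c_1,c_2,c_3$ lie in different components of the image of the theta, and no induced tree of $G^\star$ can contain all three using only theta vertices. Concretely, take $G=\Gamma_{2,2,3}$ itself: with the unique correct tuple you get $c_1=a_1=b_1$ and $c_2=a_2=b_2$ isolated in $G^\star$, while $c_3$ sits on a triangle $\{a_3,b_3,c_3\}$; there is simply no tree through $\{c_1,c_2,c_3\}$, and your algorithm wrongly reports ``no long theta''.

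There is also a gap in the converse: a pendant $c_i$ with neighbours $a_i,b_i$ does not force a tree to pass through both $a_i$ and $b_i$; a minimal tree will happily use $c_i$ as a leaf attached to just one of them, so you cannot extract an $a_i,b_i$-path for each $i$ from the tree. The paper avoids both problems by guessing only \emph{one} branch vertex $a$ (together with an adjacent edge $ab$ on the long path and the three ``second'' vertices $v_1,v_2,v_3$, plus their private neighbours $X$); after deleting $a,b$ and cleaning their neighbourhoods, the \emph{other} branch vertex survives and becomes the Steiner point of the sought tree through $v_1,v_2,v_3$. That asymmetry is precisely what makes three-in-a-tree applicable.
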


\begin{sloppypar}
\begin{proof}
Let $G$ be a graph with $n$ vertices and $m$ edges.
Enumerate all five-tuples \hbox{$(a,b,v_1,v_2,v_3)$} of distinct vertices such that vertex $a$ is adjacent to each of the vertices in $\{b,v_1,v_2\}$, vertex $b$ is adjacent to $v_3$, vertices in $\{v_1,v_2,v_3\}$ are pairwise nonadjacent, and vertices in $\{b,v_1,v_2\}$ are pairwise nonadjacent.
For each such five-tuple $(a,b,v_1,v_2,v_3)$, enumerate all subsets $X\subseteq V(G)$ such that $a$ and $b$ have no neighbours in $X$, and $v_1,v_2,v_3$ each have exactly one neighbour in $X$, and each member of $X$ is adjacent to at least one of $v_1,v_2,v_3$ (it follows that $|X|\le 3$).
For each such choice of $X$, let $G'$ be obtained from $G$ by deleting
$a, b$ and all vertices adjacent to one of $a,b$, $v_1,v_2,v_3$ except for the members of $\{v_1,v_2,v_3\}\cup X$ and test whether there is an induced tree in $G'$ containing all of $v_1,v_2,v_3$.
Indeed, we claim that $G$ contains an induced long theta if and only if there is some choice of $a,b,v_1,v_2,v_3$, and $X$ such that $G'$ contains an induced subgraph $T$ such that $T$ is a tree and $\{v_1,v_2,v_3\}\subseteq V(T)$. 
Assuming the claim, we have to run the three-in-a-tree algorithm at most $n^8$ times, and each one takes time $\mathcal{O}(m\log^2 n)$.

It remains to show the claim.
Suppose first that $G$ contains an induced long theta, that is, an induced subgraph $H$ isomorphic to a graph of the form $\Gamma_{i,j,k}$ such that $i,j\ge 2$ and $k\ge 3$.
Let $P^1$, $P^2$, and $P^3$ be the three edge-disjoint paths forming $\Gamma_{i,j,k}$ of lengths $i$, $j$, and $k$, respectively.
Let $a$ be a vertex of degree $3$ in $H$, let $v_1$ and $v_2$ be the neighbors of $a$ in $P^1$ and $P^2$, respectively, let $b$ be the neighbor of $a$ in $P^3$, and let $v_3$ be the neighbor of $b$ in $P^3$ other than $a$. 
For $i\in \{1,2,3\}$, let $x_i$ be the neighbor of $v_i$ in $P^i$ such that $X = \{x_1,x_2,x_3\}$ is disjoint from $\{a,b\}$.
Let $G'$ be the graph obtained from $G$ by deleting $a, b$ and all vertices adjacent to one of $a,b$, $v_1,v_2,v_3$ except for the members of $\{v_1,v_2,v_3\}\cup X$.
Then $T = H-\{a,b\}$ is an induced subgraph of $G'$ such that $T$ is a tree and $\{v_1,v_2,v_3\}\subseteq V(T)$. 

Conversely, suppose that there is some choice of $a,b,v_1,v_2,v_3$, and $X$ in $G$ such that $G'$ contains an induced subgraph $T$ such that $T$ is a tree and $\{v_1,v_2,v_3\}\subseteq V(T)$. 
We may assume that $T$ is an inclusion-minimal induced subtree of $G'$ such that $\{v_1,v_2,v_3\}\subseteq V(T)$. 
By the minimality of $T$, the tree $T$ has at most three leaves, and if it has three leaves, then the leaves are exactly the vertices $v_1$, $v_2$, and $v_3$.
If $T$ has only two leaves, then $T$ is a path from $v_p$ to $v_q$ having $v_r$ as an internal vertex, where $\{p,q,r\} = \{1,2,3\}$.
However, this means that $v_r$ two neighbors in $X$, a contradiction.
Hence, $T$ has exactly three leaves, namely $v_1$, $v_2$, and $v_3$, and, furthermore, the minimality of $T$ implies that $T$ is isomorphic to a subdivision of a claw.
It follows that the subgraph of $G$ induced by $V(T)\cup \{a,b\}$ is a theta; in fact, it is a long theta, since the path between the two vertices of degree three containing $b$ has length at least three.
\end{proof}
\end{sloppypar}

The following result implies \Cref{thm:recognition-house-subdivision}.

\begin{theorem} \label{thm:recognition-house-subdivision-improved}
Determining if a given graph $G$ contains the house as an induced topological minor can be done in time $\mathcal{O}(n^8m\log^2 n)$.
\end{theorem}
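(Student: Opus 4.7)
The plan is a direct combination of the two preceding results. By definition, a graph $G$ contains the house as an induced topological minor if and only if $G$ contains an induced subdivision of the house. Hence I would first invoke \Cref{thm:forbidden-subgraphs-subdivision-of-house}, which reduces the problem to checking whether $G$ contains either a long unichord or an induced long theta.

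Next, I would run the algorithm of Trotignon and Pham (\Cref{thm:recognition-long-unichord}) to test in time $\mathcal{O}(n^4 m^2)$ whether $G$ contains a long unichord, and then apply \Cref{thm:recognition-long-theta} to test in time $\mathcal{O}(n^8 m \log^2 n)$ whether $G$ contains an induced long theta. By \Cref{thm:forbidden-subgraphs-subdivision-of-house}, the graph $G$ contains an induced subdivision of the house precisely when one of these two tests answers positively.

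For the running time, since $m \le n^2$, we have $n^4 m^2 \le n^6 m$, which is dominated by $n^8 m \log^2 n$. Therefore the total running time is $\mathcal{O}(n^8 m \log^2 n)$, as claimed.

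There is no real obstacle here: all the work has been done in \Cref{thm:forbidden-subgraphs-subdivision-of-house}, \Cref{thm:recognition-long-unichord}, and \Cref{thm:recognition-long-theta}; the theorem is simply their combination. The only minor point to record is that ``induced topological minor'' and ``induced subdivision'' are synonymous by definition, so the equivalence from \Cref{thm:forbidden-subgraphs-subdivision-of-house} applies verbatim.
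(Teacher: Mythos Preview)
Your proposal is correct and matches the paper's own proof, which simply states that the result follows from \Cref{thm:forbidden-subgraphs-subdivision-of-house,thm:recognition-long-theta,thm:recognition-long-unichord}. You have merely spelled out the running-time comparison in slightly more detail than the paper does.
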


\begin{proof}
The result follows from~\cref{thm:forbidden-subgraphs-subdivision-of-house,thm:recognition-long-theta,thm:recognition-long-unichord}. 
\end{proof}

\subsection{Proof of~\cref{thm:recognition-house-minor}}
\label{sec:recognition-house-minor}

In this section we show that detecting if a graph contains the house as an induced minor can be done in polynomial time.
To reduce this problem to the induced topological minor case, we show (in \Cref{thm:characteriation-him-free}) that the existence of the house as the induced minor in a graph $G$ is equivalent to the existence of the house as an induced topological minor or an induced long twin wheel in $G$, that is, a graph obtained from a cycle of length at least five by replacing a vertex with a pair of adjacent vertices with the same closed neighborhoods.

First we show a useful result about graphs that do not contain the house as an induced topological minor (or, equivalently, graphs that do not contain any induced subdivision of the house). 
Given a graph $G$, a subgraph $H$ of $G$, and a vertex $v\in V(G)\setminus V(H)$, we say that $v$ is a \emph{pendant} of $H$ if $v$ is adjacent to a single vertex of $H$. 

\begin{lemma}\label{nbh-3cases} 
  Let $G$ be a graph that does not contain any induced subdivision of the house, $H$ be a hole in $G$ and $v\in V(G)\setminus V(H)$ be a vertex with a neighbor in $H$.
  Then one of the following is true: $v$ is pendant of $H$, or the neighbors of $v$ in $H$ are exactly three vertices that are consecutive, or $v$ is universal for $H$. 
  \end{lemma}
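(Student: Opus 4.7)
The plan is to use the characterization from \cref{thm:forbidden-subgraphs-subdivision-of-house}, which asserts that a graph has no induced subdivision of the house if and only if it is simultaneously long-unichord-free and long-theta-free. Accordingly, to prove the lemma, it suffices to show that every configuration of $N := N(v) \cap V(H)$ other than (i) a single vertex, (ii) three consecutive vertices of $H$, or (iii) all of $V(H)$, necessarily produces either a long unichord or a long theta in $G$. Write $H = u_0 u_1 \cdots u_{\ell-1}$ with indices taken cyclically and let $k := |N|$. The case $k = 1$ is immediate.

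For $k \geq 2$ with $v$ not universal I would split by whether $N$ forms a single ``arc'' of consecutive vertices on $H$. For the single-arc subcases: if $k = 2$ with $N = \{u_i, u_{i+1}\}$, then the cycle $v \to u_i \to (\text{long }H\text{-arc}) \to u_{i+1} \to v$ has length $\ell+1 \geq 5$ and its unique chord is $u_i u_{i+1}$, a long unichord; if $k \geq 4$ with $N = \{u_0, \ldots, u_{k-1}\}$ and $k < \ell$, then I would consider the induced subgraph on $\{v, u_0, u_1, u_{k-1}\} \cup \{u_k, \ldots, u_{\ell-1}\}$ obtained by deleting the interior neighbors $u_2, \ldots, u_{k-2}$; a direct edge count shows it to be precisely the theta $\Gamma_{1, 2, \ell-k+2}$ with $\ell - k + 2 \geq 3$, i.e.\ an induced subdivision of the house. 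This leaves $k = 3$ consecutive (the permitted case) and $k = \ell$ (universal).

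For the multi-arc subcases ($N$ not a single arc), I would pick $u_i, u_j \in N$ as endpoints of two cyclically adjacent arcs of $N$, so that one $H$-arc between $u_i$ and $u_j$ has length $d \geq 2$ and contains no other $N$-vertex. When $k \geq 3$ I would choose a third neighbor $u_k \in N$ avoiding adjacency in $H$ with $u_i$, and form the cycle $v \to u_i \to (\text{gap arc of length }d) \to u_j \to (\text{arc to }u_k) \to u_k \to v$; this cycle has length $\geq 5$, and the careful choice of $u_k$ ensures that its only chord is $vu_j$, giving a long unichord. When $k = 2$ with non-consecutive neighbors, the induced subgraph on $\{v\} \cup V(H)$ is $\Gamma_{2, d, \ell-d}$, which is a long theta whenever $\max(d, \ell - d) \geq 3$, i.e.\ whenever $\ell \geq 5$.

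The main technical obstacle is the careful verification in the multi-arc case that the third neighbor $u_k$ can be chosen to make $vu_j$ the \emph{unique} chord of the constructed cycle---in the excluded ``three consecutive'' configuration this choice is forced to put $u_k$ adjacent in $H$ to $u_i$, creating a second chord $u_i u_k$ and blocking this argument, which is exactly why that configuration is permitted. The only remaining boundary configuration is $\ell = 4$ with $k = 2$ and the two neighbors opposite on $C_4$, where $G[\{v\} \cup V(H)] \cong K_{2,3}$ and neither a long unichord nor a long theta appears; I expect this corner to be handled implicitly via the context in which the lemma is applied (e.g., to holes of length at least~$5$, as arise in the subsequent \cref{thm:characteriation-him-free}).
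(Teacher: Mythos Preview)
Your reduction via \cref{thm:forbidden-subgraphs-subdivision-of-house} to long unichords and long thetas is a legitimate reformulation but not really a different method: an induced subdivision of the house is precisely an induced $\Gamma_{i,j,k}$ with $i\ge1$, $j\ge2$, $k\ge3$, and the paper's proof simply exhibits such a graph directly in each bad configuration without naming the $i=1$ and $i\ge2$ sub-cases. Where the two arguments genuinely diverge is in the handling of $|N|\ge3$ with $v$ not universal. The paper does not split by whether $N$ is a single arc; it fixes a neighbour $v_1$ that is adjacent along $H$ to a non-neighbour $v_k$, chooses two further neighbours $v_i,v_j$ with $1<i<j<k$ and no neighbour of $v$ strictly between them, and argues that $v$ together with the $v_1,v_j$-path in $H-v_k$ already yields an induced subdivision of the house unless $(i,j)=(2,3)$; one more sweep then rules out neighbours among $v_4,\ldots,v_{k-1}$. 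This sidesteps your multi-arc cycle construction, which you rightly flag as the ``main technical obstacle'': the arc you traverse from $u_j$ to your third neighbour may contain further vertices of $N$, creating extra chords from $v$, and you have not shown how to choose that third neighbour so as to avoid this.

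On the $C_4$ corner you have identified a genuine defect---not only in your argument but in the lemma as stated. Take $G=K_{2,3}$ with parts $\{a,b\}$ and $\{x,y,z\}$: then $G$ has no induced subdivision of the house (five vertices, no triangle), $H=(a,x,b,y)$ is a $4$-hole, and $v=z$ has exactly the two opposite vertices $a,b$ as neighbours in $H$, satisfying none of the three alternatives. The paper's proof shares the gap: its assertion that ``if $v$ has exactly two neighbours in $H$ then $V(H)\cup\{v\}$ induce a subdivision of the house'' produces $\Gamma_{2,2,2}=K_{2,3}$ in this configuration, which is not a subdivision of $\Gamma_{1,2,3}$. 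So do not dismiss this as ``handled implicitly by context''---the lemma needs either an extra alternative (two opposite neighbours on a $4$-hole) or the hypothesis $|V(H)|\ge5$, and the applications in \cref{thm:characteriation-him-free} must then be re-examined against whichever repair is adopted.
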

  \begin{proof}
      Let $G$, $H$, $v$ be as stated and let $v_1,\ldots, v_k$ be vertices of $H$ in a cyclic order. 
      If $v$ has only one neighbor in $H$, or is universal for $H$, we are done, so we may assume that $v$ has at least two neighbors in $H$ and at least one non-neighbor in $H$. 
      If $v$ has exactly two neighbors in $H$, then $V(H)\cup \{v\}$ induce a subdivision of the house in $G$. 
      It follows that $v$ has at least three neighbors in $H$. 
      Without loss of generality we may assume that $vv_1\in E(G)$ and $vv_k\notin E(G)$. 
      Let $v_i$ and $v_j$ be neighbors of $v$ in $H$ such that $1<i<j<k$ and no vertex $v_{\ell}$ with $i<\ell<j$ is adjacent to $v$.
      If $v_1,v_i,v_j$ are not consecutive, then $v$ and the vertices of the $v_1,v_j$-path in $H-v_k$ induce a subdivision of the house in $G$; a contradiction.
      Hence, $v_1,v_i,v_j$ are consecutive in $H$, that is, $i=2$ and $j=3$. 
      If there is a neighbor of $v$ in $G$ among the vertices $\{v_4,\ldots, v_{k-1}\}$, let $v_{\ell}$ be the one with the largest index. 
      Then $v$ and the vertices of the $v_2,v_\ell$-path in $H-v_3$ induce a subdivision of the house in $G$; a contradiction.
      It follows that such a vertex $v_\ell$ cannot exist and $v$ has exactly three consecutive neighbors in $H$.     
  \end{proof}
  
A \emph{long twin wheel} is a graph consisting of a hole $H$ of length at least $5$ and of another vertex $v$, called the \emph{center}, such that $v$ has degree three and the neighborhood of the center induces a connected graph in $H$.

\begin{theorem}\label{thm:characteriation-him-free}
Let $G$ be a graph. 
Then the following statements are equivalent.
\begin{enumerate}
\item\label{him-free} $G$ does not contain the house as an induced minor.
\item\label{his-free-tw-free} $G$ does not contain an induced subdivision of the house and does not contain an induced long  twin wheel. 
\end{enumerate}
\end{theorem}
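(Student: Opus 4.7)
The plan is to prove the two directions separately.

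For the easy direction $(\ref{him-free}) \Rightarrow (\ref{his-free-tw-free})$, we argue the contrapositive. Any induced subdivision of the house is, in particular, an induced minor model of the house, so it remains to show that an induced long twin wheel also contains the house as an induced minor. Given a long twin wheel $W$ with hole $v_0 v_1 \ldots v_{k-1}$ of length $k \ge 5$ and center $v$ adjacent to the three consecutive vertices $v_0, v_1, v_2$, we exhibit the induced minor model $X_a = \{v_0, v\}$, $X_b = \{v_{k-1}\}$, $X_c = \{v_3, \ldots, v_{k-2}\}$, $X_d = \{v_2\}$, $X_e = \{v_1\}$. The verification is routine; the delicate points are that the edge $ad$ is realized through $v v_2$ (since $v_0 v_2$ is not an edge), and that $X_c$ is anticomplete to $X_a \cup X_e$ because the only non-hole incidences of $v$ are with $v_0, v_1, v_2$.

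For the main direction $(\ref{his-free-tw-free}) \Rightarrow (\ref{him-free})$, we assume $G$ contains the house as an induced minor and produce either an induced subdivision of the house or an induced long twin wheel. The two $a,d$-walks $(a,b,c,d)$ and $(a,e,d)$ in the house are edge-disjoint thin walks whose internal vertices are exactly $\{b, c, e\}$, so \Cref{prop:simple-induced-minor-models-of-all-paths} provides an induced minor model $\{X_v\}$ with $|X_b| = |X_c| = |X_e| = 1$; we identify these sets with vertices $b, c, e$ of $G$. Among all such models, choose one minimizing $|X_a| + |X_d|$. If $|X_a| = |X_d| = 1$, the induced minor model constraints pin the induced subgraph $G[\bigcup_v X_v]$ to be exactly the house itself, an induced subdivision of the house. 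Otherwise, using the automorphism of the house swapping $a \leftrightarrow d$ and $b \leftrightarrow c$, we may assume $|X_a| \ge 2$.

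The minimality of the model implies that every vertex of $X_a$ is irreplaceable: it is either a cut vertex of $G[X_a]$ or the unique neighbor in $X_a$ of at least one of $b$, $e$, and $X_d$, and symmetrically for $X_d$. A case analysis on how the three \emph{contact roles} (being the neighbor of $b$, of $e$, or of $X_d$) are distributed among the vertices of $X_a$ (and $X_d$) shows that in every configuration but one, there exist two vertices of degree $\ge 3$ in the subgraph $G[\bigcup_v X_v]$ (possibly after deleting a small number of ``redundant'' vertices from $X_a$ or $X_d$) joined by three internally disjoint induced paths of lengths $i \ge 1$, $j \ge 2$, $k \ge 3$. This produces an induced $\Gamma_{i,j,k}$, which by \Cref{thm:forbidden-subgraphs-subdivision-of-house} is an induced subdivision of the house. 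The exceptional configuration is $|X_a| = 2$, $|X_d| = 1$ (or its symmetric variant), with $X_a = \{x_1, x_2\}$, $x_1 x_2 \in E(G)$, $x_1$ the unique $b$-neighbor in $X_a$, $x_2$ the unique $X_d$-neighbor in $X_a$, and both $x_1, x_2$ adjacent to $e$. Writing $X_d = \{d_1\}$, one verifies that $(x_1, b, c, d_1, e)$ is an induced $5$-cycle on which $x_2$ has exactly the three consecutive neighbors $x_1, e, d_1$ (and no other neighbors in the subgraph), yielding an induced long twin wheel.

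The main obstacle is carrying out the case analysis when $|X_a| \ge 3$ or when both $|X_a|, |X_d| \ge 2$, since the induced subgraph on $\bigcup_v X_v$ may contain ``extra'' edges beyond those forced by the induced minor model (for instance, multiple edges from $e$ into $X_a$, or additional edges between $X_a$ and $X_d$) that create chords in a candidate $\Gamma_{i,j,k}$. The key leverage is that minimality of $|X_a| + |X_d|$ forbids certain configurations of extra edges---any such chord either allows us to remove a vertex from $X_a$ or $X_d$ while preserving validity (contradicting minimality) or identifies an explicit redundant vertex whose removal yields the desired induced subdivision of the house.
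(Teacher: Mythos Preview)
Your easy direction is fine and matches the paper's argument.

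The hard direction has a genuine gap. Your claim that the long twin wheel arises from a \emph{single} exceptional configuration with $|X_a|=2$, $|X_d|=1$ (up to the $a\leftrightarrow d$ symmetry) is false. Take $G$ to be the long twin wheel on a hole $v_0v_1\cdots v_{k-1}$ of length $k\ge 6$ with center $v$ adjacent to $v_0,v_1,v_2$. This graph contains the house as an induced minor but contains \emph{no} induced subdivision of the house (one checks that no $\Gamma_{i,j,k}$ with $k\ge 3$ sits inside it: the only hole is the $k$-cycle, and $v$ attaches to three consecutive vertices of it, never yielding a $\Gamma$). Moreover, $G$ contains no induced house, and in fact no induced minor model with $|X_a|+|X_d|\le 3$ exists for $k\ge 6$; the minimum is $|X_a|+|X_d|=k-2$, realised for instance by $X_a=\{v_0,v\}$, $X_e=\{v_1\}$, $X_d=\{v_2,\dots,v_{k-3}\}$, $X_c=\{v_{k-2}\}$, $X_b=\{v_{k-1}\}$. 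So as $k$ grows you get infinitely many distinct ``exceptional'' minimal models, none of which fit your $|X_a|=2$, $|X_d|=1$ template, and in none of them does your proposed $\Gamma_{i,j,k}$ extraction succeed (the extra edge $v v_1$ is a chord in every candidate theta). Your hand-wave that ``minimality of $|X_a|+|X_d|$ forbids certain configurations of extra edges'' does not cover this; minimality here simply reflects that the long twin wheel is irreducible, not that chords disappear.

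The paper avoids this entirely by a different minimisation. Instead of minimising $|X_a|+|X_d|$, it first extracts from the model a concrete hole $H$ in $G$ (the edge $bc$ together with an induced $b,c$-path through $X_a\cup X_d$) and two shortest paths $P^A,P^B$ from $e$ to $H$ with interiors in $X_a$ and $X_d$ respectively, and minimises $|E(H)|+|E(P^A)|+|E(P^B)|$. The crucial tool is then \Cref{nbh-3cases}: in a graph with no induced subdivision of the house, any vertex with a neighbour on a hole is either a pendant, universal, or has exactly three consecutive neighbours. Applying this lemma to the endpoints of $P^A$ and $P^B$ on $H$ is what cleanly isolates the long twin wheel outcome (three consecutive neighbours, hole of length $\ge 5$) from the $\Gamma_{i,j,k}$ outcome, uniformly over all hole lengths. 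Your bag-size minimisation gives no analogue of this attachment control, which is why your case analysis does not close.
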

\begin{proof}

First we prove that $\eqref{him-free}$ implies $\eqref{his-free-tw-free}$.
Let $G$ be a graph that does not contain the house as an induced minor.
Since an induced topological minor is a particular case of an induced minor, it follows that $G$ does not contain an induced subdivision of the house.
Suppose for a contradiction that $G$ contains a long twin wheel $H$ as an induced subgraph.
Let $v_0,v_1,\ldots, v_k$ be vertices of the hole in $H$, in a cyclic order, and let $v$ be the center of $H$. 
Up to symmetry, we may assume that $v_0,v_1,v_2$ are the neighbors of $v$ in $H$. 
Contracting all the edges of the hole in $H$, except those incident with $v_k$ and $v_2$ gives a house in $G$, a contradiction with the assumption that $G$ is house-induced-minor-free. 

Now we prove that $\eqref{his-free-tw-free}$ implies $\eqref{him-free}$.
Let $G$ be a graph that does not contain an induced subdivision of the house and does not contain an induced long twin wheel.
Let $H$ be a house obtained from the $5$-cycle with vertices $p,q,r,s,t$ in cyclic order by adding to it the chord $qt$ and assume that $G$ contains $H$ as an induced minor. 
Let $\{X_v\}_{v\in V(H)}$ be an induced minor model of $H$ in $G$. 
By \Cref{prop:simple-induced-minor-models-of-all-paths}, we may assume that the sets $X_s$, $X_r$ and  $X_p$ are all singletons and let $X_s=\{a\}$, $X_r=\{b\}$ and $X_p=\{c\}$. 
For simplicity, we denote sets $X_t$ and $X_q$ by $A$ and $B$, respectively.
Then $G$ contains two connected subsets $A$ and $B$ and vertices $a, b,c \in V(G)\setminus (A\cup B)$ such that $a$ and $b$ are adjacent, $a$ has a neighbor in $A$ but not in $B$, $b$ has a neighbor in $B$ but not in $A$, $c$ has a neighbor in both $A$ and $B$ but is not adjacent to any of $a$ and $b$, and there is an edge between $A$ and $B$. 

By assumption, there exists an edge in $G$ with endpoints in $A$ and $B$, so the subgraph of $G$ obtained from $G[A\cup B\cup \{a,b\}]$ by deletion of the edge $ab$ is connected. 
Hence, there is an induced $a,b$-path in $G[A\cup B\cup \{a,b\}]-ab$.
Note that every such path is of length at least~$3$. 
Let $\mathcal{P}$ be the set of all such induced $a,b$-paths in $G[A\cup B\cup \{a,b\}]-ab$.
Given $P\in \mathcal{P}$, let $H^P$ be the hole consisting of the edge $ab$ and of the path $P$,  let $P^A$ be a shortest path from $c$ to the hole $H^P$ such that all vertices of $P^A$ other than $c$ belong to $A$, and let $P^B$ be a shortest path from $c$ to the hole $H^P$ such that all vertices of $P^B$ other than $c$ belong to $B$. 
Finally, let $P\in \mathcal{P}$ be a path that minimizes the number $|E(H^P)|+|E(P^A)|+|E(P^B)|$.
In the rest of the proof we simply denote by $H$ the hole $H^P$.

Let $c_A$ and $c_B$ be vertices on $P^A$ and $P^B$, respectively, that have neighbors on $H$. 

Assume first that both $P^A$ and $P^B$ are of length one.
Then $c=c_A=c_B$ has at least two neighbors in $H$. 
As $G$ is a graph without an induced subdivision of the house, by \Cref{nbh-3cases} it follows that $c$ is either universal for $H$, or has exactly three consecutive neighbors in $H$. 
However, since $c$ is not adjacent to $a$ and $b$, it follows that $c$ has exactly three consecutive neighbors in $H. $
Thus, $H$ has at least five vertices and together with $c$ induces a long twin wheel, a contradiction to the definition of $G$.
Hence, up to symmetry, we may assume that $c\neq c_A$ and thus $P_A$ is of length at least two.

All internal vertices of $P_A$ belong to $A$, so $c_A$ is not adjacent to $b$. 
Then, by \Cref{nbh-3cases}, it follows that $c_A$ either has one neighbor in $H$ or it has exactly three consecutive neighbors in $H$. 
Assume first that $c_A$ has exactly three consecutive neighbors in $H$. 
If $|V(H)|\ge 5$, then $H$ together with $c_A$ induces a long twin wheel, a contradiction. 
This implies that $H$ is a hole on four vertices. 
Let $\{a,b,y,x\}$ be vertices in $H$, in cyclic order. 
Then defining $H'$ to be the hole induced by $\{a,b,y,c_A\}$ implies that the corresponding shortest paths from $c$ to $H'$ are defined as $Q^A=P^A-x$ and $Q^B=P^B$. 
Altogether we have that $|E(H')|+|E(Q^A)|+|E(Q^B)|<|E(H)|+|E(P^A)|+|E(P^B)|$, which contradicts the minimality of $P$. 
Hence, $c_A$ has exactly one neighbor on the hole $H$. 

By symmetry, the similar argumentation can be used to verify that $c_B$ (if not equal to $c$) has exactly one neighbor on $H$. 
Moreover, if $c=c_B$, then by \Cref{nbh-3cases} it follows that $c$ either has three consecutive neighbors on $H$, or $c$ has exactly one neighbor in $H$. 
In the first case it follows that $H$ and $c$ together induce a long twin wheel, a contradiction to definition of $G$. 
Hence, in either case, $c_B$ also has exactly one neighbor on the hole $H.$

If there is some edge $uv$ in $G$ such that $u$ is an internal vertex of the path $P_A$,  $u\neq c_A$, and $v$ belongs to the hole $H$, then the path consisting of the $c,u$-subpath of $P_A$ and of the edge $uv$ can be taken instead of the path $P_A$, contradicting the minimality condition. 
By symmetry, the same holds for $u$ being the internal vertex of the path $P_B$, distinct than $c_B$. 
It follows that the internal vertices of paths $P_A$ and $P_B$, distinct than $c_A$ and $c_B$ have no neighbors in $H$. 

If the sets of internal vertices of $P_A$ and $P_B$ are anticomplete to each other, then $H$ together with $P_A$ and $P_B$ induces a subdivision of the house, so we may assume there is some edge connecting the internal vertices of $P_A$ and $P_B.$
Let $x$ be the neighbor of $c_A$ in $H$, and let $y$ be the neighbor of $c_B$ in $H$. 
Note that $x\neq y$, since $x\in A$, $y\in B$. 
We know that there is an $x,y$-path in $G[V(P_A)\cup V( P_B)].$
Let $Q$ be a shortest such path.
The existence of the edge connecting the internal vertices of $P_A$ and $P_B$ ensures that $c\notin V(Q).$
Let $R$ be the $x,y$-path in the hole $H$ that contains vertices $a$ and $b$, and let $S$ be the $xy$-path in $H-a$.

Clearly, $Q$ is of length at least three, and at least one of the paths $R$ and $S$ is of length at least two, so the vertices of paths $R$, $S$, $Q$ induce an $\Gamma_{i,j,k}$ with $i\ge 1$, $j\ge 2$, $k\ge 3$. 
But then $\Gamma_{i,j,k}$ is an induced subdivision of the house in $G$, a contradiction. 
\end{proof}

The following result implies \cref{thm:recognition-house-minor}.

\begin{theorem}
Determining if a given graph $G$ contains the house as an induced minor can be done in time $\mathcal{O}(n^8m\log^2 n)$.
\end{theorem}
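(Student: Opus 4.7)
By \Cref{thm:characteriation-him-free}, $G$ contains the house as an induced minor if and only if $G$ contains an induced subdivision of the house or an induced long twin wheel. The former test can be carried out in time $\mathcal{O}(n^8 m \log^2 n)$ by \Cref{thm:recognition-house-subdivision-improved}, so the plan reduces to designing a polynomial-time algorithm for detecting an induced long twin wheel.

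For this I would enumerate all $4$-tuples $(v, x, y, z)$ of distinct vertices of $G$ with $vx, vy, vz, xy, yz \in E(G)$ and $xz \notin E(G)$; these capture the local configuration of the center $v$ together with its three consecutive hole-neighbors (with $y$ in the middle) in any induced long twin wheel. For each such tuple, the detection reduces to deciding whether the graph $H := G - (N_G[\{v, y\}] \setminus \{x, z\})$ has an induced $x, z$-path of length at least three, since such a path, together with $v$, $y$, and the enumerated edges, induces a long twin wheel in $G$, and conversely any induced long twin wheel produces such a path for its corresponding choice of $(v, x, y, z)$.

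The key step is to decide, in polynomial time, whether $H$ contains an induced $x, z$-path of length at least three. I plan to use the following criterion: such a path exists in $H$ if and only if there are distinct vertices $x' \in N_H(x) \setminus N_H[z]$ and $z' \in N_H(z) \setminus N_H[x]$ lying in the same connected component of $H - \{x, z\} - (N_H(\{x, z\}) \setminus \{x', z'\})$. For the forward direction, one takes $x'$ and $z'$ to be the neighbors of $x$ and $z$ on a witnessing path; for the reverse direction, concatenating a shortest $x', z'$-path in the restricted graph with the edges $xx'$ and $z'z$ yields an induced $x, z$-path of length at least three. Enumerating $(x', z')$ costs $\mathcal{O}(n^2)$ and each connectivity test $\mathcal{O}(n + m)$, so the long-twin-wheel detection runs in $\mathcal{O}(n^6 m)$ time, which is dominated by the $\mathcal{O}(n^8 m \log^2 n)$ bound from the induced-subdivision test. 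The main subtle point is verifying that the reverse-direction path is truly induced in $H$; once that check is in place, the overall analysis is routine.
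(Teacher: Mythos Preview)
Your proof is correct and follows essentially the same approach as the paper: invoke the characterization (\Cref{thm:characteriation-him-free}), apply \Cref{thm:recognition-house-subdivision-improved} for the induced-subdivision test, and design a polynomial test for induced long twin wheels by enumerating the diamond $\{v,x,y,z\}$ and searching for a suitable $x,z$-path.

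The only difference is in the long-twin-wheel subroutine. The paper observes that, after deleting $(N[v]\cup N[y])\setminus\{x,z\}$, one may simply also delete all common neighbors of $x$ and $z$ and then test whether \emph{any} $x,z$-path remains; in the resulting graph every $x,z$-path necessarily has length at least three and a shortest one is induced, so no further enumeration is needed. This yields an $\mathcal{O}(n^4 m)$ bound for that part. Your route---detecting an induced $x,z$-path of length at least three by additionally enumerating the pair $(x',z')$---is also correct (the reverse direction works exactly as you outline: every vertex of a shortest $x',z'$-path in the restricted graph other than $x'$ lies outside $N_H(x)$, and symmetrically for $z$, so the concatenated path is induced), but costs $\mathcal{O}(n^6 m)$. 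Either way the subroutine is dominated by the $\mathcal{O}(n^8 m\log^2 n)$ term from the induced-subdivision test, so the stated bound is unaffected.
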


\begin{proof}
Let $G=(V,E)$ be a graph.
By \Cref{thm:characteriation-him-free}, in order to test if $G$ is house-induced-minor-free, it suffices to check whether $G$ contains an induced subdivision of the house and whether $G$ contains an induced long twin wheel. 
If any of these conditions is satisfied, then $G$ is not house-induced-minor-free; otherwise, it is.
The former condition can be tested in polynomial time by \Cref{thm:recognition-house-subdivision-improved}.
The latter condition can also be tested in polynomial time, as follows.
It is not difficult to verify that $G$ contains an induced long twin wheel if and only if there exist four vertices $a,b,c,d$ in $G$ such that the subgraph of $G$ induced by $\{a,b,c,d\}$ is isomorphic to the diamond, the vertices $a$ and $d$ are nonadjacent, and there exists an $a,d$-path in the graph obtained from $G$ by deleting from it the vertices in $(N[b]\cup N[c])\setminus \{a,d\}$ as well as all common neighbors of $a$ and $d$.
Checking this condition over all the four-tuples of vertices of $G$ can be done in time $\mathcal{O}(n^4m)$.
\end{proof}

\subsection{Detecting the butterfly as an induced minor}\label{subsec:butterfly}

Determining if a given graph $G$ contains the butterfly as an induced minor can also be done in polynomial time.
This is an immediate consequence of the following characterization of graphs containing the butterfly as an induced minor.

\begin{proposition}[Maël Dumas and Claire Hilaire, personal communication, 2024]\label{prop:butterfly}
Let $G$ be a graph.
Then, $G$ contains the butterfly as an induced minor if and only if there exists a set $X\subseteq V(G)$ inducing a subgraph isomorphic to $2P_2$ and a connected component $C$ of the graph $G-X$ such that every vertex in $X$ has a neighbor in $C$.
\end{proposition}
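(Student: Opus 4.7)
The plan is to prove both directions of the equivalence directly, using the thin-walk machinery from Section 3 for the nontrivial direction and exhibiting an explicit induced minor model for the easy one. I denote the vertices of the butterfly by $\alpha_1, \alpha_2, \beta_1, \beta_2, \gamma$, where $\gamma$ is the degree-$4$ vertex and the two triangles are $\gamma\alpha_1\alpha_2$ and $\gamma\beta_1\beta_2$.

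For the backward direction, suppose $X = \{a_1, a_2, b_1, b_2\} \subseteq V(G)$ induces a $2P_2$ with edges $a_1a_2$ and $b_1b_2$, and let $C$ be a connected component of $G - X$ such that every vertex of $X$ has a neighbor in $C$. I would define $X_{\alpha_i} = \{a_i\}$, $X_{\beta_i} = \{b_i\}$ for $i\in\{1,2\}$, and $X_\gamma = V(C)$. Connectedness of every bag is immediate, and the required adjacencies (the two edges of the triangles and the four edges incident with the center $\gamma$) together with the required non-adjacencies ($a_i b_j$ for all $i,j$) follow directly from the fact that $X$ induces a $2P_2$ in $G$ and that $C$ is a connected component of $G - X$ containing a neighbor of each vertex of $X$. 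Hence $\{X_v\}_{v\in V(\text{butterfly})}$ is an induced minor model of the butterfly in $G$.

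For the forward direction, suppose $G$ contains the butterfly as an induced minor, and consider the two closed walks $W_1 = (\gamma, \alpha_1, \alpha_2, \gamma)$ and $W_2 = (\gamma, \beta_1, \beta_2, \gamma)$ in the butterfly. These are thin walks: all vertices are distinct except the (coinciding) endpoints, and the internal vertices $\alpha_1, \alpha_2, \beta_1, \beta_2$ all have degree $2$ in the butterfly. Moreover, $W_1$ and $W_2$ are edge-disjoint, since $W_1$ uses only the edges of the $\alpha$-triangle and $W_2$ only those of the $\beta$-triangle. Applying \Cref{prop:simple-induced-minor-models-of-all-paths} to $\mathcal{W} = \{W_1, W_2\}$, I obtain an induced minor model $\{X_v\}_{v\in V(\text{butterfly})}$ of the butterfly in $G$ in which $X_{\alpha_i} = \{a_i\}$ and $X_{\beta_i} = \{b_i\}$ are all singletons. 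Setting $X = \{a_1, a_2, b_1, b_2\}$, the model properties give $a_1a_2, b_1b_2 \in E(G)$ and $a_ib_j \notin E(G)$ for all $i,j$, so $X$ induces a $2P_2$; and $X_\gamma \subseteq V(G) \setminus X$ is connected and meets the neighborhood of each vertex of $X$. Taking $C$ to be the connected component of $G - X$ that contains $X_\gamma$ then furnishes the desired component.

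The only subtlety to verify is the applicability of \Cref{prop:simple-induced-minor-models-of-all-paths}, namely the edge-disjointness of the two closed walks and the degree-$2$ condition on their internal vertices; once that is in place, both implications are immediate. I expect no other obstacle beyond this bookkeeping.
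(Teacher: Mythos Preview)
Your proposal is correct and follows essentially the same route as the paper's proof: both directions use the same ideas (an explicit model for the easy direction, and \Cref{prop:simple-induced-minor-models-of-all-paths} to reduce the four outer bags to singletons for the other). You are in fact slightly more careful than the paper in spelling out the two edge-disjoint closed thin walks to which \Cref{prop:simple-induced-minor-models-of-all-paths} is applied.
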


For completeness, we include a short proof of \Cref{prop:butterfly} based on~\cref{prop:simple-induced-minor-models-of-all-paths}.

\begin{proof}
Assume first there exists an set $X\subseteq V(G)$ inducing a subgraph isomorphic to $2P_2$ and a connected component $C$ of the graph $G-X$ such that every vertex in $X$ has a neighbor in $C$.
Deleting all vertices in $V(G)\setminus (X\cup V(C))$ and contracting all edges within $C$ results in the butterfly, showing that $G$ contains the butterfly as an induced minor.

Conversely, assume that the butterfly is an induced minor of $G$.
Fix a graph $H$ isomorphic to the butterfly, let $z$ the vertex of degree $4$ in $H$ and let $U = V(H)\setminus \{z\}$.
Then $U$ is the set of vertices of degree $2$ in $H$.
By \Cref{prop:simple-induced-minor-models-of-all-paths}, there exists an induced minor model $\{X_v\}_{v\in V(H)}$ of $H$ in $G$ such that $|X_u|=1$ for all $u\in U$. 
Let $X = \bigcup_{u\in U}X_u$.
Then, the subgraph of $G$ induced by $X$ is isomorphic to $2P_2$.
Furthermore, since the set $X_z$ induces a connected subgraph of $G-X$, there exists a connected component $C$ of the graph $G-X$ such that $X\subseteq V(C)$.
Since every vertex in $U$ is adjacent to $z$ in $H$, every vertex in $X$ is adjacent in $G$ to a vertex in $X_z$ and hence to a vertex in $C$. 
This completes the proof.
\end{proof}

\begin{theorem}
\label{thm:recognition-butterfly-minor}
Determining if a given graph $G$ contains the butterfly as an
induced minor can be done in time $\mathcal{O}(n^5m)$. 
\end{theorem}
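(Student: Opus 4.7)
The plan is to use the structural characterization provided by \Cref{prop:butterfly} in a brute-force manner, and argue that the implied algorithm fits inside the claimed time bound. By \Cref{prop:butterfly}, $G$ contains the butterfly as an induced minor if and only if there is a $4$-element set $X \subseteq V(G)$ with $G[X] \cong 2P_2$ together with a connected component $C$ of $G-X$ such that every vertex of $X$ has a neighbor in $C$. Thus the problem reduces to checking this existential condition over all such $X$.

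The algorithm iterates over all $\binom{n}{4} = \mathcal{O}(n^4)$ four-element subsets $X$ of $V(G)$. Using an adjacency matrix, testing whether $G[X] \cong 2P_2$ takes $\mathcal{O}(1)$ time per set. Whenever this test succeeds, compute the connected components of $G - X$ by a single BFS/DFS in time $\mathcal{O}(n+m)$, assigning each vertex of $V(G) \setminus X$ an integer component label. Then, for each of the four vertices $v \in X$, scan its adjacency list and collect the set $L(v)$ of component labels appearing among its neighbors in $V(G) \setminus X$. The condition of \Cref{prop:butterfly} is equivalent to $\bigcap_{v \in X} L(v) \neq \emptyset$, which can be tested in time linear in the total size of the adjacency lists of the vertices of $X$, hence in $\mathcal{O}(n+m)$.

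If a set $X$ and a component $C$ satisfying the condition are found, output YES; otherwise, after all sets $X$ have been examined, output NO. Correctness is immediate from \Cref{prop:butterfly}. The total running time is $\mathcal{O}(n^4) \cdot \mathcal{O}(n+m) = \mathcal{O}(n^4 m)$, which is within the claimed $\mathcal{O}(n^5 m)$ bound. There is essentially no real obstacle here, as the nontrivial structural work is already done by \Cref{prop:butterfly}; the only minor care needed is in efficiently testing the ``common component'' condition for the four vertices of $X$, which is handled by the component-labeling step described above.
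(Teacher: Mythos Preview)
Your proposal is correct and follows essentially the same approach as the paper: both invoke \Cref{prop:butterfly} and brute-force over all $4$-subsets $X$, computing the components of $G-X$ and checking whether some component is adjacent to all of $X$. Your component-labeling trick yields the slightly sharper bound $\mathcal{O}(n^4 m)$, whereas the paper's more naive per-component neighborhood check gives $\mathcal{O}(n^5 m)$, but the underlying argument is identical.
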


\begin{proof}
Immediate from \Cref{prop:butterfly}. 
Indeed, given a graph $G = (V,E)$, we check all subsets $X\subseteq V(G)$ with $|X| = 4$. 
For each such subset inducing a $2K_2$, we compute the connected components $C$ of the graph $G-X$, for each of them we compute the neighborhood of $V(C)$ in $G$ and test if $X\subseteq N(V(C))$.
If one such pair $(X,C)$ is found, then $G$ contains the butterfly as an
induced minor, otherwise, it does not.
With a straightforward implementation, the time complexity of the algorithm is $\mathcal{O}(n^5m)$.
\end{proof}

\section{Conclusion}
\label{sec:conclusion}

In this work we characterized tame graph classes defined by a single forbidden induced minor or induced topological minor. 
A natural question for future research would be to obtain analogous characterizations for graph classes excluding a single graph as a minor or topological minor, or, more generally, for larger (but still finite) sets of forbidden structures, as suggested already in~\cite{gajarsky2022taming,gartland2023quasi}.

We also provided some partial answers to \Cref{que:MWIS-poly-for-planar-H,que:recognition-H-induced-minor-free,que:recognition-H-induced-topological-minor-free}.
In particular, \Cref{thm:tame-applications} implies that MWIS is solvable in polynomial time in the class of $H$-induced-minor-free graphs when $H$ is the house.
Besides a further investigation of the above questions, other challenging avenues for future research include the analogue of \Cref{que:MWIS-poly-for-planar-H} for graph classes excluding a single induced topological minor~$H$.
In this case, as \textsc{Independent Set} is \NP-complete for cubic planar graphs (see~\cite{zbMATH01843763}), $H$ would have to be planar and subcubic.

\paragraph{Acknowledgements}

We are grateful to Claire Hilaire and Nicolas Trotignon for useful discussions.
This work is partially supported by the Slovenian Research and Innovation Agency (I0-0035, research program P1-0285 and research projects J1-3001, J1-3002, J1-3003, J1-4008, and J1-4084, and a Young Researchers Grant), and by the research program CogniCom (0013103) at the University of Primorska.

\end{document}